\newtheorem{theorem}{Theorem}[section]
\newtheorem{lemma}[theorem]{Lemma}
\theoremstyle{definition}
\newtheorem{definition}[theorem]{Definition}
\newtheorem*{example}{Example}
\theoremstyle{remark}
\newcommand{\problem}{\@ifstar{\problemStar}{\problemNoStar}}
\newcommand{\problemStar}[1][]{\vspace{2\baselineskip}\refstepcounter{problem}{\noindent\large \bfseries Problem~#1}\\}
\newcommand{\problemNoStar}{\vspace{2\baselineskip}\refstepcounter{problem}{\noindent\large \bfseries Problem~\arabic{set}-\arabic{problem}}\\}
\renewcommand*\env@matrix[1][*\c@MaxMatrixCols c]{%
	\hskip -\arraycolsep
	\let\@ifnextchar\new@ifnextchar
	\array{#1}}
\title{A Convergence Rate for Extended-Source Internal DLA in the Plane} 
\author{David Darrow\thanks{Supported in part by NSF Grant DMS 1500771.}} 
\date{\today}
\newcommand{\unit}[1]{\mathbf{\hat{#1}}}
\newcommand{\eps}{\varepsilon}
\newcommand{\Eps}{\mathcal{E}}
\newcommand{\ol}[1]{{\overline{#1}}}
\newcommand{\op}{\operatorname}
\begin{document}
	
	\maketitle
	\thispagestyle{fancy}
	
	\begin{abstract}
		\noindent\textbf{Abstract.} 
	Internal DLA (IDLA) is an internal aggregation model in which particles perform random walks from the origin, in turn, and stop upon reaching an unoccupied site. Levine and Peres showed that, when particles start instead from fixed multiple-point distributions, the modified IDLA processes have deterministic scaling limits related to a certain obstacle problem. In this paper, we investigate the convergence rate of this ``extended source'' IDLA in the plane to its scaling limit. We show that, if $\delta$ is the lattice size, fluctuations of the IDLA occupied set are at most of order $\delta^{3/5}$ from its scaling limit, with probability at least $1-e^{-1/\delta^{2/5}}$.
	\end{abstract}
	
	
	
	
	
	
	
	
	
	
	
	\tableofcontents

	
	\section{Introduction}
	Internal Diffusion Limited Aggregation (IDLA) is a probabilistic growth process on the integer lattice $\mathbb{Z}^d$, first proposed by Meakin and Deutch \cite{doi:10.1063/1.451129} to model electro-chemical polishing. Namely, IDLA follows the growth of random sets $A(n)$; we set $A(1)=\{0\}$, and $A(n+1)$ is obtained by adding to $A(n)$ the point at which a centered simple random walk exits $A(n)$. With the right scaling, this process resembles a stream of particles from the origin barraging (and thus smoothing) the inner surface of an origin-centered sphere.
	
	In line with its applications in smoothing processes, the overall smoothness of IDLA has been an active area of investigation. Meakin and Deutch first studied this numerically, finding that variations of $A(n)$ from the smooth ball were of magnitude $\log n$ in dimension 2 \cite{doi:10.1063/1.451129}. Significant progress has also been made in proving these properties mathematically. In particular, Lawler, Bramson, and Griffeath \cite{lawler1992} proved that $A(n)$ approaches the ball of radius $\sqrt[d]{n/\omega_d}$---where $\omega_d$ is the volume of the $d$-dimensional unit sphere---almost certainly as $n$ increases. Several groups \cite{lawler1995,asselah:hal-00795850} also found convergence rates for this process. Most recently, Asselah and Gaudillière proved that the fluctuations away from the disk are bounded by $\log^2 n$ in dimension 2 and $\sqrt{\log n}$ in higher dimensions \cite{asselah2013, Asselah_2013}, and Jerison, Levine, and Sheffield independently proved a $\log n$ bound in dimension 2 and $\sqrt{\log n}$ in higher dimensions \cite{10.2307/23072157,jerison2013}. Asselah and Gaudillière also proved \emph{lower} bounds of $\sqrt{\log n}$ on the maximum fluctuations of IDLA \cite{asselah2011lower}, showing that the recently proved results for $d\geq 3$ are tight.
	
	Of considerable interest is the extended-source case of IDLA, wherein particles start from a fixed point distribution rather than all from the origin. This generalizes the applicability of IDLA to a much wider range of surfaces, allowing us to see how different geometries interact with this smoothing process. This question was originally investigated by Diaconis and Fulton \cite{stanford1991growth} in the context of a ``smash sum'' of two domains. Levine and Peres \cite{Levine_2010} reframed this notion as a generalized IDLA, proving deterministic scaling limits for a piecewise constant density $\sigma:\mathbb{R}^d\to\mathbb{Z}_{\geq 0}$ of starting points. It is worth noting, but beyond the scope of this paper, that another model with Poisson particle sources was proposed and studied by Gravner and Quastel \cite{10.2307/2691943}.
	
	In this paper, we investigate the convergence rate of the extended-source IDLA of Levine and Peres to its scaling limit in dimension 2, adapting the techniques of Jerison et al.~\cite{10.2307/23072157}. Under the additional assumptions that the initial mass distribution is ``concentrated'' (see Section \ref{background}) and the deterministic limit of its IDLA flow is smooth, we show that---if $\delta$ is the lattice size---the fluctuations of extended-source IDLA are of order $\delta^{3/5}$ or below, with probability at least $1-e^{-1/\delta^{2/5}}$.
	
	There are several major difficulties in extending the argument of \cite{10.2307/23072157} to a general source setting, and we introduce and apply some new technical tools to solve them. In particular, their proof relies heavily upon a specific formulation of the Poisson kernel, which only applies in the case of the disk. Although we are able to make use of the disk Poisson kernel in the first part of our proof, we replace it halfway through our paper with a more general formula---combining results on the discrete Green's function with a last-exit decomposition (both taken from \cite{lawler2010random}), we obtain an explicit formula for discrete Poisson kernels in general domains. This approach requires relatively fine control of the discrete Green's function in general domains; in fact, we obtain an $L^1$ convergence rate of the discrete Green's function to its continuum limit, of order $\delta^{3/5}$ (see Lemma \ref{harmoniclemma2-2}(c)); we have not seen this result in the literature before, and we imagine it may be useful in studying similar problems.
	
	Finally, we believe that our overall $\delta^{3/5}$ bound on the fluctuations of IDLA is non-optimal, and we discuss possibilities for improvement in Section \ref{conclusion}. However, we will see in a sequel to this paper that our bound is strong enough to prove weak scaling limits of the IDLA fluctuations themselves; indeed, this subsequent result requires a bound of order $\delta^{1/2+\eps}$, for any $\eps>0$. The question of weak scaling limits of the IDLA fluctuations has been investigated in the single-source case by Jerison, Levine, and Sheffield \cite{jerison2014}---more recently, Eli Sadovnik \cite{eli16} has shown scaling limits for extended-source fluctuations integrated against harmonic polynomials. In the sequel, we will seek to generalize Sadovnik's result and apply it to fluctuations ``through time'', to better understand the covariances between fluctuations at different times.
	
	In Sections 2 and 3, we will introduce our main result and provide a background on existing theory needed for our proof. The remaining sections are dedicated to the proof of Theorem \ref{biggun'}. Sections 4 and 5 set up the necessary theory; the former shows that an early point implies a similarly late point, and the latter shows that a late point implies a different, very early point. Section 6 combines these results in an iterative argument, recovering the full theorem.
	\section{Background on extended-source IDLA}\label{background}
We will focus on a specific sort of extended source---a \emph{concentrated mass distribution}---slightly narrowing the definitions introduced in \cite{Levine_2010} in order to capture scaling limits for the partially-completed process. We will give details on various extensions in the final section.

\begin{definition}\label{massdist}
	Let $D_0\subset\mathbb{R}^2$ be a compact, connected domain with smooth boundary, and fix $N\in\mathbb{Z}^{\geq 0}$ and $T_1,...,T_N\in\mathbb{R}^{\geq 0}$. For each $i=1,...,N$ and $s\in[0,T_i]$, suppose $Q^s_i\subset D_0$ satisfies the following properties:
	\begin{enumerate}
		\item $Q_i^s$ is a compact domain with $\op{Vol}(Q_i^s)=s$.
		\item $Q_i^s$ is bounded away from $\partial D_0$---that is, $Q_i^s\subset\subset\op{int}(D_0)$.
		\item $Q_i^{s}\subset Q_i^{s'}$ for $s\leq s'\leq T_i$.
		\item $\partial Q_i^s$ is rectifiable, with arclength bounded independently of $s$.
	\end{enumerate}
	Finally, set $T=\sum_kT_k$, and fix increasing functions $s_i:[0,T]\to[0,T_i]$ satisfying $\sum_ks_k(s)=s$ for all $s\in[0,T]$.
	The \emph{concentrated mass distribution} associated to the data $(D_0,\{T_i\},\{Q^s_i\},\{s_i\})$ is the map $\sigma_s:\mathbb{R}^2\to\mathbb{Z}^{\geq 0}$ defined by
	\[\sigma_s=\mathbf{1}_{D_0}+\sum_{i=1}^{N}\mathbf{1}_{Q^{s_i(s)}_i}.\]
\end{definition}
We can also consider infinite concentrated mass distributions, allowing $s\in[0,\infty)$; we define these by requiring that the restriction $s\in[0,T]$ is a finite concentrated mass distribution, and we suppose we have fixed some such $T\geq 0$.

Geometrically, a concentrated mass distribution is a set of domains $Q_i^{s_i}\subset \subset\op{int}(D_0)$ growing at a rate $s_i(s)$. There are several differences and restrictions of this definition as compared to that in \cite{Levine_2010}, which require comment. Most notably, we define the mass distribution to grow in time, as $s\mapsto\sigma_s$, such that we can study the partially complete process (i.e., that at $s<T$). After discretization, this will correspond to a prescribed order of IDLA sources; this allows us to relate the IDLA process to a smoothly growing deterministic set, to recover uniform bounds (in time) on the IDLA fluctuation, and, in the sequel, to study correlations between fluctuations at different points in space \emph{and} in time.

That the arclength of $\partial Q_i^s$ exists and is bounded uniformly allows us to discretize the process $s\mapsto\sigma_s$ in a natural way. That is, viewing $\sigma_s$ as a multiset and taking any $m\geq 1$ and any time $s_0\in[0,T)$, we can find the ``first'' point in $\frac{1}{m}\mathbb{Z}^2$ added to $\sigma_s$ after the time $s_0$; this procedure will give the overall ordering of IDLA sources. As such, this hypothesis could be weakened, so long as the discretization remained possible.

The requirement that $Q_i^s\subset\subset\op{int}(D_0)$ is necessary for the proof of Lemma \ref{harmoniclemma}(c), which in turn is necessary for the estimate \ref{harmoniclemma2}(c). In short, we make heavy use of discrete Poisson kernels on sets cut out by IDLA, and in particular, their pointwise convergence (away from the pole) to continuous Poisson kernels; this hypothesis guarantees a minimum distance between this pole and the source points, which in turn guarantees a strong rate of convergence of these Poisson kernels at source points.

As we will see, IDLA processes begun from finer and finer discretizations of a concentrated mass distribution approach a smooth, deterministic flow $s\mapsto D_s$, where the set $D_s$ is defined in terms of the Diaconis--Fulton ``smash'' sum, as defined in \cite{Levine_2010}:
\begin{definition}
	If $A,B\subset\frac{1}{m}\mathbb{Z}^2$, we define the discrete smash sum $A\oplus B$ as follows. Let $C_0=A\cup B$, and for each $x_i\in \{x_1,...,x_n\}=A\cap B$, start a simple random walk at $x_i$ and stop it upon exiting $C_{i-1}$. Let $y_i$ be its final position, and define $C_{i}=C_{i-1}\cup\{y_i\}$. Then $A\oplus B:=C_n$ is a random set.
	
	As proven in \cite{Levine_2010}, if we instead take domains $A,B\subset\mathbb{R}^2$, the smash sums $A_m\oplus B_m$ of
	\[A_m:=\frac{1}{m}\mathbb{Z}^2\cap A,\qquad B_m:=\frac{1}{m}\mathbb{Z}^2\cap B\]
	approach a deterministic limit, which we label $A\oplus B$. An example is pictured in Figure \ref{squarefig}.
\end{definition}
\begin{figure}[H]
	\centering
	\begin{tikzpicture}
		\node[anchor=south west,inner sep=0] (image) at (0,0) {\includegraphics[scale=.3]{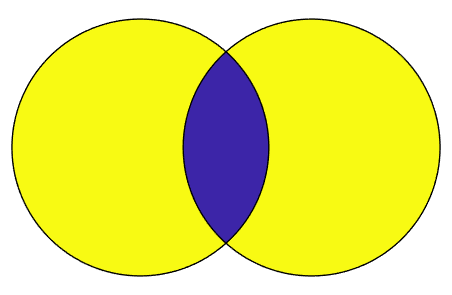}};
		\node[text width=2cm,rotate=0] at (2.3,-.2) {$A\cup B$};
		{}
		\path[ultra thick,->,>=stealth] (3.7,1.2) edge[bend left] (4.9,1.2);
		\node[anchor=south west,inner sep=0] (image) at (5,0) {\includegraphics[scale=.3]{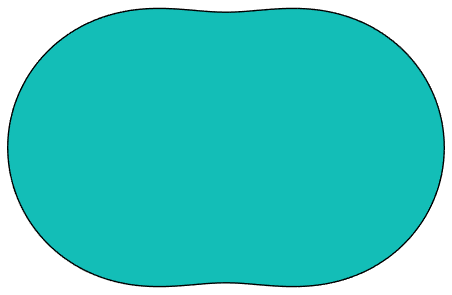}};
		\node[text width=2cm,rotate=0] at (7.3,-.2) {$A\oplus B$};
		{}
	\end{tikzpicture}%
	\caption{The smash sum $A\oplus B$ is the deterministic limit of an IDLA-type growth process starting from the sets $A$ and $B$, representing the dispersal of particles in $A\cap B$ (in dark blue above) to the edges of $A\cup B$ (in yellow above). In our setting, we see that IDLA also converges to an iterated smash sum.}\label{squarefig}
\end{figure}
Given a mass distribution $\sigma_s$ and using the notation of Definition \ref{massdist}, we define the sets $D_s$, $s\in[0,T]$ to be the smash sums
\[D_s=D_0\oplus Q_1^{s_1}\oplus\cdots\oplus Q_N^{s_N}.\]
Importantly, these are \emph{deterministic} sets, depending only on the mass distribution.

Visually, $D_s$ is a smooth outward flow from $D_0$, with $\op{Vol}(D_s)=s+\op{Vol}(D_0)$; we can think of $D_s$ as the result of allowing the mass at $\{Q_i^{s_i}\}$ to diffuse (in the sense of Brownian motion) to---and accumulate at---the edge of $D_0$. These sets satisfy the following key property:
\begin{lemma}\label{quaddomain}
	For $(D_s,\sigma_s)$ arising from a mass distribution, we have
	\[\int_{D_s}h = \int_{\mathbb{R}^2} h\sigma_s\]
	for any harmonic $h:D_s\to\mathbb{R}^2$.
\end{lemma}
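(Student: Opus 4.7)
The plan is to reduce the identity to a single-step quadrature property of the Diaconis--Fulton smash sum and then iterate in $N$. I would induct on $N$, the number of source regions $Q_i$. The base case $N = 0$ is trivial since $\sigma_s = \mathbf{1}_{D_0}$ and $D_s = D_0$. For the inductive step, set $A = D_0 \oplus Q_1^{s_1(s)} \oplus \cdots \oplus Q_{N-1}^{s_{N-1}(s)}$, so that $D_s = A \oplus Q_N^{s_N(s)}$, and apply the inductive hypothesis to $A$ with the reduced mass distribution $\sigma_s - \mathbf{1}_{Q_N^{s_N(s)}}$. This reduces the lemma to the one-step identity
\[\int_{A \oplus Q} h = \int_A h + \int_Q h\]
for bounded planar domains $A, Q$ and any $h$ harmonic on $A \oplus Q$.

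The one-step identity is precisely the assertion that $A \oplus Q$ is a quadrature domain for the density $\mathbf{1}_A + \mathbf{1}_Q$, which is the core structural property of the smash sum established in \cite{Levine_2010}. Their proof uses the obstacle-problem characterization of $\oplus$: the odometer function $u \geq 0$ satisfies $\Delta u = \mathbf{1}_A + \mathbf{1}_Q - \mathbf{1}_{A \oplus Q}$ distributionally, with $u \equiv 0$ outside $A \oplus Q$. Pairing with a harmonic $h$ and integrating by parts twice yields $\int h(\mathbf{1}_A + \mathbf{1}_Q - \mathbf{1}_{A \oplus Q}) = 0$, which rearranges to the desired one-step identity. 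As a sanity check, the identity is transparent in the discrete setting, where it follows from optional stopping applied to the simple random walks defining $A_m \oplus Q_m$, and it passes to the continuous limit via the convergence result of \cite{Levine_2010}.

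The only analytic subtlety, and the step requiring the most care, is justifying the integration by parts at the free boundary $\partial(A \oplus Q)$: one needs the standard $C^{1,1}$-regularity of obstacle-problem solutions and the fact that $\nabla u$ vanishes at the free boundary, so that boundary terms drop out. Both are classical and are already handled in \cite{Levine_2010}. Thus the present lemma amounts essentially to a direct citation together with a short induction on $N$, and I expect no further technical input to be needed.
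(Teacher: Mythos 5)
Your proposal is correct. The paper does not actually prove this lemma---it simply observes that the quadrature-domain property is well-known and cites Sakai (1984)---so you are supplying the argument that the citation is standing in for. Your route (induction on $N$, reduced to the one-step quadrature identity $\int_{A\oplus Q} h = \int_A h + \int_Q h$ for the Diaconis--Fulton smash sum, which in turn follows from the obstacle-problem characterization of $\oplus$ in Levine--Peres and an integration by parts using the $C^{1,1}$-regularity of the odometer) is exactly the standard derivation. One small point worth flagging: the inductive step needs $h$ harmonic on $A = D_0\oplus Q_1^{s_1}\oplus\cdots\oplus Q_{N-1}^{s_{N-1}}$, but this is automatic since $A\subset A\oplus Q_N^{s_N} = D_s$ and $h$ is assumed harmonic on $D_s$. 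So the argument closes cleanly, and no further technical input beyond what you describe is required.
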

This property, which identifies $D_s$ as a \emph{quadrature domain}, is well-known; for instance, see \cite{Sakai1984}.
\begin{example}\label{example1}
	Suppose that we take $D_0=B_1$ to be the unit disk, and we set $Q_i^{s}=B_{\sqrt{s/\pi}}$ for $s\in[0,\pi/4]$ and $i=1,...,N$, where $B_r=B_r(0)$ is generally the origin-centered disk of radius $r$. We further define $s_i(s)=s/N$, so that all sets $Q_i^{s}$ are growing at the same rate. Visually, particles are emanating evenly (with density $N$) from outwardly moving rings of radius $0\leq r\leq 1/2$, as shown in Figure \ref{diskfig}.
	
	Here, $T=N\pi/4$. From symmetry considerations, it is clear that $D_s=B_{\sqrt{1+s/\pi}}$ are outwardly expanding disks, as in the case of a point-source; we omit the proof here, as it is not critical to our results. The property of Lemma \ref{quaddomain} is simply the mean value property in this setting.
\end{example}
\begin{figure}[H]
	\centering
	\begin{tikzpicture}
		\node[anchor=south west,inner sep=0] (image) at (0,0) {\includegraphics[scale=.3]{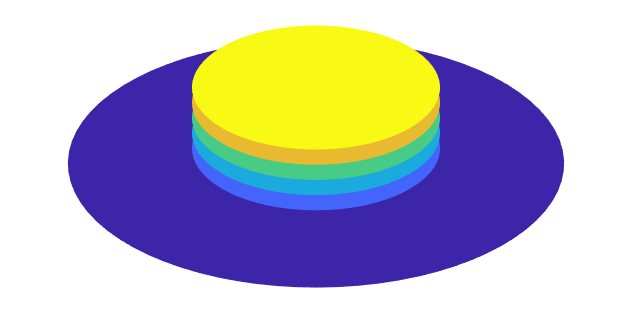}};
		\draw [->,
		line join=round,
		decorate, decoration={
			zigzag,
			segment length=15,
			amplitude=2,post=lineto,
			post length=2pt
		},ultra thick]  (5,1.4) -- (6.5,1.4);
		\node[anchor=south west,inner sep=0] (image) at (7,0) {\includegraphics[scale=.3]{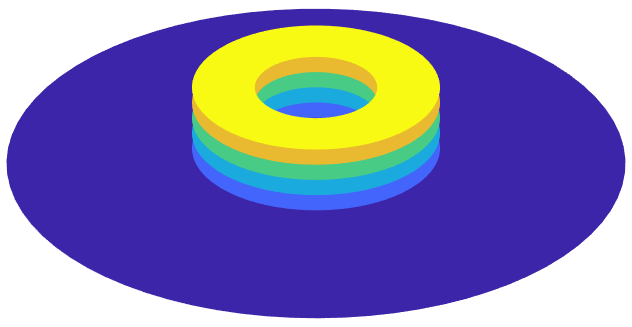}};
	\end{tikzpicture}%
	\caption{An illustration of the occupied set $D_s$ (dark blue) and the remaining source points $Q_i^{T_i}\setminus Q_i^{s_i}$ (multicolored) in Example \ref{example1}, at two different times. Here, our starting set $D_0=B_1$ is the unit disk, and our source points $Q_i^{s}=B_{\sqrt{s/\pi}}$ are identical radially-expanding disks within it (shown in 3D for visual clarity). From symmetry, we see that the occupied sets $D_s$ are also growing disks.}\label{diskfig}
\end{figure}
Next, we restrict attention to \emph{smooth} flows:
\begin{definition}
	The flow $D_s$ is \emph{smooth} if the flow $s\in[0,T]\mapsto D_s$ is a smooth isotopy from $D_{0}$ to $D_{T}$---that is, if the embeddings $\iota_s:D_s\hookrightarrow\mathbb{R}^2$ form a smooth homotopy from $\iota_0$ to $\iota_T$. Note that the disks of Example \ref{example1} form a smooth flow.
\end{definition}

To define discrete processes on a mass distribution, we first need to discretize the distribution itself; fortunately, there is a natural way to discretize any mass distribution. Fix an integer $m$, and note that $f(s)=\sum\nolimits_{\frac{1}{m}\mathbb{Z}^2}\sigma_s$ is an increasing, piecewise constant function of $s$. Let 
\[0= s_{m,0}<s_{m,1}<\cdots< s_{m,N'}=T\]
be a partition of $[0,T]$ with $\max_k(s_{m,k+1}-s_{m,k})=O(m^{-2})$ and such that $f$ is constant near $s_{m,k}$ for $s_{m,k}\neq 0,T$. Define the sequence $S_m=\{z_{m,1},...,z_{m,n_m}\}$ inductively as follows:
\begin{enumerate}
	\item Let $s_{m,n}$ be the smallest $s_{m,i}\geq s_{m,n-1}$ such that 
	\[\sum_{\frac{1}{m}\mathbb{Z}^2}(\sigma_{s_{m,n}}-\sigma_0)-\sum_{i<n} z_{m,i}>0.\]
	\item Choose $z_{m,n}\in\frac{1}{m}\mathbb{Z}^2$ such that $(\sigma_{s_{m,n}}-\sigma_0)(z_{m,n})$ exceeds the number of times $z_{m,n}$ occurs in $\{z_{m,i}\}_{i<n}$.
\end{enumerate}

Intuitively, we allow the sets $Q_i^{s_i}$ to expand a slight (i.e., $O(m^{-2})$) amount, and then we add all new points to the sequence of $z_{m,i}$. It is possible that multiple points may satisfy this condition for a given time---in the limit $m\to\infty$, the order in which these ``nearby'' points appear will not matter.

Given these sequences $S_m$, the (resolution $m$) \emph{internal DLA (IDLA)} associated to the mass distribution is the following process:
\begin{definition}[Internal DLA]
	Suppose we have a concentrated mass distribution with initial set $D_0$ giving rise to the sequences $S_m$. The IDLA $A_m(t)$ associated to the mass distribution is as follows. Define the initial set $A_m(0)=\frac{1}{m}\mathbb{Z}^2\cap D_0$. Then, for each $i\geq 1$, start a random walk at $z_{m,i}$, and let $z'_i$ be the first point in the walk outside the set $A_m(i-1)$---then $A_m(i):=A_m(i-1)\cup\{z'_i\}$.
	
	Importantly, the law of $A_m(i)$ does not depend on the order of $\{z_{m,1},...,z_{m,i}\}$, as proven by Diaconis and Fulton \cite{stanford1991growth}.
\end{definition}

We know from Levine and Peres \cite{Levine_2010} that the sets $A_m(m^2s)$ approach their deterministic limits $D_s$ almost surely. That is, for any $\eps>0$, we know that
\[d_H(\partial A_m(m^2s),\partial D_s)\leq\eps\]
almost surely for sufficiently large $m$, where the boundary $\partial A_m(m^2s)$ denotes the points in $A_m(m^2s)$ adjacent to $A_m(m^2s)^c$. Here and below, we use $d_H$ to denote the Hausdorff distance between sets:
\[d_H(A,B):=\inf\nolimits_{x\in A}\sup\nolimits_{y\in B}d(x,y).\]
In the following sections, we will use this result along with an iterative argument to recover a stronger convergence rate on $A_m(m^2s)$.
	\section{Main result}
We will write $(A)^\eps$ and $(A)_\eps$ for the outer- and inner-$\eps$-neighborhoods of a set $A\subset\mathbb{R}^2$, respectively. That is,
\[(A)^\eps := \{z\in\mathbb{R}^2\;|\;d(z,A)<\eps\},\qquad (A)_\eps := \{z\in A\;|\;d(z,A^c)>\eps\}.\]
Our primary result is the following convergence rate on the IDLA occupied sets $A_m(t)$ to their deterministic scaling limits $D_s$:
\begin{theorem}\label{biggun'}
	Suppose $D_\tau$ is a smooth flow arising from a concentrated mass distribution. For large enough $m$, the fluctuation of the associated IDLA $A_m(t)$ is bounded as
	\[
	\mathbb{P}\bigg\{(D_{s})_{C_5m^{-3/5}}\cap\frac{1}{m}\mathbb{Z}^2\subset A_m(m^2s)\subset (D_{s})^{C_5m^{-3/5}}\;\text{for all}\;s\in[0,T]\bigg\}^c\leq e^{-m^{2/5}}\]
	for a constant $C_5$ depending on the flow. Equivalently,
	\[\mathbb{P}\left[d_H(\partial A_m(m^2s),\partial D_s)>C_5m^{-3/5}\;\text{for any}\;s\in[0,T]\right]\leq e^{-m^{2/5}},\]
	where $d_H$ is the Hausdorff distance.
\end{theorem}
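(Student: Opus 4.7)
The plan is to adapt the early/late point duality of Jerison--Levine--Sheffield \cite{10.2307/23072157} to the extended-source setting. For each time $t=m^2s$ and scale $r>0$, call a lattice site $x$ an \emph{early point} if $x\in(D_{s})_r$ but $x\notin A_m(t)$, and a \emph{late point} if $x\in A_m(t)$ but $x\notin(D_{s})^r$. Theorem \ref{biggun'} reduces to showing that, with probability at least $1-e^{-m^{2/5}}$, neither kind of point exists at scale $r=C_5 m^{-3/5}$, uniformly in $s\in[0,T]$. All estimates will rest on martingale concentration for harmonic test functions evaluated along the aggregation random walks, anchored by the quadrature identity of Lemma \ref{quaddomain}.

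\textbf{Step 1: early implies late (Section 4).} Assume there is an early point of depth $r$ at time $s$. Choose a test harmonic function $h$---a shifted Poisson kernel on a disk enclosing $D_s$, which is still available at the first stage of the bootstrap---that concentrates mass near the alleged early point. Evaluating $h$ along each aggregation random walk produces a martingale up to its first exit from the current IDLA set; summing over all walks issued during $[0,s']$ for a slightly later $s'>s$ and comparing to the quadrature identity $\int_{D_{s'}}h=\int h\,\sigma_{s'}$, any mass ``missing'' from the interior of $D_{s'}$ must reappear outside. A Lindeberg-type concentration bound converts the deterministic imbalance into the existence of a late point of comparable scale, outside an event of exponentially small probability.

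\textbf{Step 2: late implies a much earlier point (Section 5).} This is the amplification step that drives the bootstrap. Given a late point $y$ of scale $r$ at time $s$, I would apply a last-exit decomposition (as in \cite{lawler2010random}) to the random walks that could have landed at $y$: the probability that $y\in A_m(s)$ is expressed as a sum of discrete Poisson kernels based in the random domain $A_m(s'')$, evaluated at source points for $s''<s$. At this point the disk-specific machinery is no longer available, and I would replace it with the general-domain Poisson kernel formula obtained by combining the discrete Green's function with a second last-exit decomposition. The $L^1$ convergence of the discrete Green's function at rate $m^{-3/5}$ (Lemma \ref{harmoniclemma2-2}(c)) yields pointwise convergence away from the pole, and the concentration hypothesis $Q_i^s\subset\subset\op{int}(D_0)$ keeps the sources a uniform distance from any such pole. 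Together these estimates show that a late point of scale $r$ at time $s$ forces, with high probability, the prior existence of an early point at strictly larger scale at some earlier time.

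\textbf{Step 3 and main obstacle (Section 6).} Iterating Steps 1 and 2 would convert any hypothetical fluctuation of scale $>C_5 m^{-3/5}$ into a chain of ever-larger hypothesized fluctuations, which must terminate in a contradiction with the trivial geometric bound once the scale exceeds $\op{diam}(D_T)$. A union bound over this chain and over the $O(m^2T)$ possible times then produces the target failure probability $e^{-m^{2/5}}$. The principal obstacle is Step 2: the Poisson kernel machinery of \cite{10.2307/23072157} is tailored to the disk, while the domains $A_m(s)$ here are highly irregular and vary with $s$. Establishing uniform Poisson kernel control on these random cut-out domains forces the new $L^1$ Green's function estimate, and the rate $3/5$ of that estimate propagates directly into the exponent of the theorem; smoothness of the flow $D_s$ will be needed to prevent the cut-out domains from losing regularity faster than this rate can compensate for.
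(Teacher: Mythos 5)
Your proposal correctly identifies the overall architecture---an early/late duality estimated by martingale concentration, bootstrapped down from a crude a priori scale---but there are two substantive gaps that would derail the argument as written.

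First, you have reversed the attribution of the two test functions. In the paper, the disk-adapted function $H_\zeta$ (a discrete directional derivative of the potential kernel, approximating $F_\zeta(z)=\op{Re}(\unit{n}/m(\zeta-z))$) is the tool for showing that an \emph{early} point forces a \emph{late} point: there one only needs two-sided bounds $|H_\zeta(z)-H_\zeta(z_{m,i})|$ to control the quadratic variation, and the sign of $H_\zeta$ at the sources does not matter. It is the converse direction---a late point forces an early point---that requires the general-domain Poisson kernel $\tilde H_\zeta$ on $\tilde\Omega_\zeta\approx D_\tau$. That is because the argument there needs a \emph{positive lower bound} on $\tilde H_\zeta(z_{m,i})$ at every source point; the disk-like function $H_\zeta$ can be negative at sources lying ``behind'' the pole (this is precisely the obstruction depicted in Figure~\ref{behindfig} and discussed at the start of Section~5), so it cannot play this role. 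Your Step~1, as written, would fail at exactly this point. Relatedly, you propose Poisson kernels on the random domains $A_m(s'')$; the paper keeps all harmonic test functions on \emph{deterministic} domains $D_\tau$ and $\tilde\Omega_\zeta$, which is what makes the grid-Brownian-motion construction a clean martingale with a usable Lemma~\ref{escapetime}. Your early/late labels are also swapped relative to the paper's convention, but since you use them consistently this is only a notational matter.

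Second, the proposed anchor for the bootstrap does not close. You suggest iterating ``up'' until the fluctuation scale exceeds $\op{diam}(D_T)$ and appealing to a trivial geometric bound. But Lemmas~\ref{insideest}/\ref{outsideest} (and the paper's iteration in Section~6) are only valid for $3a+C_2\leq m$, i.e.\ fluctuation scales $a/m\lesssim 1/3$, and the paper in fact caps $\eps\leq\alpha/4$. For generic $D_T$ with $\op{diam}(D_T)>1/3$ the geometric contradiction is simply out of reach of the bootstrap. The paper instead anchors the iteration at the fixed constant scale $\eps$ via the Levine--Peres a priori estimate (Lemma~\ref{levinepereslemma}), which says $\mathbb P(\mathcal L_\eps[m^2s])\leq C_0e^{-c_0m^2/\log m}$---a genuinely nontrivial external input, not a volume count. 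Without it (or some equivalent), the chain of implications has no base case, so the union bound you describe gives nothing. Once both corrections are made, the rest of your Step~3---the geometric-mean iteration $\ell_k=\sqrt{C_4m^{2/5}a_{k-1}}$ converging in $O(\log\log m)$ steps to scale $m^{-3/5}$, with that floor coming from the $m^{2/5}$ error in Lemma~\ref{harmoniclemma2-2}(c)---is indeed the paper's argument.
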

As mentioned in the introduction, we have reason to believe that the $m^{-3/5}$ convergence rate so described is non-optimal. Indeed, we will see in Lemma \ref{harmoniclemma2-2} that this results from a relatively rudimentary $L^1$ bound on the convergence rate of discrete Green's functions, rather than from the geometry of IDLA itself. We will discuss suggestions for further research in Section \ref{conclusion}.

\subsection{Overview of notation}
Henceforth, we will assume we have fixed a smooth, concentrated mass distribution, and we will use the language of Section \ref{background} to refer to it. That is, $T$ will always refer to the total volume of our source sets, $z_{m,i}$ to the $i^{th}$ source point in the resolution-$m$ discretization of our mass distribution, and $D_s$ to the scaling limit of IDLA started on the density $\sigma_s$.

To discuss fluctuations of IDLA away from its scaling limit, we define the following notions of ``earliness'' and ``lateness'':
\begin{itemize}
	\item We say that $z\in \frac{1}{m}\mathbb{Z}^2$ is $\eps$-early if $z\in A_m(\tau m^2)$ but $z\notin (D_\tau)^\eps$, for some $\tau>0$. Let $\Eps_\eps[t]$ be the event that some point in $A_m(t)$ is $\eps$-early.
	\item Similarly, we say that $z$ is $\eps$-late if $z\in (D_\tau)_\eps$ but $z\notin A_m(\tau m^2)$, for some $\tau>0$. Let $\mathcal{L}_\eps[t]$ be the event that some point in $(D_{t/m^2})_\eps$ is $\eps$-late.
\end{itemize}

As introduced in the preceding subsection, we will write $(A)^\eps$ and $(A)_\eps$ for the outer- and inner-$\eps$-neighborhoods of a set $A\subset\mathbb{R}^2$:
\[(A)^\eps := \{z\in\mathbb{R}^2\;|\;d(z,A)<\eps\},\qquad (A)_\eps := \{z\in A\;|\;d(z,A^c)>\eps\}.\]
Finally, for convenience and visual clarity, we will use $m^2s$ [resp., $m^2T$, etc.] in place of $\lfloor m^2s\rfloor$ in places where the meaning is clear. In particular, $A_m(sm^2):=A_m(\lfloor sm^2\rfloor)$.

\subsection{Required lemmas}
A number of existing results are necessary in the proof of Theorem \ref{biggun'}; we collect many of them here.

Firstly, we use the following two estimates on IDLA. The first bounds the probability of so-called ``thin tentacles''---shown in Figure \ref{tentaclefig}---and is simply a transcription of Lemma 2 of \cite{10.2307/23072157} in our setting. The second is a part of the estimate of Levine and Peres \cite{Levine_2010} earlier described, demonstrating that extremely late points are unlikely.
\begin{lemma}[Thin Tentacles]\label{tentacles}
	There are positive absolute constants $b$, $C_0$, and $c_0$ such that for all $z\in\frac{1}{m}\mathbb{Z}^2$ with $d(z,D_{0})\geq r$,
	\[\mathbb{P}[z\in A_m(t), \#(A_m(t)\cap B(z,r))\leq bm^2r^2]\leq C_0e^{-c_0mr}.\]
\end{lemma}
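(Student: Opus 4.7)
The plan is to adapt the proof of Lemma 2 of \cite{10.2307/23072157} to our extended-source setting. The key observation is that the only property of the single-source setting used in the original argument is a uniform lower bound on the distance from $z$ to any potential source. In our case, every source $z_{m,i}$ satisfies $z_{m,i}\in D_0$, since each $Q_i^{s_i}\subset\subset\op{int}(D_0)$, so the hypothesis $d(z,D_0)\geq r$ immediately yields $d(z,z_{m,i})\geq r$ uniformly in $i$.

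With this observation in hand, the outline is as follows. Assume the bad event $\mathcal{B}=\{z\in A_m(t),\ \#(A_m(t)\cap B(z,r))\leq bm^2r^2\}$ holds. Using the Abelian property of Diaconis and Fulton \cite{stanford1991growth} applied to the first $t$ sources, I would reorder the walks so that the walk $W$ terminating at $z$ is run last, atop an aggregate whose intersection with $B(z,r)$ has at most $bm^2r^2$ points. The terminating portion of $W$ inside $B(z,r/2)$ must then trace out a ``tentacle'' path of length at least $mr/2$ from $\partial B(z,r/2)$ to $z$ that avoids the sparse aggregate. Enumerating such paths and weighting by their simple random walk probabilities (at most $3^L$ admissible paths of length $L$, each realized with probability at most $4^{-L}$ per step), the total contribution is bounded by $(3/4)^{mr/2}\cdot\op{poly}(mr)$, which is absorbed into $C_0 e^{-c_0 mr}$ after shrinking $c_0$ slightly; the condition that $b$ is small enough is what prevents the aggregate from itself blocking too many tentacle paths and thereby weakening the enumeration bound.

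The principal subtlety is verifying that the Abelian reordering remains valid at a fixed intermediate time $t$, rather than at the end of the process. This is fine: the Diaconis--Fulton identity applied to the multiset of sources $\{z_{m,1},\ldots,z_{m,t}\}$ shows that the distribution of $A_m(t)$ depends only on this multiset and not on the order, so we may freely reorder within the first $t$ steps without affecting the law of $\mathcal{B}$. Beyond this bookkeeping, the only modification to the proof of \cite{10.2307/23072157} is the substitution of $D_0$ for the single source $\{0\}$ in the role of ``distance $\geq r$ from $z$'', so I expect no further obstacles; the argument is genuinely a direct transcription once this geometric point is secured.
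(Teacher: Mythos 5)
Your core claim---that Lemma 2 of \cite{10.2307/23072157} transfers once one observes that every source $z_{m,i}$ lies in $D_0$ (since each $Q_i^s\subset\subset\op{int}(D_0)$), so that $d(z,D_0)\geq r$ uniformly bounds the distance from $z$ to every source---is correct, and it is exactly the content of the paper's one-line proof, which simply invokes the JLS argument ``verbatim, with our scaling in mind.'' Your note about the Abelian property applying at an intermediate time $t$ is also fine and standard.

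However, the sketch you give of the JLS mechanism is not right in two places, which is worth flagging even though you are entitled to cite the result. First, the walk $W$ that lands at $z$ does not ``avoid'' the sparse aggregate; it is \emph{confined within} the occupied set $A_m(t-1)$ and stops the instant it exits. The entire point is that $A_m(t-1)\cap B(z,r)$ is too sparse to support a long trajectory staying inside it, not that the walk must dodge occupied sites. Second, the proposed path-count ($3^L$ paths of length $L=mr/2$, each of probability $4^{-L}$) does not bound the relevant hitting probability: the walk may backtrack freely and may spend far more than $mr/2$ steps inside the ball before either exiting or reaching $z$, so enumerating length-$L$ non-backtracking paths does not dominate the event. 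The actual argument in \cite{10.2307/23072157} controlling the probability of reaching $z$ across a sparse occupied set is considerably more delicate than a crude path enumeration. Since the paper itself supplies no details beyond the citation, your proposal is not in conflict with it, but the outline as written does not reproduce the JLS argument.
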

\begin{proof}
	The proof can be taken verbatim from Jerison et al.~\cite{10.2307/23072157}, with our scaling in mind.
\end{proof}
\begin{figure}[H]
	\centering
	\begin{tikzpicture}
		\node[anchor=south west,inner sep=0] (image) at (0,0) {\includegraphics[scale=.3]{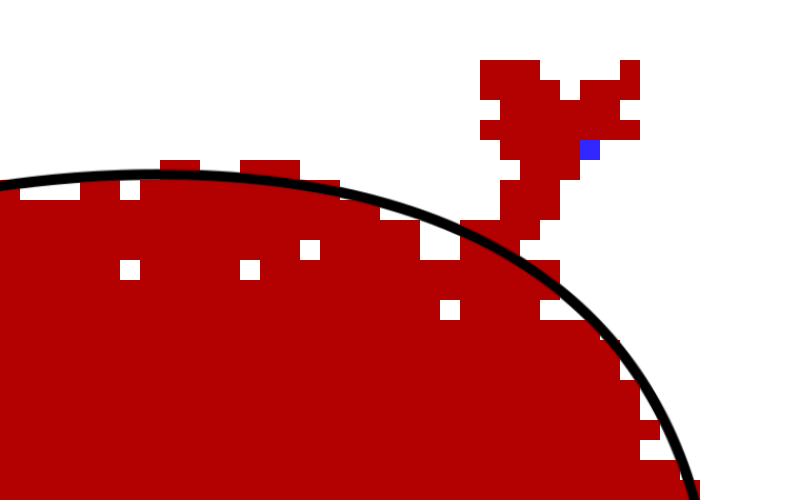}};
		\node[text width=2cm,rotate=0] at (2.3,1.2) {$\mathbf{A_m}$};
		\node[text width=2cm,rotate=0] at (1.2,3.7) {$\mathbf{D_\tau}$};
		\node[text width=2cm,rotate=0] at (7.45,3.55) {$\zeta$};
		\draw [dashed, line width=1pt] (6.24,3.73) circle(1.355cm);
	\end{tikzpicture}
	
	\caption{It is conceivable that the IDLA set $A_m$ extends out from its limit $D_s$ in thin tentacles, as pictured; we quantify this ``thinness'' near a point $\zeta$ by the number of filled spaces in disks centered at $\zeta$. In Lemma \ref{tentacles}, we show that a fixed positive fraction of each such disk (for small enough radii) is very likely filled in.}\label{tentaclefig}
\end{figure}
\begin{lemma}\label{levinepereslemma}
	There are absolute constants $C_0,c_0>0$ such that for all real $\eps>0$, $T\geq\tau\geq0$, and large enough $m$,
	\[\mathbb{P}\left(\mathcal{L}_\eps[\tau m^2]\right)\leq C_0e^{-c_0m^2/\log m}.\]
\end{lemma}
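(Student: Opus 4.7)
The plan is to adapt the inner bound of Levine and Peres's Theorem 1.3 in \cite{Levine_2010} to the partially completed process $A_m(\tau m^2)$. By the construction of the source sequence $S_m$, the intermediate set $A_m(\tau m^2)$ is itself an IDLA associated to a truncated concentrated mass distribution with deterministic scaling limit $D_\tau$, so up to replacing the terminal time $T$ by $\tau$ we are in the setting of their theorem; the abelian property also lets us assume any convenient ordering of the first $\tau m^2$ sources without changing the law of $A_m(\tau m^2)$.

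The argument proceeds by fixing $z \in (D_\tau)_\eps \cap \frac{1}{m}\mathbb{Z}^2$ and proving $\mathbb{P}(z \notin A_m(\tau m^2)) \leq C_0 e^{-c_0 m^2/\log m}$; since there are only $O(m^2)$ such $z$, a union bound absorbs the polynomial factor. Using the abelian property once more, we extend each of the $\tau m^2$ walks to run in a fixed auxiliary domain $\Omega \supset\supset D_T$, and let $N(z)$ count how many of these walks hit $z$ before exiting $\Omega$. Standard IDLA reformulations (of the kind in \cite{10.2307/23072157}) show that $\{z \notin A_m(\tau m^2)\} \subset \{N(z) = 0\}$ up to easily bounded boundary corrections.

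We then compute $\mathbb{E}[N(z)] = \sum_i G_\Omega^m(z_{m,i}, z)$ via the discrete Green's function. Pointwise convergence $G_\Omega^m \to G_\Omega$ off the diagonal (Theorem 4.4.4 of \cite{lawler2010random}) combined with the quadrature identity of Lemma \ref{quaddomain}, applied to the harmonic function $y \mapsto G_\Omega(y, z)$ on $D_\tau \setminus \{z\}$, yields $\mathbb{E}[N(z)] \gtrsim m^2$ after carving out a small disk around $z$ whose contribution is only $O(\log m)$. Since a single walk contributes at most $\sup_y G_\Omega^m(y,z) = O(\log m)$ to $N(z)$, Bernstein's inequality for independent sums gives $\mathbb{P}(N(z) = 0) \leq \exp(-c_0 m^2/\log m)$, with the $\log m$ loss coming precisely from the logarithmic Green's function singularity in two dimensions.

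The main obstacle is handling this diagonal singularity cleanly: both the mean and the variance of $N(z)$ receive logarithmic contributions from sources near $z$, and one must either truncate the per-walk contribution at scale $\log m$ (losing a factor there) or pass to a martingale argument on the IDLA odometer. Since the estimate itself is essentially contained in \cite{Levine_2010}, the real task is to verify that their argument survives restriction to an arbitrary intermediate time; this follows because the quadrature identity of Lemma \ref{quaddomain} holds at every $\tau \in [0,T]$, and because the source-ordering and Green's function estimates used above are uniform in $\tau$.
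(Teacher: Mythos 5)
Your overall structure matches the paper's: bound the probability that a fixed lattice point $z \in (D_\tau)_\eps$ is $\eps$-late, then union bound over the $O(m^2)$ such points, absorbing the polynomial prefactor into the exponential by shrinking $c_0$. The difference is that the paper simply cites Levine and Peres \cite[p.~49]{Levine_2010} for the single-point bound $\mathbb{P}_z \leq 4e^{-cm^2/\log m}$ and stops there, whereas you propose to reconstruct that bound via $N(z)$, the discrete Green's function, and a Chernoff/Bernstein estimate. Your reconstruction is in the right spirit (it is essentially how Levine--Peres derive their inner estimate), but it is much heavier than what the paper actually does, and it contains one small imprecision worth fixing: the event $\{z\ \text{is}\ \eps\text{-late}\}$ is $\{z \notin A_m(\tau'_0 m^2)\}$ where $\tau'_0 \leq \tau$ is the first time $z$ enters $(D_{\tau'_0})_\eps$, and this is a \emph{larger} event than the $\{z \notin A_m(\tau m^2)\}$ you propose to bound (since $A_m$ is increasing). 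You should run the Green's function estimate at time $\tau'_0$ rather than $\tau$; this actually simplifies things, since at $\tau'_0$ the point $z$ is exactly at distance $\eps$ from $\partial D_{\tau'_0}$. Also, to get the uniform constants $C_0, c_0$, one must note (as the paper implicitly does via compactness of $[0,T]$ and the smoothness of the flow) that the constant $c$ in the Levine--Peres bound can be taken uniform over $\tau$ and $z$; you assert this without justification at the end. With those two points tightened, your proposal is a valid (if more laborious) route to the lemma.
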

\begin{proof}
	By Levine and Peres \cite[p.~49]{Levine_2010}, the probability $\mathbb{P}_z$ that some $z\in\frac{1}{m}\mathbb{Z}^2\cap(D_\tau)_\eps$ is $\eps$-late is bounded as
	\[\mathbb{P}_z:=\mathbb{P}\left[z\in\frac{1}{m}\mathbb{Z}^2\cap(D_\tau)_\eps\;\text{is}\;\eps\text{-late}\right]\leq 4e^{-cm^2/\log m}\]
	for large enough $m$, where $c$ depends only on $S_m$, $D_\tau$, and $\eps$. Now, $\#\{z\in\frac{1}{m}\mathbb{Z}^2\cap(D_\tau)_\eps\}=O(m^2)$, so we can bound the total probability of $\mathcal{L}_\eps[\tau m^2]$ as
	\[\mathbb{P}\left(\mathcal{L}_\eps[\tau m^2]\right)\leq\sum_{z\in\frac{1}{m}\mathbb{Z}^2\cap(D_\tau)_\eps}\mathbb{P}_z\leq C_0m^2e^{-cm^2/\log m}\]
	for some $C_0$. Choosing some $c_0<c$, we have $m^2e^{-cm^2/\log m}\leq e^{-c_0m^2/\log m}$ for large enough $m$, and the lemma follows.
\end{proof}

The next two lemmas control the flow $s\mapsto D_s$. In short, the first shows that the arclength of $\partial D_s$ is uniformly bounded on both sides, and the second shows that $D_s$ grows at a linear rate at all points. The first follows directly from the smoothness of $D_s$.
\begin{lemma}\label{heleshawlength}
	For $s\in[0,T]$, the arclength of $\partial D_{s}$ is bounded as
	\[u\leq \op{Len}(\partial D_{s})\leq U,\]
	where $u,U>0$ are constants depending only on the flow.
\end{lemma}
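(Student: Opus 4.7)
My plan is to treat this as essentially a compactness argument, exploiting the hypothesis that $s \mapsto D_s$ is a smooth isotopy. First, I would unpack the definition: by hypothesis the embeddings $\iota_s : D_0 \hookrightarrow \mathbb{R}^2$ with $\iota_s(D_0) = D_s$ form a smooth homotopy on $s \in [0,T]$. Restricting to the boundary and composing with a fixed smooth parametrization $\gamma : S^1 \to \partial D_0$, we obtain a smooth map
\[
\Gamma : [0,T] \times S^1 \longrightarrow \mathbb{R}^2, \qquad \Gamma(s,\theta) := \iota_s(\gamma(\theta)),
\]
which for each $s$ parametrizes $\partial D_s$.

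Next, I would define the arclength function $L(s) := \op{Len}(\partial D_s) = \int_{S^1} |\partial_\theta \Gamma(s,\theta)| \, d\theta$. Since $\Gamma$ is smooth on the compact set $[0,T] \times S^1$, the integrand $|\partial_\theta \Gamma|$ is continuous in $(s,\theta)$ and uniformly bounded. Standard differentiation under the integral sign (or just dominated convergence) shows that $L$ is continuous---indeed smooth---on $[0,T]$. Compactness of $[0,T]$ then yields finite constants $u := \min_{s} L(s)$ and $U := \max_{s} L(s)$, both attained.

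It remains only to verify $u > 0$. Each $D_s$ is a compact, connected, proper subdomain of $\mathbb{R}^2$ with smooth (hence $C^1$) boundary---it contains $D_0$ (which has positive volume) and is bounded (since the total mass is finite, or equivalently since $D_s \subset D_0 \oplus \bigcup_i Q_i^{T_i}$ is contained in a compact set). Therefore $\partial D_s$ is a nonempty smooth closed curve and $L(s) > 0$ pointwise, so the minimum $u$ is strictly positive.

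The ``main obstacle,'' such as it is, is really just making sure that smoothness of the flow as defined in the paper genuinely translates to smoothness of the parametrized boundary family---so that the integrand defining $L(s)$ is jointly continuous on a compact set. Once that is in place, the bounds are immediate from the extreme value theorem, with no quantitative estimate required beyond dependence on the flow.
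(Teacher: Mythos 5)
Your proof is correct and fills in the compactness argument that the paper leaves implicit: the paper offers no proof for this lemma, stating only that it "follows directly from the smoothness of $D_s$." Your spelled-out argument---smooth parametrization $\Gamma$ on the compact set $[0,T]\times S^1$, continuity of the arclength integrand, and the extreme value theorem, together with the observation that each $\partial D_s$ is a nonempty smooth closed curve so $L(s)>0$---is exactly the argument the paper has in mind.
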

\begin{lemma}\label{heleshawdist}
	For a smooth flow $D_s$ and any times $T\geq s_1\geq s_0\geq 0$,
	\[v(\sqrt{1+s_1}-\sqrt{1+s_0})\leq d(D_{s_1}^c, D_{s_0}):=\inf\{d(x,y)\;|\;x\in D_{s_1}^c,y\in D_{s_0}\}\]
	and
	\[V(\sqrt{1+s_1}-\sqrt{1+s_0})\geq d_H( D_{s_1},D_{s_0}):=\inf\nolimits_{x\in D_{s_1}}\sup\nolimits_{y\in D_{s_0}}d(x,y)\]
	where $v,V>0$ are constants depending only on $S_m$.
\end{lemma}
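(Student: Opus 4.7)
The plan is to relate both bounds to uniform pointwise bounds on the normal velocity of the flow at the boundary. Let $V_n(z,s)$ denote the outward normal velocity of $\partial D_s$ at $z\in\partial D_s$, read off from the smooth isotopy. By smoothness, $V_n$ extends continuously to the compact set $\{(z,s):z\in\partial D_s,\;s\in[0,T]\}$, and by the quadrature-domain characterization of $D_s$ (Lemma \ref{quaddomain}) together with the obstacle-problem formulation of Levine--Peres \cite{Levine_2010}, $V_n$ equals the inward normal derivative on $\partial D_s$ of a nonnegative, superharmonic obstacle potential; Hopf's lemma then forces $V_n>0$ everywhere on $\partial D_s$. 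Compactness upgrades this to uniform constants $0<v_1\leq V_1<\infty$ with $v_1\leq V_n\leq V_1$ throughout $s\in[0,T]$.

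For the lower bound, introduce the entry-time function $\tau(x):=\inf\{s:x\in D_s\}$, a smooth function on $D_T\setminus\op{int}(D_0)$ with $\tau^{-1}(s)=\partial D_s$ and $|\nabla\tau|=1/V_n\leq 1/v_1$. For any $x\in D_{s_0}$ and $y\in D_{s_1}^c$, the value of $\tau$ rises from at most $s_0$ to at least $s_1$ along the straight segment $xy$ (either before exiting $D_T$, or upon crossing $\partial D_T$ where $\tau=T$), so the fundamental theorem of calculus on the portion of the segment where $\tau$ is defined yields
\[s_1-s_0\leq\int_{xy}|\nabla\tau|\,d\ell\leq\frac{|xy|}{v_1},\]
whence $|xy|\geq v_1(s_1-s_0)$ and hence $d(D_{s_1}^c,D_{s_0})\geq v_1(s_1-s_0)$. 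For the upper bound, fix $z\in\partial D_{s_1}$ and run the curve $z(s)$ defined by $z'(s)=\nabla\tau(z(s))/|\nabla\tau(z(s))|^2$ backward from $z(s_1)=z$; by construction $\tau(z(s))=s$ and $|z'(s)|=V_n\leq V_1$, so this curve has length at most $V_1(s_1-s_0)$ and lands at some $z(s_0)\in\partial D_{s_0}$. Thus $d_H(D_{s_1},D_{s_0})\leq V_1(s_1-s_0)$.

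Finally, to pass from $s_1-s_0$ to $\sqrt{1+s_1}-\sqrt{1+s_0}$, use the factorization $s_1-s_0=(\sqrt{1+s_1}-\sqrt{1+s_0})(\sqrt{1+s_1}+\sqrt{1+s_0})$ and the elementary estimate $2\leq\sqrt{1+s_1}+\sqrt{1+s_0}\leq 2\sqrt{1+T}$ on $[0,T]$; this converts the two inequalities into the stated form with $v:=2v_1$ and $V:=2V_1\sqrt{1+T}$. The main obstacle is the pointwise positivity $V_n\geq v_1>0$: smoothness of the isotopy alone guarantees only continuity (and, once the volume is known to be strictly increasing, nonnegativity on average) of $V_n$, whereas strict positivity at \emph{every} boundary point genuinely requires the Hele-Shaw/obstacle-problem structure invoked above. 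A minor technical point is handling the portion of the straight segment $xy$ that may lie inside $\op{int}(D_0)$ (where $\tau$ is undefined) or exit through $\partial D_T$; in either case restricting the integration to the subsegment on which $\tau$ is defined still captures the full variation $s_1-s_0$, so the argument goes through unchanged.
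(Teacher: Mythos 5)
Your proof is correct (with the technical elaboration you already flag) and takes a genuinely different route from the paper's. For the lower bound, the paper compares $D_s$ to a divisible sandpile on a fine lattice and iterates a discrete exit-probability estimate $\geq c/m$---an estimate it borrows, as a forward reference, from the later Lemmas \ref{harmoniclemma2-1}(d) and \ref{harmoniclemma2-2}(a,b); the upper bound is simply asserted from smoothness and compactness. You instead work entirely in the continuum: you introduce the normal velocity $V_n$ and the entry-time function $\tau$ with $|\nabla\tau|=1/V_n$, and prove both inequalities via the fundamental theorem of calculus along a segment (lower bound) and along gradient-flow trajectories of $\tau$ (upper bound). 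The essential input is the same in both proofs---$V_n$ is bounded strictly away from $0$ and $\infty$, encoding Hele--Shaw positivity---but you get it directly from the quadrature/obstacle-problem structure plus Hopf's lemma and compactness, whereas the paper obtains it implicitly from lower bounds on discrete exit probabilities. Your route buys a self-contained continuum argument that avoids circular-looking forward references and makes the mechanism (strictly positive normal velocity) explicit; the paper's route buys uniformity with the discrete toolkit it already builds, sidestepping any appeal to Hopf's lemma or boundary regularity of the obstacle potential. One small step worth spelling out in a final write-up: in the FTC argument, pass to a maximal subsegment $pq\subset xy$ on which $s_0\leq\tau\leq s_1$ (with $\tau(p)=s_0$, $\tau(q)=s_1$), so that $\tau$ is defined and smooth on all of $pq$ even if the full segment wanders through $\op{int}(D_0)$ or outside $D_T$; you flag this, and the fix is routine.
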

\begin{proof}
	The upper bound follows from the smoothness of $D_s$ and the compactness of the interval $[0,T]$.
	
	For the lower bound, we will exploit the fact that $D_s$ is also the scaling limit of divisible sandpile processes on $\{z_{m,1},...,z_{m,n_m}\}$ with starting set $D_0$. We will not give details on the divisible sandpile process here; see \cite{Levine_2010} for more details on scaling limits of divisible sandpiles.
	
	Choose an $s\in[0,T]$, and let $D^{1/m}_s(t)$ be the fully occupied set of the divisible sandpile on the lattice $\frac{1}{m}\mathbb{Z}^2$ with starting density
	\[\mathbf{1}_{D_s}+\sum_{i=m^2s}^{m^2s+t}\mathbf{1}_{z_{m,i}}.\] 
	In the interval $[s,s+\eps]$, a total of $\eps m^2$ particles are released---in fact, one particle is started at $z_{m,n}$ at each time $n/m^2$. From Lemmas \ref{harmoniclemma2-1}(d) and \ref{harmoniclemma2-2}(a,b), we can bound the exit probability as
	\[\mathbb{P}\left(z_{m,n}\;\text{exits}\;\frac{1}{m}\mathbb{Z}\cap D_{s}\;\text{at}\;z'\in\partial D_{s}\right)\geq\frac{c}{m},\]
	which tells us that, in the divisible sandpile model, we need $m/c$ particles to ensure that the new set $D^{1/m}_s(m/c)$ contains the $m^{-1}$-ball around $\frac{1}{m}\mathbb{Z}^2\cap D_s$: 
	\[D^{1/m}_s(m/c)\supset \left(\frac{1}{m}\mathbb{Z}^2\cap D_s\right)^{1/m}.\]
	Now, we can apply the same estimate to the expanded set
	\[\frac{1}{m}\mathbb{Z}^2\cap(D_s)^{1/m}\subset D^{1/m}_s(m/c).\]
	That is, if $z'$ is in the boundary of both $\frac{1}{m}\mathbb{Z}^2\cap(D_s)^{1/m}$ and $D^{1/m}_s(m/c)$, we have
	\[\mathbb{P}\left(z_{m,n}\;\text{exits}\;D^{1/m}_s(m/c)\;\text{at}\;z'\right)\geq\mathbb{P}\left(z_{m,n}\;\text{exits}\;\frac{1}{m}\mathbb{Z}\cap (D_s)^{1/m}\;\text{at}\;z'\in\partial D_{s}\right)\geq\frac{c}{m},\]
	and thus 
	\[D^{1/m}_s(2m/c)\supset \left(\left(\frac{1}{m}\mathbb{Z}^2\cap D_s\right)^{1/m}\right)^{1/m}\supset \left(\frac{1}{m}\mathbb{Z}^2\cap D_s\right)^{\sqrt{2}/m}.\]
	Continuing in this manner, we find that
	\[D^{1/m}_s(hm/c)\not\subset\left(\frac{1}{m}\mathbb{Z}^2\cap D_s\right)^{h/\sqrt{2}m},\]
	and in particular that
	\[D^{1/m}_s(\eps m^2)\not\subset\left(\frac{1}{m}\mathbb{Z}^2\cap D_s\right)^{\eps c/\sqrt{2}}.\]
	Now, $D_{s+\eps}$ is the scaling limit of these sets $D^{1/m}_s(\eps m^2)$, so we find that $d(D_{s+\eps}^c, D_{s})\geq \eps c/\sqrt{2}$. Then $\partial_\eps d(D_{s+\eps}^c, D_{s})\geq c/\sqrt{2}$ for all $\eps>0$, which implies the claim.
\end{proof}

Finally, the following two lemmas control the exit times of Brownian motion from an interval $[a,b]$. These are restatements of Lemmas 5 and 6 in \cite{10.2307/23072157}, so we omit the proofs here. Below, let $B(s)$ be centered, one-dimensional Brownian motion, and denote
\[\tau(-a,b)=\inf\{s>0\;|\;B(s)\notin[-a,b]\}.\]
\begin{lemma}\label{escapelemma}
	Let $0<a\leq b$. If $a+b\leq 3$, then
	\[\mathbb{E}e^{\tau(-a,b)}\leq 1+10ab.\]
\end{lemma}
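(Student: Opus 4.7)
The plan is to obtain $\mathbb{E}_0 e^{\tau(-a,b)}$ in closed form via Feynman--Kac, and then verify the inequality by an elementary trigonometric estimate. Set $u(x) := \mathbb{E}_x e^{\tau(-a,b)}$ for $x \in [-a, b]$. Applying It\^{o}'s formula to the process $e^{t\wedge\tau}\, u(B(t\wedge\tau))$ shows it is a local martingale precisely when $\tfrac{1}{2} u''(x) + u(x) = 0$ on $(-a,b)$; combined with the boundary condition $u = 1$ on $\{-a, b\}$, optional stopping (after a routine localization to handle the unbounded stopping time) yields $u(x) = \mathbb{E}_x e^\tau$. The general solution of this ODE is a linear combination of $\cos(\kappa x)$ and $\sin(\kappa x)$ for a suitable constant $\kappa$; solving the $2\times 2$ system for the coefficients from the boundary values and applying the sum-to-product identity produces the closed form
\begin{equation*}
\mathbb{E}_0 e^{\tau(-a,b)} \;=\; \frac{\cos(\kappa(b-a)/2)}{\cos(\kappa(a+b)/2)}.
\end{equation*}

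From here the inequality is pure calculus. Subtracting $1$ and applying the cosine-difference identity $\cos X - \cos Y = 2\sin\tfrac{X+Y}{2}\sin\tfrac{Y-X}{2}$ to the numerator gives
\begin{equation*}
\mathbb{E}_0 e^{\tau(-a,b)} - 1 \;=\; \frac{2\sin(\kappa a/2)\,\sin(\kappa b/2)}{\cos(\kappa(a+b)/2)}.
\end{equation*}
Bounding $\sin t \leq t$ upstairs gives an upper estimate of order $ab/\cos(\kappa(a+b)/2)$, so the proof reduces to the purely numerical claim that $\cos(\kappa(a+b)/2)$ is bounded below by a sufficiently large positive constant throughout $\{a+b \leq 3\}$---a direct check at the endpoint $a+b = 3$.

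\emph{Main obstacle.} Nothing in the argument is deep; the only real care-point is the range of validity, since $\mathbb{E}_0 e^{\tau(-a,b)}$ diverges once $\kappa(a+b)/2$ reaches $\pi/2$. The stated hypothesis $a+b \leq 3$ must be verified to fit strictly inside this cutoff with enough buffer that the coefficient of $ab$ in the final bound indeed comes out below $10$; granted that, the remaining steps are a handful of lines of trigonometric calculation and a single endpoint evaluation.
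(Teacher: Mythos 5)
Your approach is the standard and natural one, and the closed form is correct: with $\kappa=\sqrt2$ (so that $\tfrac12 u''+u=0$), solving the boundary-value problem and evaluating at $0$ does give
\[
\mathbb{E}_0 e^{\tau(-a,b)}=\frac{\cos\!\bigl((b-a)/\sqrt2\bigr)}{\cos\!\bigl((a+b)/\sqrt2\bigr)},
\qquad
\mathbb{E}_0 e^{\tau}-1=\frac{2\sin(a/\sqrt2)\sin(b/\sqrt2)}{\cos\!\bigl((a+b)/\sqrt2\bigr)}\le \frac{ab}{\cos\!\bigl((a+b)/\sqrt2\bigr)}.
\]
(The paper simply cites Lemma 5 of Jerison--Levine--Sheffield and omits the proof, so there is no internal proof to compare against; this Feynman--Kac derivation is surely what they had in mind.)

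However, the step you flagged as the ``main obstacle'' and then waved off does not check out, and this is where the proposal has a real gap. The convergence threshold is $(a+b)/\sqrt2<\pi/2$, i.e.\ $a+b<\pi/\sqrt2\approx 2.22$; for $a+b\ge \pi/\sqrt2$ the first Dirichlet eigenvalue of $-\tfrac12\partial_x^2$ on $(-a,b)$ drops below $1$ and $\mathbb{E}e^{\tau}=\infty$. In particular at the endpoint $a=b=3/2$, $a+b=3$ lies well past the threshold: $\cos(3/\sqrt2)\approx -0.52<0$, the ``closed form'' is negative, and the expectation is actually infinite, so the claimed inequality $\mathbb{E}e^{\tau}\le 1+10ab$ fails outright. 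In other words, the endpoint evaluation you proposed does not verify the hypothesis $a+b\le 3$ --- it refutes it. For the bound with constant $10$ one needs $\cos\!\bigl((a+b)/\sqrt2\bigr)\ge 1/10$, which forces roughly $a+b\le 2.08$; a clean sufficient hypothesis would be $a+b\le 2$. The hypothesis $a+b\le 3$ in the statement appears to be a transcription error from the cited source (whose Lemma 5 is phrased via a smallness condition on $ab$ together with the interval being short), and carrying your proof through to the end would have exposed this. To close your argument you should replace the unverified ``endpoint check'' with the explicit numerical condition $\cos\!\bigl((a+b)/\sqrt2\bigr)\ge 1/10$ and state the sharpened hypothesis under which it holds.
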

\begin{lemma}\label{escapelemma2}
	For any $k,s>0$,
	\[\mathbb{P}\left\{\sup_{s'\in[0,s]}B(s')\geq ks\right\}\leq e^{-k^2s/2}.\]
\end{lemma}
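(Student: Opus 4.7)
The plan is to apply a Chernoff-style argument using the exponential martingale of Brownian motion. For any $\lambda>0$, the process $e^{\lambda B(t)-\lambda^2 t/2}$ is a nonnegative martingale, so $e^{\lambda B(t)}$ is a nonnegative submartingale. First, I would rewrite the event using the monotonicity of the exponential:
\[\Bigl\{\sup_{s'\in[0,s]}B(s')\geq ks\Bigr\}=\Bigl\{\sup_{s'\in[0,s]}e^{\lambda B(s')}\geq e^{\lambda ks}\Bigr\}.\]

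Next, I would apply Doob's maximal inequality to the submartingale $e^{\lambda B(t)}$, combined with the explicit Gaussian moment generating function $\mathbb{E}[e^{\lambda B(s)}]=e^{\lambda^2 s/2}$, obtaining
\[\mathbb{P}\Bigl\{\sup_{s'\in[0,s]}e^{\lambda B(s')}\geq e^{\lambda ks}\Bigr\}\leq e^{-\lambda ks}\,\mathbb{E}\bigl[e^{\lambda B(s)}\bigr]=e^{-\lambda ks+\lambda^2 s/2}.\]
Finally, I would optimize over $\lambda>0$; the exponent $-\lambda ks+\lambda^2 s/2$ is a convex quadratic minimized at $\lambda=k$, and plugging this in yields the claimed bound $e^{-k^2s/2}$.

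An alternative route is the reflection principle, which gives the exact identity $\mathbb{P}\{\sup_{s'\in[0,s]}B(s')\geq a\}=2\mathbb{P}\{B(s)\geq a\}$; combined with the standard one-sided Gaussian tail bound $\mathbb{P}\{B(s)\geq a\}\leq\tfrac{1}{2}e^{-a^2/(2s)}$ and the choice $a=ks$, this yields the same estimate. There is no real obstacle here---the only points that require care are using the submartingale form of Doob's inequality (rather than a version that would introduce an extra multiplicative constant) and correctly identifying the optimal $\lambda$. Since this lemma is stated without proof in the excerpt and attributed to \cite{10.2307/23072157}, I would expect the authors to reference the same exponential martingale argument.
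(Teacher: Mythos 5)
The paper does not give a proof of this lemma: it states explicitly that Lemmas \ref{escapelemma} and \ref{escapelemma2} are restatements of Lemmas 5 and 6 of \cite{10.2307/23072157} and omits the proofs. Your exponential-martingale/Doob argument is correct (the optimization over $\lambda$ and the resulting exponent are right), and your reflection-principle alternative is also valid, so either would serve as a self-contained proof where the paper chose to cite instead.
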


\subsection{The recurrent potential kernel}\label{kernelsection}
Key to much of our analysis will be the so-called \emph{recurrent potential kernel} $g:\mathbb{Z}^2\to\mathbb{R}$, which acts as a free Green's function for the discrete Poisson equation. We define it in probabilistic terms as
\[g(z):=\sum_{n=0}^\infty(P_n(0)-P_n(z)),\]
where $P_n(z)$ is the probability that an $n$-step simple random walk from the origin in $\mathbb{Z}^2$ ends at $z$. Importantly,
\[\Delta_hg(x):=\tfrac{1}{4}(g(x+1)+g(x-1)+g(x+i)+g(x-i))-g(x)=\delta_{x,0}.\]
That is, $\Delta_hg(0)=1$, but $\Delta_hg(x\neq 0)=0$. We will also use the first few terms of the asymptotic expansion of $g$:
\[\left|g(z)-\lambda - \frac{2}{\pi}\log|z|\right|\leq\frac{C_1}{|z|^2}.\]
A complete expansion was discovered by Kozma and Schreiber \cite{kozma2004}, but we will not use it here.

We also consider discrete derivatives of $g$. Without loss of generality, choose a unit vector $\unit{n}=\alpha_1\unit{x}+\alpha_2(\unit{x}+\unit{y})$ in the ``east-northeast'' half-quadrant---i.e., with $1\geq\alpha_1,\alpha_2\geq0$.
Then define
\begin{equation}\label{discderiv}
	\partial_{\unit{n}}g:=\alpha_1g(z-1)+\alpha_2g(z-(1+i))-(\alpha_1+\alpha_2)g(z),
\end{equation}
which is discrete harmonic away from $\{0,1,1+i\}$. Now, extend both $g$ and $\partial_\unit{n}g$ by linear interpolation to the grid 
\[\mathcal{G}:=\{(x,y)\in\mathbb{R}^2\;|\;x\in\mathbb{Z}\;\text{or}\;y\in\mathbb{Z}\}.\]

Choose a constant $c>0$ such that $\partial_{n}g(z)>0$ on the half-plane $\{z\in\mathcal{G}\;|\;z\cdot\unit{n}\leq c\}$; since the arc $[0,\pi/4]$ is compact, we can assume without loss of generality that $c$ holds this property for all $\unit{n}$. Numerical calculations show that we can take $c\gtrsim 1/5$.

For an integer $m\geq 1$, let $B_{R_0}^-=B_{mR_0}(mR_0\unit{n})$ be the radius $mR_0$ disk tangent to the origin in the direction $\unit{n}$. By Lemma 8(a) of \cite{10.2307/23072157}, we know that
\[\{z\in\mathcal{G}\;|\;\partial_{\unit{n}}g<-1/2mR'_0\}\subset (B_{R_0}^-)^{C_2}.\]
By the above discussion, this means that
\begin{equation}\label{fakeinclusion}
	\{z\in\mathcal{G}\;|\;\partial_{\unit{n}}g<-1/2mR_0\}\subset \{z\in\mathcal{G}\;|\;z\cdot\unit{n}> c\}\cap (B_{R_0}^-)^{C_2}\subset B^-_{R'_0},
\end{equation}
for any $R'_0\geq4C_2R_0/c$.
	\section{Early points imply late points}
The following sections make up the proof of Theorem \ref{biggun'}, split into three parts. First, we will show that the existence of an early point at time $t$ implies that of a similarly late point by the same time. For this, we use a harmonic function $H_\zeta(z)$ that has a pole at the proposed early point, $\zeta\in\frac{1}{m}\mathbb{Z}^2$, and we define a martingale $M_\zeta(t)$ (roughly) by summing the values of $H_\zeta(z)-H_\zeta(z_{m,i})$ over $A_m(t)$. Since $H_\zeta(\zeta)$ is large, the martingale takes a much larger value than expected at time $t$; we finish up by using Lemma \ref{escapelemma2} to show that this is unlikely.

In the following two sections, we set up the theory necessary for this first proof. 

\subsection{The discrete harmonic function $H_\zeta(z)$}
Choose $\zeta\in\frac{1}{m}\mathbb{Z}^2\cap(D_T\setminus D_{0})$, and let $\tau>0$ be such that $\zeta\in\partial D_\tau$. This is possible because the sets $\partial D_s$ for $s>0$ form a foliation of $D_T\setminus D_{0}$.

Without loss of generality, suppose the outward normal vector $\unit{n}$ to $\partial D_\tau$ at $\zeta$ is pointing into the ``east-northeast'' half-quadrant, or equivalently that $\unit{n}\cdot\unit{x}\geq\unit{n}\cdot\unit{y}\geq 0$. This subsumes other cases by reflecting the plane appropriately.

Now, write $\unit{n}=\alpha_1+\alpha_2(1+i)$. Because of the direction of $\unit{n}$, both $\alpha_1$ and $\alpha_2$ are positive and bounded below 1.

Define
\[H_\zeta(z)=\frac{\pi}{2}[\alpha_1g(mz-m\zeta-1)+\alpha_2g(mz-m\zeta-(1+i))-(\alpha_1+\alpha_2)g(mz-m\zeta)].\]
We can view this as a directional derivative of the potential kernel in the direction opposite $\unit{n}$. We will extend this by linear interpolation to the grid $\mathcal{G}_m=\{(x,y)\in\frac{1}{m}\mathbb{R}^2\;|\;x\in\frac{1}{m}\mathbb{Z}\;\text{or}\;y\in\frac{1}{m}\mathbb{Z}\}$.

This function is designed to be a discrete-harmonic approximation of the continuum function
\[F_\zeta(z)=\op{Re}\left(\frac{\unit{n}/m}{\zeta-z}\right),\]
pictured in Figure \ref{plotfig}, where we view $\unit{n}$ as a complex number.
\begin{figure}[H]
	\centering
	\includegraphics[scale=.5]{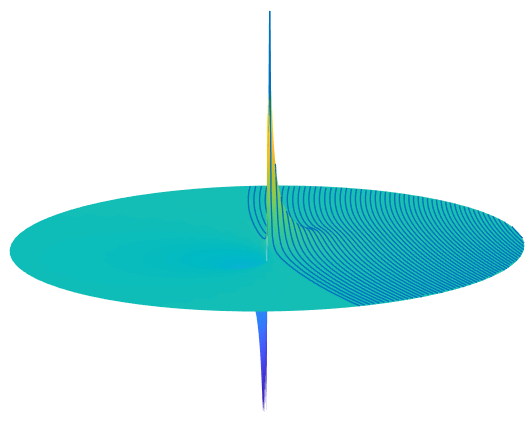}
	\caption{A plot of $F_\zeta$, with an example domain $D_\tau$ marked out by dark blue curves. Importantly, $F_\zeta$ has a large positive pole within $D_\tau$ at $\zeta$, but its negative pole lies entirely outside $D_\tau$. The discrete harmonic function $H_\zeta$ closely approximates this function away from the pole $\zeta$. }\label{plotfig}
\end{figure}

Now, choose $R'_0=R'_0(\tau)$ such that the two disks $B^+$ and $B^-$ of radius $R'_0$ tangent to $\partial D_\tau$ at any point lie entirely inside and outside $D_\tau$, respectively. Note that $R'_0$ is bounded away from zero, as $[0,T]$ is compact and $R'_0> 0$ for all time. Let $R_0=cR'_0/4C_2$, as in Equation \ref{fakeinclusion}, and define the following subsets of $\frac{1}{m}\mathbb{Z}^2$:
\[\Omega^1_\zeta=\mathcal{G}_m\cap D_\tau,\]
\[\Omega^2_\zeta = \{z\in\mathcal{G}_m\;|\;H_\zeta(z)>1/2mR_0\}\setminus\{\zeta+u\;|\;u\in(0,m^{-1})\}.\]
In short, $\Omega^1_\zeta$ is the discretized version of the Hele--Shaw level set $D_\tau$, and $\Omega^2_\zeta$ is an approximation of the ``inner'' radius $R_0$ circle tangent to $\partial D_\tau$ at $\zeta$. We will combine these as
\[\Omega_\zeta=\Omega^1_\zeta\cup\Omega^2_\zeta.\]
\begin{figure}[H]
	\centering
	\begin{tikzpicture}
		\node[anchor=south west,inner sep=0] (image) at (0,0) {\includegraphics[scale=1]{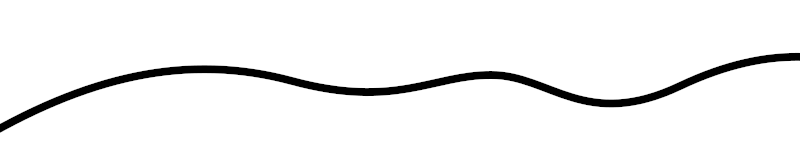}};
		\node[text width=2cm,rotate=0] at (.7,.7) {$\Omega_\zeta^1$};
		\node[text width=2cm,rotate=0] at (4.1,-.8) {$\Omega_\zeta^2$};
		\node[text width=2cm,rotate=0] at (5.25,1) {$\zeta$};
		\draw [line width=3pt] (4.25,.74) circle(.04cm);
		\draw [dashed, line width=1pt] (3.5,0.8) arc(120:410:1.355cm);
		\draw [dashed, line width=1pt] (3.5,0.8)--(5.05,0.66);
	\end{tikzpicture}
	\caption{We form our domain $\Omega_\zeta$ by combining the subsets $\Omega^1_\zeta$ and $\Omega_\zeta^2$; the latter guarantees that $H_\zeta$ is not too large on the boundary, but it may also affect the regularity of the boundary.}\label{domainfig}
\end{figure}
An example is pictured in Figure \ref{domainfig}. We summarize many of the basic properties of $H_\zeta$ and $\Omega_\zeta$ in the following lemma:
\begin{lemma}\label{harmoniclemma}
	For any $m$, $H_\zeta$ and $\Omega_\zeta$ satisfy the following properties:
	\begin{enumerate}[label=\emph{(\alph*)}]
		\item $H_\zeta$ is grid-harmonic in the interior of $\Omega_\zeta$, and $H_\zeta\geq-\frac{1}{2mR_0}$.
		\item $\zeta\in\partial\Omega_\zeta$, and for all $z\in\partial\Omega_\zeta\setminus\{\zeta\}$, we have $|H_\zeta(z)|\leq\frac{1}{2mR_0}$.
		\item There is an absolute constant $C_1<\infty$ such that
		\[|H_\zeta(z)-F_\zeta(z)|\leq C_1m^{-2}|z-\zeta|^{-2}.\]
		In particular, if $R_1=\inf_{z_{m,i}}d(z_{m,i},\zeta)$, then
		\[|H_\zeta(z_{m,i})-F_\zeta(z_{m,i})|\leq C_1m^{-2}R_1^{-2}.\]
		\item $1\leq H_\zeta(\zeta)\leq 2$.
	\end{enumerate}
\end{lemma}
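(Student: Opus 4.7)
The plan is to treat the four parts independently. Parts (c) and (d) reduce to algebra with the asymptotic expansion and explicit values of the recurrent potential kernel $g$, while (a) and (b) follow by combining the discrete harmonicity of $g$ away from the origin with the geometric inclusion (\ref{fakeinclusion}) and the interior/exterior tangent-ball hypothesis defining $R'_0$.

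For (c), I would substitute the expansion $g(w) = \lambda + (2/\pi)\log|w| + O(|w|^{-2})$ into the definition of $H_\zeta$ with $w := m(z-\zeta)$. The constants $\lambda$ cancel because the three coefficients $\alpha_1, \alpha_2, -(\alpha_1+\alpha_2)$ sum to zero. Expanding $\log|w-v| = \log|w| - \op{Re}(v/w) + O(|v|^2/|w|^2)$ for $v \in \{1, 1+i\}$ and identifying $\unit{n} = \alpha_1 + \alpha_2(1+i)$ as a complex number, the $1/w$ contributions collapse to $-(2/\pi)\op{Re}(\unit{n}/w)$; multiplying by $\pi/2$ recovers $F_\zeta(z) = \op{Re}((\unit{n}/m)/(\zeta-z))$, and the $O(|w|^{-2})$ remainder produces the claimed $C_1 m^{-2}|z-\zeta|^{-2}$ error. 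For (d), I would directly compute $H_\zeta(\zeta) = (\pi/2)[\alpha_1 g(-1) + \alpha_2 g(-(1+i)) - (\alpha_1+\alpha_2)g(0)] = (\pi\alpha_1/2) + 2\alpha_2$, using $g(0)=0$, the $\mathbb{Z}_4$ symmetry of $g$, and the standard values $g(1)=1$ and $g(1+i)=4/\pi$; a brief optimization on the unit-vector constraint $(\alpha_1+\alpha_2)^2+\alpha_2^2=1$ with $\alpha_1,\alpha_2\geq 0$ then confines $H_\zeta(\zeta)$ to the interval $[1,2]$.

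For (a), I would first note $H_\zeta(z) = (\pi/2)(\partial_{\unit{n}}g)(m(z-\zeta))$, so $H_\zeta$ inherits discrete harmonicity away from the three lattice points $\{\zeta, \zeta+1/m, \zeta+(1+i)/m\}$ at which $\partial_{\unit{n}}g$ is singular. The first sits on $\partial\Omega_\zeta$ by construction; direct evaluation analogous to (d) gives $H_\zeta(\zeta+1/m) = -\pi\alpha_1/2 \leq 0$ and $H_\zeta(\zeta+(1+i)/m) = \alpha_1(\pi/2 - 2) - 2\alpha_2 \leq 0$, so neither lies in $\Omega^2_\zeta$, and since they displace $\zeta$ in directions with positive $\unit{n}$-component and $\partial D_\tau$ is smooth, they lie outside $D_\tau$ for $m$ large. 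The lower bound $H_\zeta \geq -1/(2mR_0)$ is immediate on $\Omega^2_\zeta$; on $\Omega^1_\zeta = \mathcal{G}_m \cap D_\tau$, I would invoke (\ref{fakeinclusion}) after rescaling by $1/m$ and absorbing the $\pi/2$ prefactor into the constant, which forces any $z$ with $H_\zeta(z) < -1/(2mR_0)$ to lie in the exterior tangent disk $\zeta + B_{R'_0}(R'_0 \unit{n}) \subset D_\tau^c$, a contradiction. For (b), $\zeta$ is in $\partial\Omega_\zeta$ because the excluded open segment abuts it while $H_\zeta(\zeta) \geq 1$; every other boundary point of $\Omega_\zeta$ lies either on the level curve $\{H_\zeta = 1/(2mR_0)\}$ of the linear interpolant (giving the upper bound with equality) or on $\partial D_\tau$ outside $\Omega^2_\zeta$ (upper bound by exclusion from $\Omega^2_\zeta$, lower bound by (a)), so $|H_\zeta| \leq 1/(2mR_0)$ in every case.

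The main obstacle I anticipate is the constant juggling in (a): (\ref{fakeinclusion}) couples $R_0$ and $R'_0$ through the ratio $4C_2/c$ inherited from the kernel estimate, while $R'_0$ is independently constrained by the interior/exterior tangent-ball radius of $\partial D_\tau$, and the prefactor $\pi/2$ in $H_\zeta = (\pi/2)\partial_{\unit{n}}g$ shifts the threshold in (\ref{fakeinclusion}) by a multiplicative constant that must be absorbed. The natural fix is to fix $R'_0$ first from the smooth geometry of the flow, set $R_0 := cR'_0/(4C_2)$, and verify that the $\pi/2$ absorbs without breaking either inequality. A secondary concern is that (c) is stated on the interpolated grid $\mathcal{G}_m$ rather than only at lattice points, but since linear interpolation preserves the $C_1 m^{-2}|z-\zeta|^{-2}$ bound to leading order between nearest lattice values, the estimate descends to all of $\mathcal{G}_m$ without essential loss.
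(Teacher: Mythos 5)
Your proposal is correct and follows essentially the same route as the paper: it reduces (c) and (d) to the expansion and special values of the recurrent potential kernel (which the paper handles by directly citing Lemma 7(c,d) of Jerison–Levine–Sheffield), and it proves (a) and (b) by confining the three singular points of $\partial_{\unit{n}}g$ to $\partial\Omega_\zeta$ or the exterior, combining the sign computation at $\zeta+1/m$, $\zeta+(1+i)/m$ with the inclusion (\ref{fakeinclusion}). The only difference is that you carry out the algebra (e.g.\ $H_\zeta(\zeta)=\pi\alpha_1/2+2\alpha_2$ and the explicit negativity of $H_\zeta(\zeta+1/m)$, $H_\zeta(\zeta+(1+i)/m)$) that the paper delegates to the cited reference.
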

\begin{proof}$ $\medskip
	
	\noindent(a) By definition, $H_\zeta$ is grid-harmonic everywhere except for $\zeta$, $\zeta+1/m$, and $\zeta+(1+i)/m$. Firstly, $\zeta$ itself lies on the boundary of $\Omega_\zeta$ by definition. As the normal vector $\unit{n}$ to $\partial D_\tau$ at $\zeta$ points into the east-northeast half-quadrant, for large enough $m$, neither of the remaining points can lie in $D_\tau$ (and thus in $\Omega^1_\zeta$). Furthermore, $H_\zeta$ is negative at both points, as in \cite{10.2307/23072157}, so they cannot lie in $\Omega_\zeta^2$. Thus, they cannot lie in $\Omega_\zeta$, so $H_\zeta$ is grid-harmonic in that set.
	
	The lower bound follows from Equation \ref{fakeinclusion}.
	
	\noindent(b) As in part (a), the lower bound $H_\zeta(z)\geq-1/2mR_0$ is clear from Equation \ref{fakeinclusion}. The upper bound follows from the inclusion of $\Omega_\zeta^2$, as the boundary of $\Omega_\zeta$ must lie at or outside the boundary of $\Omega_\zeta^2$.
	
	\noindent(c, d) The last points are exactly Lemma 7(c, d) in \cite{10.2307/23072157}, as our notions of $H_\zeta$ and $F_\zeta$ are simply rotations of theirs.
\end{proof}

\begin{lemma}\label{harmoniclemma2}
	\begin{enumerate}[label=\emph{(\alph*)}]
		\item[]
		\item There is an absolute constant $C_2<\infty$ such that 
		\[\frac{1}{m}\mathbb{Z}^2\cap D_{\tau}\subset \Omega_\zeta\subset \frac{1}{m}\mathbb{Z}^2\cap(D_{\tau})^{C_2/m}.\]
		\item For any $U\subset\Omega_\zeta$, then 
		\[-\frac{1}{2mR_0}\leq H_\zeta(z)\leq\frac{1}{md(\zeta,U)-C_2}\]
		whenever $md(\zeta,U)>C_2$ and $z\in U$.
		\item For all $0<s<\tau$,
		\[\left|\sum_{z\in D_s\cap\frac{1}{m}\mathbb{Z}^2}H_\zeta(z)-\sum_{i=0}^{m^2s}H_\zeta(z_i)\right|\leq C_2\log m.\]
	\end{enumerate}
\end{lemma}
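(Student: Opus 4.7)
Parts (a) and (b) are direct. For (a), the left inclusion is immediate from $\frac{1}{m}\mathbb{Z}^2\cap D_\tau\subset\mathcal{G}_m\cap D_\tau=\Omega_\zeta^1\subset\Omega_\zeta$; for the right, $\Omega_\zeta^1\subset D_\tau$ trivially, and $\Omega_\zeta^2$ is contained in the inner radius-$R_0'$ disk tangent to $\partial D_\tau$ at $\zeta$ (by Equation \ref{fakeinclusion} combined with the definition $R_0=cR_0'/4C_2$), which lies in $D_\tau$ by the choice of $R_0'$; the $C_2/m$ buffer absorbs the discrete fattening from \ref{fakeinclusion}. For (b), the lower bound is Lemma \ref{harmoniclemma}(a), while combining Lemma \ref{harmoniclemma}(c) with $|F_\zeta(z)|\leq 1/(m|z-\zeta|)$ yields
\[
H_\zeta(z)\leq \frac{1}{m|z-\zeta|}+\frac{C_1}{(m|z-\zeta|)^2},
\]
and elementary algebra (choosing $C_2$ large relative to $C_1$) rearranges this to $1/(md(\zeta,U)-C_2)$ whenever $md(\zeta,U)>C_2$ and $z\in U$.

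For (c), my plan is a three-step reduction to a continuum identity via the quadrature property of $D_s$. \emph{Step 1:} Replace $H_\zeta$ by $F_\zeta$ in both sums using Lemma \ref{harmoniclemma}(c). The cumulative error on the $D_s$-sum is
\[
\sum_{z\in D_s\cap\frac{1}{m}\mathbb{Z}^2}\frac{C_1}{(m|z-\zeta|)^2}\leq C_1\sum_{k=1}^{O(m)}\frac{k}{k^2}=O(\log m),
\]
obtained by binning lattice points at dyadic distance $k/m$ from $\zeta$ (there are $O(k)$ such points). For the source sum, the sources $z_{m,i}\in\bigcup_iQ_i^{T_i}\subset\subset\op{int}(D_0)$ are uniformly bounded away from $\zeta\in D_T\setminus D_0$, so $|z_{m,i}-\zeta|\geq R_1>0$ and the $O(m^2)$ source contributions sum to $O(1)$; similarly for the $D_0$ portion. \emph{Step 2:} Since $F_\zeta$ is harmonic on $D_s$ (its only singularity $\zeta\in\partial D_\tau$ sits outside the open set $D_s$ for $s<\tau$), Lemma \ref{quaddomain} gives
\[
\int_{D_s}F_\zeta=\int_{D_0}F_\zeta+\sum_i\int_{Q_i^{s_i(s)}}F_\zeta.
\]
\emph{Step 3:} Pass from continuum integrals to their discrete Riemann sums. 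For $D_0$ and the sets $Q_i^{s_i(s)}$, $F_\zeta$ is smooth with bounded derivatives on their support, so the midpoint-rule Riemann error contributes only $O(1)$. For $D_s$, I exploit the dipole symmetry of $F_\zeta$: the angular cancellation around $\zeta$ makes both the sum and integral over any disk about $\zeta$ nearly vanish, so excising an $O(1/m)$ neighborhood of $\zeta$ costs only $O(1)$, and the remaining Riemann discrepancy is controlled by a boundary term of size $\op{Len}(\partial D_s)\cdot\sup_{\partial D_s}|F_\zeta|=O(\log m)$ via Lemma \ref{heleshawlength}.

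The main obstacle is Step 3 for the $D_s$-integral, because uniform control as $s\uparrow\tau$ is subtle: by Lemma \ref{heleshawdist}, $\zeta$ can approach $\partial D_s$ at distance $\sim 1/m$, and naive Riemann bounds then blow up. A possibly cleaner alternative is a discrete Green's identity against the divisible sandpile odometer $u_m$ on $D_s$: since $\Delta_h u_m=\mathbf{1}_{D_s\cap\frac{1}{m}\mathbb{Z}^2}-\sigma_m$ with $u_m$ compactly supported in $D_s$, summation by parts represents the target difference as $\sum_z u_m(z)\Delta_h H_\zeta(z)$, a sum localized to the three-point pole support of $\Delta_h H_\zeta$ at $\zeta$; the $O(\log m)$ bound then follows from the near-$\zeta$ behavior of $u_m$ together with the potential-kernel expansion of $H_\zeta$ at its pole.
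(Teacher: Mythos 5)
Parts (a) and (b) are essentially correct and track the paper's treatment. In (a), your inclusion $\Omega_\zeta^2\subset(B^+)^{C_2/m}\subset(D_\tau)^{C_2/m}$ (via Equation \ref{fakeinclusion} and the choice of $R_0'$) is exactly what is needed. In (b), the lower bound is Lemma \ref{harmoniclemma}(a), and the upper bound follows from Lemma \ref{harmoniclemma}(c) plus $|F_\zeta|\leq 1/(m|z-\zeta|)$, as you write.

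For (c), your Steps 1 and 2 reproduce the paper's strategy (replace $H_\zeta$ by $F_\zeta$ using Lemma \ref{harmoniclemma}(c) and dyadic binning near $\zeta$, then cancel with the quadrature identity of Lemma \ref{quaddomain}). But Step 3 is exactly where the paper does something different, and where your argument does not close. You correctly identify the obstacle: for $s\uparrow\tau$ the pole $\zeta$ approaches $\partial D_s$ at distance as small as $O(1/m)$. Your ``alternative 1'' boundary estimate $\op{Len}(\partial D_s)\cdot\sup_{\partial D_s}|F_\zeta|=O(\log m)$ is not a correct accounting: there are $O(m)$ boundary cells along $\partial D_s$, and when $\zeta$ is $O(1/m)$-close, $\sup_{\partial D_s}|F_\zeta|=O(1)$, so the naive ``cells $\times$ sup'' count is $O(m)$, not $O(\log m)$. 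An $O(\log m)$ boundary bound does hold, but only after binning boundary cells by distance from $\zeta$ (there are $O(1)$ of them at each dyadic scale $k/m$, each contributing $O(1/k)$); the ``dipole cancellation'' you gesture at is neither what produces this bound nor developed. The paper avoids the near-$\zeta$ Riemann comparison entirely: it fixes $s_0<\tau$ maximal with $D_{s_0}\subset(D_\tau)_{4C_1/m}$, runs the sum-to-integral comparison only on $A=D_{s\wedge s_0}\cap\frac{1}{m}\mathbb{Z}^2$ (which is uniformly bounded away from $\zeta$, so the four-disk cover $U_R$ controls $\sum C_1/(m|z-\zeta|)^2$ and $\int|\nabla F_\zeta|$ by $O(\log m)$), and then bounds the remainder $B=(D_s\cap\frac{1}{m}\mathbb{Z}^2)\setminus D_{s_0}$ directly as a thin annulus where $|H_\zeta|\lesssim 1/(md(z,\zeta))$, again getting $O(\log m)$ by a distance-binned sum.

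Your ``alternative 2'' (summation by parts against the sandpile odometer) is a genuinely different idea, but as stated it does not do what you claim. With $\Delta_h u_m=\mathbf{1}_{A_m}-\sigma_m$ for the sandpile set $A_m\subset D_\tau$, the odometer $u_m$ vanishes on and outside $\partial A_m$; since the pole triple $\{\zeta,\zeta+1/m,\zeta+(1+i)/m\}$ lies on or outside $\partial D_\tau\supset\partial A_m$, you get $\sum_z u_m\Delta_h H_\zeta=0$, so $\sum_{A_m}H_\zeta=\sum H_\zeta\sigma_m$ exactly---there is no ``sum localized to the three-point pole'' to estimate. The entire error must therefore come from $\sum_{(D_s\cap\frac{1}{m}\mathbb{Z}^2)\triangle A_m}\pm H_\zeta$, which is precisely the thin-annulus estimate the paper handles via $B$; you cannot avoid it, and it---not the ``near-$\zeta$ behavior of $u_m$''---is the source of the $O(\log m)$. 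So the odometer idea is not a shortcut around the hard step, and the hard step is not yet carried out in your proposal.
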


\begin{proof}$ $\medskip
	
	\noindent(a) As shown in \cite{10.2307/23072157}, the level sets of $H_\zeta$ differ from the level curves of $F_\zeta$ by at most a fixed distance $C_2/m=2C_1/m$. In particular, $\Omega_\zeta^2\subset (B^+)^{C_2/m}$, where $B^+$ is the disk of radius $R_0$ contained within $D_\tau$ and tangent to $\partial D_\tau$ at $\zeta$.
	
	By construction, $(B^+)^{C_2/m}\subset (D_\tau)^{C_2/m}$. Thus, by adding $\Omega_\zeta^2$, we never modify points in $\Omega_\zeta^1$ outside the narrow strip $(D_\tau)^{C_2/m}\setminus D_\tau$.
	
	\noindent(b) The proof of this fact is the same as that of Lemma 8(b) of \cite{10.2307/23072157}, but now using the fact that $\sup_U F_\zeta\leq\frac{1}{md(\zeta,U)}$.
	
	\noindent(c) Let $s_0$ be maximal such that $D_{s_0}\subset (D_\tau)_{4C_1/m}$---by Lemma \ref{heleshawdist}, we know that
	\begin{equation}\label{hausdorffdisteq1}
		d_H(\partial D_{\tau},\partial D_{s_0})\leq \frac{V}{v}d(\partial D_{\tau},\partial D_{s_0})\leq 4VC_1/mv.
	\end{equation}
	Write $D_s\cap\frac{1}{m}\mathbb{Z}^2=A\cup B$, where 
	\[A=\left(D_{s}\cap\frac{1}{m}\mathbb{Z}^2\right)\cap D_{s_0}=D_{s\wedge s_0}\cap\frac{1}{m}\mathbb{Z}^2,\]
	\[B=\left(D_s\cap\frac{1}{m}\mathbb{Z}^2\right)\setminus D_{s_0}.\]
	Now, choose $R>0$ with the following properties:
	\begin{itemize}
		\item The union $U_{R}$ of the disks of radius $R$ centered at $\zeta \pm (R+2C_1/m)$ and at $\zeta\pm i(R+2C_1/m)$ is connected.
		\item The connected component of $\zeta$ in $\mathbb{R}^2\setminus U_{R}$ is contained within $B_{4C_1/m}(0)$
		\item We have $B_{R}(\zeta)\setminus B_{4C_1/m}(\zeta)\subset U_{R}$.
	\end{itemize}
	\begin{figure}[H]
		\centering
		\begin{tikzpicture}
			\clip (-3,-.3) rectangle (8,4.3);
			\node[text width=2cm,rotate=0] at (4.15,2.1) {$\zeta$};
			\draw [line width=3pt] (3,2) circle(.04cm);
			\draw [dashed, line width=1pt,fill=yellow, opacity=0.2] (3,-.4) circle(2cm);
			\draw [dashed, line width=1pt,fill=yellow, opacity=0.2] (5.4,2) circle(2cm);
			\draw [dashed, line width=1pt,fill=yellow, opacity=0.2] (.6,2) circle(2cm);
			\draw [dashed, line width=1pt,fill=yellow, opacity=0.2] (3,4.4) circle(2cm);
		\end{tikzpicture}
		\caption{An illustration of $U_R$ (yellow). Note that the disks of radius $R$ do not get ``too close'' to $\zeta$, so the sum $\sum_{z\in U_R}\frac{1}{m^2|z-\zeta|^2}$ is of order $\log m$.}\label{circfig}
	\end{figure}
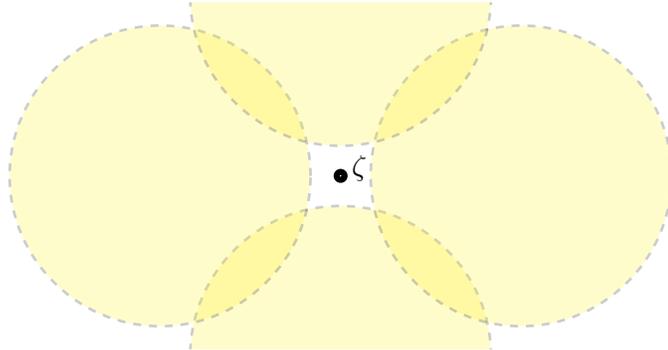
	Note that, for any $r>R$, this implies $B_{r}(0)\setminus B_{4C_1/m}(0)\subset U_{r}$. Define 
	\[R'=\max\left(R,\op{diam}(D_0)+2V\sqrt{1+\tau}\right).\]
	Importantly, by Lemma \ref{heleshawdist},
	\[D_\tau\subset B_{R'}(\zeta).\]
	Since $D_{s_0}$ does not intersect $B_{4C_1/m}$, this means that $A\subset D_{s_0}\subset U_{R'}$. By Lemma \ref{harmoniclemma}(c),
	\begin{align*}
		\left|\sum_{z\in A}\left(H_\zeta(z)-F_\zeta(z)\right)-\sum_{i=1}^{m^2(s\wedge s_0)}\left(H_\zeta(z_i)-F_\zeta(z_i)\right)\right|&\leq\sum_{z\in A}\frac{C_1}{m^2|z-\zeta|^2}+\frac{s_0}{|\zeta|^2}\\
		&\leq\sum_{z\in U_{R'}\cap\frac{1}{m}\mathbb{Z}^2}\frac{C_1}{m^2|z-\zeta|^2}+\frac{s_0}{|\zeta|^2}\\
		&\leq 32C_1\log mR'+\frac{s_0}{|\zeta|^2}\\
		&\leq \frac{C_2}{6}\log m,
	\end{align*}
	for an appropriately chosen $C_2$. Using the bound $|\nabla F_\zeta(z)|\leq C|z-\zeta|^{-2}$, we bound
	\begin{align*}
		\left|m^2\int_{D_{s\wedge s_0}}F_\zeta-\sum\nolimits_{A}F_\zeta\right|\leq C\int_{D_{s\wedge s_0}}\frac{dA}{|z-\zeta|^2}\leq C\int_{U_{R'}}\frac{dA}{|z-\zeta|^2}\leq\frac{C_2'}{6}\log m,
	\end{align*}
	and similarly,
	\begin{align*}
		\left|m^2\int F_\zeta\sigma_{s\wedge s_0}-\sum_{i=1}^{m^2(s\wedge s_0)}F_\zeta(z_i)\right|\leq\frac{C_2'}{6}\log m.
	\end{align*}
	By Lemma \ref{quaddomain}, however, 
	\[\int_{D_{s\wedge s_0}}F_\zeta (1-\sigma_{s\wedge s_0})=0,\]
	and we thus find
	\[\left|\sum_{z\in A}H_\zeta(z)-\sum_{i=1}^{m^2(s\wedge s_0)}H_\zeta(z_i)\right|\leq\frac{C_2'}{2}\log m.\]
	Finally, we must show that the contribution of $B$ (if it is nonempty) to the sum is negligible.
	From Equation \ref{hausdorffdisteq1}, we know that $D_s\setminus D_{s_0}\subset D_\tau\setminus(D_\tau)_{4VC_1/mv}$ and thus that $B\subset D_\tau\setminus(D_\tau)_{4VC_1/mv}$. Thus, there are at most $Um\cdot4VC_1/mv$ points in $B$; since $H_\zeta$ decreases as $1/md(z,\zeta)$ around the edge of $D_\tau$, we find that
	\[\left|\sum_{z\in B}H_\zeta(z)\right|\leq \frac{4UVC_1}{v}\cdot C\log m\leq\frac{C'}{4}\log m.\]
	Similarly, there are at most $4VUC_1/v$ source points between times $s_0$ and $s$, so we bound the final term as
	\[\left|\sum_{i=m^2s_0}^{ m^2s}H_\zeta(z_{m,i})\right|\leq\frac{4VUC_1}{v}\cdot\sup_{z_{m,i}}\left|H_\zeta(z_{m,i})\right|=O(m^{-1})\leq \frac{C'}{4}\log m.\]
	Putting these contributions together implies the lemma.
\end{proof}

	\subsection{The martingale $M_\zeta(t)$}\label{martingalesection}
The harmonic function $H_\zeta$ gives rise to a natural martingale associated to our IDLA process, using the concept of a \emph{grid Brownian motion}:
\begin{definition}
	A \emph{grid Brownian motion} starting at the point $x\in\frac{1}{m}\mathbb{Z}^2$ is a random process $t\mapsto W_t\in\mathcal{G}_m$ defined as follows.
	
	Let $B_t$ be an origin-centered Brownian motion, and for each integer $n\geq 1$, let $\tau_n^*>0$ be the $n^{th}$ time that $B_t$ visits a point in $\frac{1}{m}\mathbb{Z}$. For each $n$, choose a uniform random direction $\unit{u}_n\in\{1,i\}$. For $t\in[\tau_n^*,\tau_{n+1}^*]$, define
	\[W_t=x+\sum_{j=1}^{n-1}\left(B_t(\tau_{j+1}^*)-B_t(\tau_j^*)\right)\unit{u}_j+\left(B_t(t)-B_t(\tau_n^*)\right)\unit{u}_n.\]
	In short, $W_t$ is simply the process $B_t$, but turning in a random direction at each lattice point.
\end{definition}
For $k\in\{0,1,2,...\}$, let $\tilde{\beta}_k(s)$ be independent Brownian motions on the grid $\mathcal{G}_m$, starting at the source points $z_{m,k}$. We will define a modified IDLA process $A_\zeta(t)$ by induction. Let $A_\zeta(0)=A_m(0)=\frac{1}{m}\mathbb{Z}^2\cap D_0$, and let
\[s_k^*=\inf\left\{s\geq 0\;|\;\tilde{\beta}_k(s)\in\big(\tfrac{1}{m}\mathbb{Z}^2\setminus A_\zeta(k)\big)\cup\big(\mathcal{G}_m\setminus\Omega_\zeta\big)\right\}.\]
Then set $\beta_k(s)=\tilde{\beta}_k\big(\min\big(\frac{s}{1-s},s_k^*\big)\big)$, and set $A_\zeta(k+s)=A_\zeta(k)\cup\{\beta_k(s)\}$ for $0\leq s\leq 1$.

Since $H_\zeta$ is grid-harmonic, the process
\[M_\zeta(t):=\sum_{\ell=0}^{\lfloor t\rfloor-1}\left(H_\zeta(\beta_\ell(1))-H_\zeta(z_{m,\ell})\right)+H_\zeta(\beta_\ell(t-\lfloor t\rfloor))-H_\zeta(z_{m,\lfloor t\rfloor})\]
is a continuous-time martingale adapted to $\mathscr{F}_t=\sigma\{A_\zeta(s)\;|\;0\leq s\leq t\}$. By the Dubins--Schwarz theorem \cite[Theorem V.1.6]{revuz1991continuous}, we can write $M_\zeta(t)=B_\zeta(S_\zeta(t))$, where $S_\zeta(t)=\langle M_\zeta,M_\zeta\rangle_t$ is the quadratic variation of $M_\zeta$ and $B_\zeta$ is a standard Brownian motion.

For each $k$, $S_\zeta(k)$ is a stopping time w.r.t.~the filtration $\{\mathscr{F}_{T_\zeta(s)}\}_{s\geq 0}$, where $T_\zeta(s)=\inf\{t\;|\;S_\zeta(t)>s\}$. Further, $B_\zeta(s)$ is adapted to this filtration. By the strong Markov property, the processes 
\[\tilde{B}^k_\zeta(u):=B_\zeta(S_\zeta(k)+u)-B_\zeta(S_\zeta(k))\]
are independent Brownian motions started at zero.

Finally, for $-a<0<b$, write $\tau_k(-a,b)=\inf\{u>0\;|\;\tilde{B}_\zeta^k(u)\notin[-a,b]\}$. We will use these exit times in accordance with the following lemma, which is just a restatement of Lemma 9 of \cite{10.2307/23072157} in our setting:
\begin{lemma}\label{escapetime}
	Fix $\zeta\in\frac{1}{m}\mathbb{Z}^2\setminus D_{0}$, and let
	\[-a_k=\min_{z\in\partial A_\zeta(k)}(H_\zeta(z)-H_\zeta(z_{m,k+1})),\qquad b_k=\max_{z\in\partial A_\zeta(k)}(H_\zeta(z)-H_\zeta(z_{m,k+1})).\]
	Then
	\[S_\zeta(k+1)-S_\zeta(k)\leq\tau_k(-a_k,b_k).\]
\end{lemma}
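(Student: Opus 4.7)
The plan is to show that the displaced increment $M_\zeta(t) - M_\zeta(k)$ stays inside $[-a_k, b_k]$ for every $t \in [k, k+1]$; once this uniform bound is in hand, the Dubins--Schwarz representation immediately yields the claim. Indeed, writing $M_\zeta(t) - M_\zeta(k) = \tilde{B}_\zeta^k(S_\zeta(t) - S_\zeta(k))$ for $t \in [k, k+1]$, and noting that $t \mapsto S_\zeta(t)$ is continuous and nondecreasing, the containment $M_\zeta(t) - M_\zeta(k) \in [-a_k, b_k]$ forces $\tilde{B}_\zeta^k(u)$ to remain in $[-a_k, b_k]$ for all $u \in [0, S_\zeta(k+1) - S_\zeta(k)]$. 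This is exactly the assertion $\tau_k(-a_k, b_k) \geq S_\zeta(k+1) - S_\zeta(k)$.

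To verify the containment, I would unpack $M_\zeta(t) - M_\zeta(k) = H_\zeta(\beta_k(t-k)) - H_\zeta(z_{m,k+1})$, with $\beta_k(s) = \tilde\beta_k(\min(s/(1-s), s_k^*))$. At $t = k+1$ the walk has been stopped at a lattice point of $\partial A_\zeta(k)$, or (if it first exited through $\partial \Omega_\zeta$) at a grid point where Lemma \ref{harmoniclemma}(b) forces $|H_\zeta|$ to be at most $1/(2mR_0)$; in either case the terminal value lies in $[-a_k, b_k]$. For intermediate times $t \in (k, k+1)$, the walk visits either interior lattice points of $A_\zeta(k)$, or points on grid edges joining such interior lattice points to neighbors in $A_\zeta(k) \cup \partial A_\zeta(k)$. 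On each edge $H_\zeta$ is linear and so bounded by its values at the two endpoints; at lattice endpoints, the discrete maximum principle applied to the grid-harmonic function $H_\zeta$ on $A_\zeta(k) \subset \Omega_\zeta$ bounds $H_\zeta - H_\zeta(z_{m,k+1})$ above by $b_k$ and below by $-a_k$.

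The overall argument is essentially that of Lemma~9 in \cite{10.2307/23072157}, transcribed into our setting. The one delicate point is the invocation of the discrete maximum principle: I must verify that $\tilde\beta_k$ never leaves the grid-harmonic region of $H_\zeta$ before time $s_k^*$ (which follows immediately from $A_\zeta(k) \subset \Omega_\zeta$ and the construction of $s_k^*$), and that any excursion terminating on $\partial\Omega_\zeta$ rather than on $\partial A_\zeta(k)$ is still absorbed into the $[-a_k, b_k]$ bound via Lemma \ref{harmoniclemma}(a, b). I expect this to be the main obstacle, but it is ultimately a bookkeeping exercise; the remainder is a direct consequence of Dubins--Schwarz and monotonicity of the quadratic variation clock.
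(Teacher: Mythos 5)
The paper gives no proof of this lemma at all; it defers entirely to Lemma~9 of Jerison--Levine--Sheffield. Your reconstruction is exactly that argument transcribed into the present setting: show the martingale increment $M_\zeta(t)-M_\zeta(k)=H_\zeta(\beta_k(t-k))-H_\zeta(z_{m,k+1})$ stays in $[-a_k,b_k]$ on $[k,k+1]$ by grid-harmonicity and the discrete maximum principle (linearity of $H_\zeta$ along grid edges handles intermediate grid points), then use continuity and monotonicity of the quadratic-variation clock $S_\zeta$ to transfer this to the Dubins--Schwarz Brownian motion $\tilde B^k_\zeta$. That is the intended proof.

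The one issue you flag is indeed the only delicate point. The walk $\tilde\beta_k$ can be stopped either at a lattice point first reached outside $A_\zeta(k)$ or at a grid point of $\partial\Omega_\zeta$, and the definition of $a_k,b_k$ names only $\partial A_\zeta(k)$. Two remarks to close this: first, the stopping set for $\tilde\beta_k$ is exactly the grid boundary of the region $A_\zeta(k)\cap\Omega_\zeta$ in which $H_\zeta$ is grid-harmonic, so the maximum principle already tells you that every value $H_\zeta(\beta_k(s))-H_\zeta(z_{m,k+1})$ for $s\in[0,1]$ lies between the extremes of $H_\zeta-H_\zeta(z_{m,k+1})$ on that full stopping set; reading ``$\partial A_\zeta(k)$'' as that stopping set (the JLS convention for the modified aggregate) makes the lemma tautologically correct. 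Second, even under the narrower reading, what is actually used downstream in Lemmas~\ref{insideest} and~\ref{outsideest} is not $a_k,b_k$ themselves but uniform upper bounds for them obtained from Lemmas~\ref{harmoniclemma}(a,b) and~\ref{harmoniclemma2}(b), and those bounds already dominate $|H_\zeta(z')-H_\zeta(z_{m,k+1})|$ for any $\partial\Omega_\zeta$ stopping point $z'\neq\zeta$; so the bookkeeping you anticipate does go through. Your proposal is correct.
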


We now proceed with the technique mentioned at the beginning of this section. That is, we will use the martingales $M_\zeta$ to detect the presence of a late or early point at $\zeta$; if either is the case, then $M_\zeta$ will be either much larger or much smaller than its mean. In turn, Lemma \ref{escapelemma2} will imply that this scenario is unlikely for small times $S_\zeta$. With the following two lemmas, we will be able to show that $S_\zeta$ is small on the event $\Eps_{a/m}(t)^c$, allowing the above argument to go through.

\begin{lemma}\label{insideest}
	Suppose $D_s$ is a smooth flow arising from an initial mass distribution. For 
	\[m\geq \max(3a+C_2,2C_2/\inf\nolimits_\zeta R_1),\]
	all $s\in[0,T]$, and $\zeta\notin (D_{s})^{(4a+2C_2)/m}$, we have
	\[\mathbb{E}\left[e^{S_\zeta(m^2s)}\mathbf{1}_{\Eps_{(a+1)/m}(m^2s)^c}\right]\leq m^K,\]
	where $K$ is a constant depending only on the flow.
\end{lemma}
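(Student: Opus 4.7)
My strategy is to bound $S_\zeta(m^2s)$ telescopically using the exit-time representation of Lemma \ref{escapetime} together with the exponential-moment bound of Lemma \ref{escapelemma}. Since each $\tilde B_\zeta^k$ is independent of $\mathscr{F}_{T_\zeta(S_\zeta(k))}$, iterated conditioning yields
\[\mathbb{E}\!\left[e^{S_\zeta(m^2s)}\mathbf{1}_{\Eps_{(a+1)/m}(m^2s)^c}\right] \leq \mathbb{E}\!\left[\mathbf{1}_{\Eps_{(a+1)/m}(m^2s)^c}\prod_{k=0}^{m^2s-1}(1+10a_kb_k)\right],\]
once I verify $a_k+b_k \leq 3$ deterministically on the good event. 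Using $\prod(1+x_k) \leq \exp(\sum x_k)$, the task reduces to showing $\sum_k a_k b_k = O(\log m)$ on $\Eps_{(a+1)/m}(m^2s)^c$.

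To control $a_k$ uniformly: by Lemma \ref{harmoniclemma}(a), $H_\zeta \geq -1/(2mR_0)$, and since every source point $z_{m,k+1}$ lies in $D_0$ at distance at least $R_1 > 0$ from $\zeta$ (which is exterior to $D_s \supset D_0$), Lemma \ref{harmoniclemma}(c) yields $|H_\zeta(z_{m,k+1})| = O(1/m)$. Hence $a_k \leq 1/(2mR_0) + |H_\zeta(z_{m,k+1})| = O(1/m)$ uniformly in $k$.

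For $b_k$ the bound depends on $d_k := d(\zeta,\partial A_\zeta(k))$. On $\Eps_{(a+1)/m}(m^2s)^c$, the abelian property of IDLA adapted to $\Omega_\zeta$ couples $A_\zeta$ with $A_m$, giving $A_\zeta(k) \subseteq (D_{k/m^2})^{(a+1)/m}$ modulo points in $\partial\Omega_\zeta$ (where $H_\zeta \leq 1/(2mR_0)$ by Lemma \ref{harmoniclemma}(b)). Combining the hypothesis $\zeta \notin (D_s)^{(4a+2C_2)/m}$ with Lemma \ref{heleshawdist} gives $d(\zeta,D_{k/m^2}) \geq (4a+2C_2)/m + c(s-k/m^2)$ for a flow-dependent $c > 0$, so $d_k \geq (3a + C_2)/m + c\delta_k$ with $\delta_k := s-k/m^2$. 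Lemma \ref{harmoniclemma2}(b) then yields $b_k \leq 1/(md_k - C_2) + O(1/m) = O\!\left(1/(a+cm\delta_k)\right)$, which also confirms $a_k + b_k \leq 3$ for $m \geq 3a + C_2$.

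The main obstacle is the harmonic-sum bound $\sum_k a_k b_k = O(\log m)$. From the preceding estimates,
\[\sum_{k=0}^{m^2s-1}a_kb_k = O\!\left(\frac{1}{m}\sum_{k=0}^{m^2s-1}\frac{1}{a+cm\delta_k}\right) = O\!\left(\frac{1}{c}\log\!\left(1+\frac{cms}{a}\right)\right) = O(\log m),\]
where the second equality uses the substitution $j = m^2s - k$ (so $cm\delta_k = cj/m$) and evaluates the resulting harmonic sum. Exponentiating produces the claimed bound $m^K$, with $K$ determined by $c$, $R_0$, and $R_1$. The technical delicacy is the coupling step linking $A_\zeta$ to $A_m$: one must argue carefully that the additional stopping at $\partial\Omega_\zeta$ in the definition of $A_\zeta$ preserves the geometric inclusion guaranteed by the event $\Eps_{(a+1)/m}(m^2s)^c$.
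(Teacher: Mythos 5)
Your proposal follows the paper's own proof essentially step by step: both invoke Lemma \ref{escapetime} to dominate the increments of $S_\zeta$ by Brownian exit times, feed these into Lemma \ref{escapelemma}, bound the ``$a$-side'' uniformly by $O(1/m)$ via the distance from $\zeta$ to the source points and the ``$b$-side'' by the Hele--Shaw separation from Lemma \ref{heleshawdist}, and close with the harmonic sum $\sum a_k b_k = O(\log m)$. The only nit is your opening ``iterated conditioning'' display, where the indicator $\mathbf{1}_{\Eps_{(a+1)/m}(m^2s)^c}$ is not measurable with respect to the intermediate filtrations; the paper sidesteps this by first passing to \emph{deterministic} endpoints $(-c,b_n)$ valid on the good event, so that the dominated exit times $\tau_n(-c,b_n)$ are genuinely independent and the pointwise bound $e^{S_\zeta}\mathbf{1}_{\Eps^c}\le\prod_n e^{\tau_n(-c,b_n)}$ makes the expectation factor cleanly.
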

\begin{proof}
	On the event $\Eps_{(a+1)/m}[t]^c$, we have $A_m(n)\subset (D_{n/m^2})^{(a+1)/m}$ for all $n\leq m^2s$. Since $\zeta\notin (D_{s})^{(4a+2C_2)/m}$, Lemma \ref{heleshawdist} tells us that
	\begin{align*}
		d(\zeta,A_m(n))&\geq \frac{4a+2C_2}{m}+d(\partial D_{s},\partial D_{n/m^2})-\frac{a+1}{m}\\
		&\geq\frac{1}{m}\left(3a+2C_2-1\right)+v\big(\sqrt{1+s}-\sqrt{1+n/m^2}\big),
	\end{align*}
	and thus (using Lemmas \ref{harmoniclemma}(b) and \ref{harmoniclemma2}(b)) that
	\[-\frac{1}{2mR_0}-\frac{1}{mR_1-C_2}\leq H_\zeta(z)-H_\zeta(z_{m,i})\]
	and that
	\[H_\zeta(z)-H_\zeta(z_{m,i})\leq \frac{1}{3a+C_2+mv\big(\sqrt{1+s}-\sqrt{1+n/m^2}\big)-1}+\frac{1}{2mR_0}.\]
	Now, choose $m$ large enough that $mR_1-C_2\geq mR_1/2$, and write $R_2=\min(R_0/2,R_1/4)$. 
	Then we know from Lemma \ref{escapetime} that
	\[(S_\zeta(n)-S_\zeta(n-1))\mathbf{1}_{\Eps_{(a+1)/m}[t]^c}\leq\tau_n(-c,b_n),\]
	with
	\[c=\frac{1}{mR_2},\qquad b_n=\frac{1}{3a+C_2+mv\big(\sqrt{1+s}-\sqrt{1+n/m^2}\big)-1}+\frac{1}{2mR_0}.\]
	Using Lemma \ref{escapelemma} along with the fact that $\tau_n(-c,b_n)$ are independent,
	\[\log\mathbb{E}\left[e^{S_\zeta(m^2s)}\mathbf{1}_{\Eps_{(a+1)/m}[m^2s]^c}\right]=\sum_{n=1}^{m^2s}\log\mathbb{E}e^{\tau_n(-c,b_n)}\leq\sum_{n=1}^{m^2s}\log(1+10cb_n)\leq\sum_{n=1}^{m^2s}10cb_n.\]
	Now, write $r_1=3a+2C_2+mv\sqrt{1+s}-1$, and calculate
	\begin{align*}
		\sum_{n=1}^{m^2s}b_n&\leq \int_0^t\frac{dn}{3a+C_2+mv\big(\sqrt{1+s}-\sqrt{1+n/m^2}\big)-1}+\frac{m^2s}{2mR_0}\\
		&= \int_0^{m^2s}\frac{dn}{r_1-v\sqrt{m^2+n}}+\frac{ms}{2R_0}\\
		&= 2v^{-1}\int_{vm}^{vm\sqrt{1+s}}\frac{xdx}{r_1-x}+\frac{ms}{2R_0}\\
		&= 2v^{-1}\int_{r_1-vm\sqrt{1+s}}^{r_1-mv}\frac{(r_1-y)dy}{y}+\frac{ms}{2R_0}\\
		&\leq 2v^{-1}r_1\log\left(\frac{r_1-mv}{r_1-vm\sqrt{1+s}}\right)+\frac{ms}{2R_0}\\
		& \leq 4v^{-1}m\sqrt{1+s}\log(vm\sqrt{1+s}/C_2)+\frac{s}{2R_0},
	\end{align*}
	so long as $mv\geq 3a+2C_2$. We thus find that
	\[\log\mathbb{E}\left[e^{S_\zeta(t)}\mathbf{1}_{\Eps_{(a+1)/m}[t]^c}\right]\leq \frac{40\sqrt{1+s}}{vR_2}\log(mv\sqrt{1+s}/C_2)+\frac{s}{2R_0R_2}.\]
	The theorem follows with $K>40\sqrt{1+T}/vR_2$.
\end{proof}

\begin{lemma}\label{outsideest}
	Suppose $D_s$ is smooth, and fix $a\geq 2C_2+2$, $\ell\leq a$, and $s\in[0,T]$. For 
	\[m\geq \max(3a+C_2,5a/\inf\nolimits_\zeta R_1)\]
	and $\zeta\in\frac{1}{m}\mathbb{Z}^2\cap \left((D_{s})_{\ell/m}\setminus D_{0}\right)$, we have
	\[\mathbb{E}\left[e^{S_\zeta(m^2s)}\mathbf{1}_{\Eps_{(a+1)/m}(m^2s)^c}\right]\leq m^Ke^{K'a},\]
	where $K$ is as in Lemma \ref{insideest} and $K'
	>0$ is another absolute constant.
\end{lemma}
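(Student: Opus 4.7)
The plan is to split the time interval $[0,m^2s]$ at an intermediate time $\tau'$ just before the crossing time $\tau$ defined by $\zeta\in\partial D_\tau$, applying Lemma \ref{insideest} to the first phase and handling the second directly via the escape-time machinery from Section \ref{martingalesection}.

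First I would use smoothness of the flow to locate $\tau\in(0,s]$ with $\zeta\in\partial D_\tau$, then set $\tau'=\max(0,\tau-C_3a/m)$, choosing $C_3=C_3(v,T)$ large enough that Lemma \ref{heleshawdist} yields $d(\zeta,D_{\tau'})\geq(4a+2C_2)/m$; concretely one can take $C_3=\lceil 8\sqrt{1+T}/v\rceil$. This guarantees $\zeta\notin(D_{\tau'})^{(4a+2C_2)/m}$, so Lemma \ref{insideest} applies with end time $\tau'$. By monotonicity of $\Eps_{(a+1)/m}[\,\cdot\,]$ in $t$, we have $\mathbf{1}_{\Eps(m^2s)^c}\leq\mathbf{1}_{\Eps(m^2\tau')^c}$, and so
\[\mathbb{E}\!\left[e^{S_\zeta(m^2\tau')}\mathbf{1}_{\Eps(m^2s)^c}\right]\leq m^K.\]

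Next, on the remaining interval $(m^2\tau',m^2s]$, I would bound $\mathbb{E}[e^{S_\zeta(m^2s)-S_\zeta(m^2\tau')}\mid\mathscr{F}_{m^2\tau'}]\leq e^{K'a}$ deterministically. The concentrated-mass hypothesis $Q_i^{s_i}\subset\subset\op{int}(D_0)$ together with $\zeta\notin D_0$ gives $d(z_{m,n+1},\zeta)\geq d_0>0$ independent of $\zeta$, so Lemma \ref{harmoniclemma}(c) yields $|H_\zeta(z_{m,n+1})|=O(1/m)$. Combining this with $H_\zeta\geq-1/(2mR_0)$ (Lemma \ref{harmoniclemma}(b)) and the maximum-principle bound $H_\zeta\leq 2$ on $\Omega_\zeta$, Lemma \ref{escapetime} gives
\[a_n\leq\tfrac{1}{2mR_0}+|H_\zeta(z_{m,n+1})|=O(1/m),\qquad a_n+b_n\leq 3\]
for all walks in this phase and all large $m$.

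I would then split phase 2 into a \emph{filling} subphase and an \emph{exit} subphase. In the filling subphase, $A_\zeta$ expands from roughly $D_{\tau'}$ to cover $\Omega_\zeta$, which requires at most $|\Omega_\zeta|m^2-|D_{\tau'}|m^2=O(am)$ walks; with $b_n\leq 2$ and $a_n=O(1/m)$, this contributes $\sum a_nb_n=O(a)$. In the exit subphase, walks exit through $\partial\Omega_\zeta$, where Lemma \ref{harmoniclemma}(b) gives $|H_\zeta|\leq 1/(2mR_0)$ away from $\zeta$, so $b_n=O(1/m)$ and $a_nb_n=O(1/m^2)$, contributing $O(1)$ over the at most $O(m^2)$ exit walks. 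By Lemma \ref{escapelemma},
\[\log\mathbb{E}\!\left[e^{S_\zeta(m^2s)-S_\zeta(m^2\tau')}\mid\mathscr{F}_{m^2\tau'}\right]\leq 10\sum_na_nb_n=O(a).\]
Combining the two phases via the tower property (using that $\mathbf{1}_{\Eps(m^2\tau')^c}$ is $\mathscr{F}_{m^2\tau'}$-measurable) then yields $\mathbb{E}[e^{S_\zeta(m^2s)}\mathbf{1}_{\Eps^c}]\leq m^Ke^{K'a}$.

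The main obstacle will be the careful treatment of the exit subphase: a walk exiting $\Omega_\zeta$ within $O(1/m)$ of $\zeta$ still sees $b_n$ of order $1$, since $H_\zeta(\zeta)\geq 1$. Showing that the aggregate contribution of such exits is $O(1)$ likely requires a harmonic-measure estimate bounding how long $\zeta$ persists on $\partial A_\zeta$ before being absorbed, or a coupling with a simpler exit process that ignores the singular point.
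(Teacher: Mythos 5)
Your overall decomposition matches the paper's: split the time horizon at a point $t_0$ shortly before the crossing time $\tau$, apply Lemma \ref{insideest} on $[0,t_0]$ (giving $m^K$), and control the remaining increments via Lemma \ref{escapetime} and Lemma \ref{escapelemma}. The paper picks $t_0$ so that $D_{t_0/m^2}\subset(D_\tau)_{(4a+2C_2+1)/m}$ and then observes (via Lemma \ref{heleshawdist}) that $m^2s-t_0=O(am)$; this is essentially your choice of $\tau'$, so the first phase of your argument is sound.

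Where you diverge is in the second phase, and this is where there is a genuine gap. You try to split phase two into a ``filling'' subphase and an ``exit'' subphase, claiming $b_n=O(1/m)$ for exit walks because $|H_\zeta|\leq 1/(2mR_0)$ on $\partial\Omega_\zeta\setminus\{\zeta\}$. This is not correct: by Lemma \ref{harmoniclemma}(b,d), $\zeta$ itself is always on $\partial\Omega_\zeta$ (hence always a possible absorbing site for the modified walk), and $H_\zeta(\zeta)\geq 1$. Since $b_n$ is the maximum of $H_\zeta(z)-H_\zeta(z_{m,n+1})$ over all possible stopping points, we get $b_n\geq 1-O(1/m)$ for \emph{every} walk in phase two, not just those near $\zeta$. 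Your ``main obstacle'' paragraph correctly flags this, but the proposed repairs (harmonic-measure persistence estimate, coupling) are unnecessary detours.

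The paper resolves this much more simply by not attempting the filling/exit split at all. From Lemma \ref{harmoniclemma2}(b), for $mR_1\geq 5a$ one has $H_\zeta(z)-H_\zeta(z_{m,n+1})\leq 2+\tfrac{1}{mR_1-C_2}\leq 5/2$ uniformly on $\Omega_\zeta$, while $a_n\leq 1/mR_2$ as you noted. So Lemma \ref{escapetime} gives $S_\zeta(n)-S_\zeta(n-1)\leq\tau_n(-1/mR_2,5/2)$ for \emph{all} $n>t_0$, and Lemma \ref{escapelemma} yields $\log\mathbb{E}e^{\tau_n(-1/mR_2,5/2)}\leq\log(1+25/mR_2)=O(1/m)$. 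Multiplying by the count $m^2s-t_0=O(am)$ of phase-two walks gives $O(a)$ total, i.e.\ the $e^{K'a}$ factor. In short: you do not need $b_n$ small, only $b_n=O(1)$; the $e^{K'a}$ comes from having $a_n=O(1/m)$ together with only $O(am)$ walks in phase two. Replacing your filling/exit decomposition with this single uniform estimate closes the gap and recovers the paper's proof.
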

\begin{proof}
	Recall that $R_1=\inf_{z_{m,i}}d(z_{m,i},\zeta)$. Since $mR_1\geq 5a$ and $a\geq 2C_2+2$, we can choose $t_0\geq 1$ such that $D_{t_0/m^2}\subset (D_{\tau})_{(4a+2C_2+1)/m}$. By Lemma \ref{heleshawdist}, we can take $t_0$ to satisfy
	\[m\sqrt{1+s}-\sqrt{m^2+t_0}\leq 2v^{-1}(4a+2C_2+1),\]
	and thus
	\[m^2s-t_0 \leq 2m\sqrt{1+T}(m\sqrt{1+s}-\sqrt{m^2+t_0})\leq 4v^{-1}m\sqrt{1+T}(4a+2C_2+1),\]
	Further, since $\zeta\notin (D_{t_0/m^2})^{(4a+2C_2)/m}$, Lemma \ref{insideest} gives
	\[\mathbb{E}\left[e^{S_\zeta(t_0)}\mathbf{1}_{\Eps_{a/m}(t_0)^c}\right]\leq\mathbb{E}\left[e^{S_\zeta(t_0)}\mathbf{1}_{\Eps_{(a+1)/m}(t_0)^c}\right]\leq m^K.\]
	As discussed in the proof of Lemma \ref{insideest}, for $m\geq 2C_2/\inf\nolimits_\zeta R_1$, we have
	\[-\frac{1}{mR_2}\leq H_\zeta(z)-H_\zeta(z_{m,i}),\]
	with $R_2=\min(R_0/2,R_1/4)$. We also know that $H_\zeta(z)-H_\zeta(z_{m,i})\leq 2+\frac{1}{mR_1-C_2}\leq 5/2$ from Lemma \ref{harmoniclemma2}(b), so we get
	\[S_\zeta(n)-S_\zeta(n-1)\leq\tau_n(-1/mR_2,5/2)\]
	and thus
	\begin{align*}
		\log\mathbb{E}\left[e^{S_\zeta(t)}\mathbf{1}_{\Eps_{(a+1)/m}(t)^c}\right]&=\log\mathbb{E}\left[e^{S_\zeta(t_0)}\mathbf{1}_{\Eps_{(a+1)/m}(t_0)^c}\right]\\
		&\qquad+\sum_{n=t_0+1}^{m^2s}\log\mathbb{E}\left[e^{S_\zeta(n)-S_\zeta(n-1)}\mathbf{1}_{\Eps_{(a+1)/m}(m^2s)^c}\right]\\
		&\leq K\log m+\sum_{n=t_0+1}^{m^2s}\log\mathbb{E}\left[e^{\tau_n(-1/mR_2,5/2)}\right]\\
		&\leq K\log m+(m^2s-t_0)\log(1+25/mR_2)\\
		&\leq K\log m+100v^{-1}\sqrt{1+T}(4a+2C_2+1)/R_2\\
		&\leq K\log m+100v^{-1}\sqrt{1+T}(4a+2C_2+1)/R_2.
	\end{align*}
	Now, $2C_2+1\leq a$ by hypothesis, so the claim follows.
\end{proof}
	\subsection{First estimate}\label{firstestsection}
Choose constants 
\[C_3=\max\left(\frac{24VC_2}{vb},\frac{72V}{vb},3/c_0\right),\]
\[\alpha = \frac{v^2b}{288UV^2K''}.\]

\begin{lemma}\label{firstest}
	For large enough $m$, $s\in[0,T]$, $3a+C_2\geq a\geq C_3m^{2/5}$, and $\ell\leq \alpha a$, we have
	\[\mathbb{P}(\Eps_{a/m}[m^2s]\cap\mathcal{L}_{\ell/m}[m^2s]^c)\leq e^{-2m^{2/5}}.\]
\end{lemma}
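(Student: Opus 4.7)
My plan is to union-bound over candidate early points $\zeta \in \frac{1}{m}\mathbb{Z}^2 \cap (D_T \setminus D_0)$ and, for each, use the martingale $M_\zeta$ of Section \ref{martingalesection} together with Lemma \ref{tentacles} to bound the probability that $\zeta$ becomes $a/m$-early while no point is $\ell/m$-late. Since there are $O(m^2)$ candidates, it suffices to show each such event has probability at most $e^{-3m^{2/5}}$, and then apply the union bound. Fix $\zeta$ with associated time $\tau$ such that $\zeta \in \partial D_\tau$, and let $\tau^*_\zeta$ be the first (random) time at which $\zeta \in A_m$; the early event at $\zeta$ is $\{\tau^*_\zeta \le m^2 s\} \cap \{\zeta \notin (D_{\tau^*_\zeta/m^2})^{a/m}\}$.

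The key step is to show that, on the intersection of the early event at $\zeta$, the no-late event $\mathcal{L}_{\ell/m}[m^2 s]^c$, and the thin-tentacles event of Lemma \ref{tentacles} applied at $\zeta$ with radius $r = a/(3m)$ (whose failure probability is $\le C_0 e^{-c_0 a/3} \le C_0 e^{-m^{2/5}}$ once $C_3 \ge 3/c_0$), one has $\sup_{t \le m^2 s} M_\zeta(t) \ge c_1 a$ for some absolute $c_1 > 0$. Using Lemma \ref{harmoniclemma2}(c) and the quadrature property to collapse $\sum_i H_\zeta(z_{m,i})$ against $\sum_{z \in D_{t/m^2}} H_\zeta(z)$, the martingale can be rewritten as
\[ M_\zeta(t) = \sum_{z \in A_m(t)\setminus D_{t/m^2}} H_\zeta(z) - \sum_{z \in D_{t/m^2}\setminus A_m(t)} H_\zeta(z) + O(\log m). \]
Under $\mathcal{L}_{\ell/m}^c$, the second sum is supported in a boundary strip of width $\ell/m$ whose points all lie at distance $\ge a/m$ from $\zeta$; a dyadic annulus decomposition combined with the decay $|H_\zeta(z)| \le C/(m|z-\zeta|)$ (Lemma \ref{harmoniclemma2}(b)) bounds it by $O(\ell \log(Um/a))$, which is $o(a)$ for $\alpha$ small enough. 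The first sum receives the $\ge ba^2/9$ lattice points from $A_m(\tau^*_\zeta m^2) \cap B(\zeta, a/3m) \subset A_m \setminus D_{\tau^*_\zeta/m^2}$ supplied by thin tentacles.

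\textbf{The main obstacle} is turning this $\gtrsim a^2$ count into a $\gtrsim a$ lower bound on the first sum, since $H_\zeta$ changes sign across the hyperplane $\{z : (z-\zeta)\cdot \unit{n}= 0\}$. The geometric heuristic is that the cluster of early points forms a connected tentacle emanating from $D_{\tau^*_\zeta/m^2}$ (which lies on the $-\unit{n}$ side of $\zeta$), and hence resides predominantly in the half of $B(\zeta, a/3m)$ where $H_\zeta>0$ with magnitude $\sim 1/a$. To make this rigorous, I expect to combine the inside-tangent-disk construction $B^+$ of Section \ref{kernelsection} (on which $H_\zeta$ admits uniform positive lower bounds of order $1/(mR_0)$) with a suitably recentered application of Lemma \ref{tentacles}, for instance at a point $\zeta - r'\unit{n}$ inside $B^+$, so that the recovered mass demonstrably lies in the positive region.

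With $\sup_{t \le m^2 s} M_\zeta(t) \ge c_1 a$ established, write $M_\zeta(t) = B_\zeta(S_\zeta(t))$ by Dubins--Schwarz and estimate, for any $u_0 > 0$,
\[ \mathbb{P}\!\left(\sup_{t \le m^2 s} M_\zeta(t) \ge c_1 a\right) \le \mathbb{P}\!\left(\sup_{u \le u_0} B_\zeta(u) \ge c_1 a\right) + \mathbb{P}(S_\zeta(m^2 s) > u_0), \]
which by Lemma \ref{escapelemma2} and Markov's inequality applied to Lemma \ref{outsideest} (or Lemma \ref{insideest} when $\zeta \notin (D_s)_{\ell/m}$) is at most $2 e^{-c_1^2 a^2/(2 u_0)} + e^{-u_0} m^K e^{K'a}$. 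Choosing $u_0 = \Theta(a)$ and taking $C_3$ large in terms of $c_1, K, K'$ makes each term $\le e^{-4 m^{2/5}}$. Two subtleties remain: (i) the $\Eps_{(a+1)/m}^c$ conditioning in the $E[e^{S_\zeta}]$ bounds, which can be handled by treating the first appearance of an $a/m$-early point as a stopping time and noting $\Eps_{a/m}^c \subseteq \Eps_{(a+1)/m}^c$ up to that time; and (ii) the check that the coupling $A_m = A_\zeta$ persists until that moment, which follows from Lemma \ref{harmoniclemma2}(a) and the no-late hypothesis. A final union bound over the $O(m^2)$ candidates $\zeta$ then yields the claimed $e^{-2 m^{2/5}}$.
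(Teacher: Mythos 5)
You have correctly identified the central difficulty---$H_\zeta$ changes sign across the line $\{(w-\zeta)\cdot\unit{n}=0\}$, so the thin-tentacles ball centered at the early point straddles both signs---but your proposed remedy does not work. Recentering Lemma \ref{tentacles} at a point $\zeta - r'\unit{n}$ requires that point to lie in $A_m(t)$ and at distance $\geq r$ from $D_0$, neither of which you can guarantee. The paper resolves the obstacle by moving the \emph{pole}, not the ball: for an early point $z$ at distance $\approx a/m$ outside $D_{t/m^2}$, the pole $\zeta = \zeta(z,t)$ is chosen from the annulus of Equation \ref{annulus}, at distance $\approx V(4a+2C_2)/mv$ outside $D_{t/m^2}$. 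This accomplishes two things at once: the entire ball $B(z, a/m)$ lies strictly on the positive side of $F_\zeta$, with $F_\zeta \geq v/(5Va)$ there, so the thin-tentacles mass contributes $\gtrsim a$ with no cancellation; and $\zeta$ now satisfies the hypothesis $\zeta\notin(D_{t/m^2})^{(4a+2C_2)/m}$ of Lemma \ref{insideest}, which your choice $\zeta=$ the early point does not (an $a/m$-early point is only $\approx a/m$ outside the domain, so neither Lemma \ref{insideest} nor Lemma \ref{outsideest} applies directly to it, and the stopping-time patch in your item (i) does not repair the hypothesis).

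There is a second, more quantitative gap: your bound $O(\ell\log(Um/a))$ on $\sum_{D_{t/m^2}\setminus A_m(t)} H_\zeta$ is not $o(a)$ for fixed $\alpha$. With $\ell\leq\alpha a$ and $a\asymp m^{2/5}$ this is $\alpha a\log m$, which eventually dominates the order-$a$ positive contribution. The paper instead partitions $A_\zeta(t)$ into $A_1=\tfrac{1}{m}\mathbb{Z}^2\cap D_{t_0/m^2}$, $A_2 = A_\zeta(t)\cap B_{a/m}(z)$, and $A_3$; on the no-late event $A_1$ is full, so its sum cancels against the source terms via Lemma \ref{harmoniclemma2}(c) up to $O(\log m)$, while $A_3$ has only $O(\ell m)$ points and is controlled by the \emph{uniform} lower bound $H_\zeta(z')-H_\zeta(z_{m,i})\geq -1/(mR_2)$, yielding $O(\ell)$. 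That is, using the uniform $\Theta(1/m)$ bound, rather than the pointwise decay $1/(m|z-\zeta|)$ integrated over dyadic annuli, avoids the extra logarithm. Your broader scaffolding---Dubins--Schwarz, Lemma \ref{escapelemma2} combined with Markov on $\mathbb{E}[e^{S_\zeta}]$, a union bound over candidates---does match the paper's, but the pole placement and the $A_1\cup A_3$ accounting are essential and are the steps your write-up currently lacks.
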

\begin{proof}[Step 1]\let\qed\relax
	For each integer $1\leq t\leq m^2s$ and each lattice point $z$, let
	\[Q_{z,t}=\{z\in A_m(t)\setminus A_m(t-1)\}\cap\{z\notin (D_{t/m^2})^{a/m}\}\cap\Eps_{a/m}[t-1]^c\]
	be the event wherein $z$ first joins the cluster at time $t$ and is the first $a/m$-early point. Now,
	\[\bigcup_{t\leq m^2s}\bigcup_{z\in (D_0)^{T/m}}Q_{z,t}=\Eps_{a/m}[m^2s].\]
	Fix $z\in \frac{1}{m}\mathbb{Z}^2\setminus (D_{t/m^2})^{a/m}$, and let $\zeta=\zeta(z,t)$ be the nearest point to $z$ in the annulus
	\begin{equation}\label{annulus}
		\frac{1}{m}\mathbb{Z}^2\cap(D_{t/m^2})^{V(4a+2C_2)/mv+2/m}\setminus(D_{t/m^2})^{V(4a+2C_2)/mv}.
	\end{equation}
	Since $\zeta\notin (D_{t/m^2})^{(4a+2C_2)/m}$, we have by Lemma \ref{insideest} that
	\[\mathbb{E}\left[e^{S_\zeta(t)}\mathbf{1}_{\Eps_{(a+1)/m}(t)^c}\right]\leq m^K.\]
	Let $M=6m^{2/5}$, so that Markov's inequality gives
	\[\mathbb{P}\left(\Eps_{(a+1)/m}(t)^c\cap\{S_\zeta(t)>M\}\right)\leq e^{-6m^{2/5}}\mathbb{E}\left[e^{S_\zeta(t)}\mathbf{1}_{\Eps_{(a+1)/m}(t)^c}\right]\leq e^{-3m^{2/5}}.\]
	Now, since $A_m(t-1)\subset (D_{t/m^2})^{a/m}$ and $z$ is adjacent to $A_m(t-1)$, we must have $z\in (D_{t/m^2})^{(a+1)/m}$. Thus, $Q_{z,t}\subset\Eps_{(a+1)/m}(t)^c$, so
	\[\mathbb{P}\left(Q_{z,t}\cap\{S_\zeta(t)>M\}\right)\leq e^{-3m^{2/5}}.\]
\end{proof}
\begin{proof}[Step 2]\let\qed\relax
	On the event $Q_{z,t}$, we know that 
	\begin{equation}\label{doesnthitboundaryeq}
		A_m(t)\subset (D_{t/m^2})^{(a+1)/m},
	\end{equation}
	as no points are $(m+1)$-early. However, we also know that 
	\[d_H(D_{t/m^2},D_{\tau})\geq d(D_{t/m^2},\zeta)\geq V(4a+2C_2)/mv,\]
	which implies by Lemma \ref{heleshawdist} that
	\[d(D_{t/m^2},D_{\tau}^c)\geq\frac{v}{V}d_H(D_{t/m^2},D_{\tau})\geq(4a+2C_2)/m.\]
	In turn, Equation \ref{doesnthitboundaryeq} implies that
	\[d(A_m(t),D_{\tau}^c)\geq d((D_{t/m^2})^{(a+1)/m},D_{\tau}^c)\geq d(D_{t/m^2},D_{\tau}^c)-(a+1)/m\geq(3a+2C_2-1)/m.\]
	This means that $A_m(t)\subset D_{\tau}$, and thus that $A_m(t)$ does not meet $\partial\Omega_\zeta$ by Lemma \ref{harmoniclemma2}(a). This means that we can replace $A_m(t)$ by $A_\zeta(t)$, which we partition as
	\[A_1=A_\zeta(t)\cap D_{t_0/m^2},\qquad A_2=A_\zeta(t)\cap B_{a/m}(z),\qquad A_3=A_\zeta(t)\setminus (A_1\cup A_2),\]
	where $t_0$ is chosen such that $D_{t_0/m^2}\subset(D_{t/m^2})_{\ell/m}$. By Lemma \ref{heleshawdist}, we can satisfy $d_H(D_{t_0/m^2},D_{t/m^2})\leq 2V\ell/mv$, or
	\[(D_{t/m^2})_{2V\ell/mv}\subset D_{t_0/m^2}.\]
	On the event $\mathcal{L}_{\ell/m}[t]^c$, no point in $(D_{t/m^2})_{\ell/m}$ is left out of $A_\zeta(t)$, so $A_1=\frac{1}{m}\mathbb{Z}^2\cap D_{t_0/m^2}$. Since $A_\zeta(t)$ has $t$ points and $A_1$ has at least $t-4\ell UVv^{-1}m$ points (using Lemma \ref{heleshawlength}), we know that $\#(A_2\cup A_3)\leq 4\ell UVv^{-1}m$. Noting that $H_\zeta(z')-H_\zeta(z_{m,i})\geq -1/mR_2$ for any $z'\in\Omega_\zeta$, this implies
	\[\sum_{z'\in A_3}\left(H_\zeta(z')-H_\zeta(z_{m,i(z')})\right)\geq -\frac{\#A_3}{mR_2}\geq -\frac{4\ell UVm}{mvR_2}=-\frac{4\ell UV}{vR_2},\]
	where $z_{m,i(z')}$ is the source point that initially generated the point $z'\in A_m(t)$. Next, we try to estimate the equivalent sum over $A_1$. By the discussion above, we know that only $4\ell UVv^{-1}m$ points can be outside the bounds of $A_1$, meaning that $\{z_{m,i(z')}\;|\;z'\in A_1\}$ differs from $\{z_{m,i}\;|\;0\leq i\leq t_0\}$ by at most $4\ell UVv^{-1}m$ points. Along with Lemma \ref{harmoniclemma2}(c), this implies
	\begin{align*}
		\sum_{z'\in A_1}(H_\zeta(z')-H_\zeta(z_{m,i(z')}))&\geq\sum_{z'\in A_1}H_\zeta(z')-\sum_{i=1}^{t_0}H_\zeta(z_i)-8\ell UVv^{-1}m\max|H_\zeta(z_{m,i})|\\
		&\geq-C'_2\log m-8UVK''\ell/v\\
		&\geq-C'_2a/C_3-8UVK''\ell/v.
	\end{align*}
	Adding up the contributions from $A_1$ and $A_3$ gives
	\begin{equation}\label{contributionseq}\sum_{z'\in A_1\cup A_3}(H_\zeta(z')-H_\zeta(z_{m,i(z')}))\geq-C'_2a/C_3-12UVK''\ell/v\geq -vba/12V,
	\end{equation}
	from the definitions of $C_3$ and $\ell$ above.
	
	Now, since $z\notin (D_{t/m^2})^{a/m}\supset(D_0)^{a/m}$, Lemma \ref{tentacles} tells us that
	\[\mathbb{P}\left(Q_{z,t}\cap\mathcal{L}_{\ell/m}[t]^c\cap\{\#A_2\leq ba^2\}\right)\leq C_0e^{-c_0a}\leq C_0e^{-3m^{2/5}}\]
	for large enough $m$, using the facts that $a\geq C_3 m^{2/5}$ and $C_3\geq 3/c_0$.
	
	On the event $Q_{z,t}$, the point $z$ is $a/m$ early but not $(a+1)/m$-early, so $a/m\leq d(z,D_{t/m^2})\leq (a+1)/m$. We know that $\zeta$ is the nearest point to $z$ in the annulus of Equation \ref{annulus}, which means that (for $a>2C_2$) we have $md(z,\zeta)\leq 5Va/v$. Then $F_\zeta(z)= v/5Va+O(a^{-2})$, and so by Lemma \ref{harmoniclemma}, for all $z'\in B(z,a)$,
	\[H_\zeta(z')-H_\zeta(z_{m,i(z')})\geq \frac{v}{5Va}-\frac{2}{mR_1}+O(a^{-2})\geq \frac{v}{6Va},\]
	as long as $m$ (and hence $a$) is large enough. On the event $\{\#A_2> ba^2\}$, this means
	\[\sum_{z'\in A_2}(H_\zeta(z')-H_\zeta(z_{m,i(z')}))\geq \frac{v\# A_2}{6Va}>\frac{vba}{6V},\]
	and hence (from Equation \ref{contributionseq}) on event $\{\#A_2> ba^2\}\cap\mathcal{L}_{\ell/m}[t]^c\cap Q_{z,t}$,
	\[M_\zeta(t)=\sum_{z'\in A_\zeta(t)}(H_\zeta(z')-H_\zeta(z_{m,i(z')}))>\frac{vba}{6V}-\frac{vba}{12V}=\frac{vba}{12V}.\]
	Thus, 
	\[\left\{M_\zeta(t)\leq vba/12V\right\}\subset\{\#A_2\leq ba^2\}\cup \left(\mathcal{L}_{\ell/m}[t]^c\cap Q_{z,t}\right)^c,\]
	and so
	\[\mathbb{P}(\{M_\zeta(t)<vba/12V\}\cap\mathcal{L}_{\ell/m}[t]^c\cap Q_{z,t})\leq\mathbb{P}\left(Q_{z,t}\cap\mathcal{L}_{\ell/m}[t]^c\cap\{\#A_2\leq ba^2\}\right)\leq C_0e^{-3m^{2/5}}.\]
\end{proof}
\begin{proof}[Step 3]
	Since $C_3\geq\frac{72V}{vb}$, we know that
	\[\frac{vba}{12V}\geq \frac{vbC_3}{12V}m^{2/5}\geq6m^{2/5}=M,\]
	with $M$ as in part 1. Using Lemma \ref{escapelemma2}, we find
	\[\mathbb{P}\left(\{S_\zeta\leq M\}\cap\{M_\zeta\geq vba/12V\}\right)\leq \mathbb{P}\left(\{S_\zeta\leq M\}\cap\{B_\zeta(S_\zeta)\geq M\}\right)\leq e^{-M/2}=m^{-3m^{2/5}}.\]
	Finally, we bound
	\begin{align*}
		\mathbb{P}(Q_{z,t}\cap\mathcal{L}_{\ell/m}[m^2s]^c)&\leq\mathbb{P}(Q_{z,t}\cap\{S_\zeta>M\})\\
		&\qquad+\mathbb{P}(Q_{z,t}\cap\{M_\zeta(t)\leq vba/12V\}\cap\mathcal{L}_{\ell/m}[t]^c)\\
		&\qquad+\mathbb{P}(\{S_\zeta\leq M\}\cap\{M_\zeta(t)\geq vba/12V\})\\
		&\leq (C_2+2)e^{-3m^{2/5}},
	\end{align*}
	which implies
	\begin{align*}
		\mathbb{P}(\Eps_{a/m}[m^2s]\cap\mathcal{L}_{\ell/m}[m^2s]^c)&=\sum_{t=1}^{\lfloor m^2s\rfloor}\sum_{z\in (D_0)^{ms}}\mathbb{P}(Q_{z,t}\cap\mathcal{L}_{\ell/m}[m^2s]^c)\\
		&\leq m^2s\cdot 10m^2(\op{diam}(D_0)+ms)^2 (C_2+2)e^{-3m^{2/5}}\leq m^{-2m^{2/5}}.
	\end{align*}
\end{proof}
	\section{Late points imply early points}
Very roughly, we would like the proof of the second part of Theorem \ref{biggun'} to go as follows. If $\zeta$ is the first $(\ell/m)$-late point in $A_\zeta(t)$, then at the time $T\sim m^2\tau+m\ell$, the set $A_\zeta(t)$ has several particles at every boundary point $z\neq\zeta$ in $\partial \Omega_\zeta$. Since $H_\zeta(\zeta)$ is much larger than $H_\zeta(z\neq\zeta)$, this would tell us in turn that $M_\zeta$ would have a much lower value than expected. Combined with Lemmas \ref{insideest} and \ref{outsideest}, we would be able to recover a strong upper bound of the probability of $\mathcal{L}_{\ell/m}[T]\cap \mathcal{E}_{a/m}[T]^c$.

Unfortunately, we are unable to say that the difference $H_\zeta(z)-H_\zeta(z_{m,i})$ that occurs in the expression for $M_\zeta$ is even negative, let alone a large negative number. The problem that occurs in the general source (i.e., non-disk) setting is that we cannot obtain a positive lower bound on $H_\zeta(z_{m,i})$, as the source point $z_{m,i}$ may be ``behind'' the pole $\zeta$, as shown in Figure \ref{behindfig}. 
\begin{figure}[H]
	\centering
	\begin{tikzpicture}
		\node[anchor=south west,inner sep=0] (image) at (0,0) {\includegraphics[scale=.36]{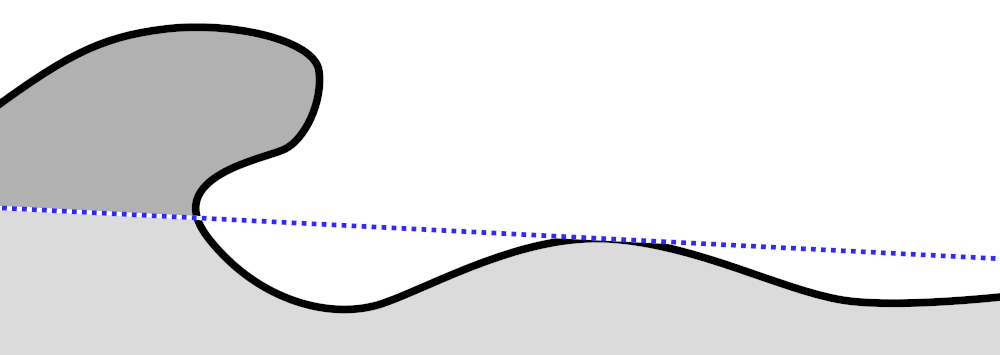}};
		\node[text width=2cm,rotate=0] at (8.6,1.8) {$\zeta$};
		\node[text width=2cm,rotate=0] at (2.15,1) {$\mathbf{H_\zeta<0}$};
		\node[text width=2cm,rotate=0] at (2.15,2.5) {$\mathbf{H_\zeta>0}$};
	\end{tikzpicture}
	\caption{The original harmonic $H_\zeta$ is negative on a half-plane cut out by the pole $\zeta$. While this does not come into play in the case of a point-source, it is critical in extended-source IDLA, forcing us to define a new harmonic function $\tilde{H}_\zeta$ to continue with the proof.}\label{behindfig}
\end{figure}

To remedy this issue, we introduce a second harmonic function $\tilde{H}_\zeta$, defined to be the discrete Poisson kernel on a slightly modified domain $\tilde{\Omega}_\zeta\approx \Omega_\zeta$. We will see that the difference $\tilde{H}_\zeta(z)-\tilde{H}_\zeta(z_{m,i})$ is negative and bounded away from zero, so our program will go through roughly as mentioned above.

On the other hand, we will not be able to get a strong replacement for Lemma \ref{harmoniclemma}(c), which tells us that $H_\zeta$ closely approximates a continuum harmonic function. This leads to an overall $m^{2/5}$ error---rather than the logarithmic errors we saw in Lemma \ref{harmoniclemma2}(c)---when summing $\tilde{H}_\zeta$ over the set $D_s$, and it eventually creates the $m^{-3/5}$ error of Theorem \ref{biggun'}.

\subsection{The Poisson kernel on $\Omega_\zeta$}
We introduce a new, positive harmonic function on the new set
\[\tilde{\Omega}_\zeta=\left(D_\tau\cap\frac{1}{m}\mathbb{Z}^2\right)\setminus\{\zeta\pm iu\;|\;u\in(0,1/m^2)\}.\]
Namely, if $W_z(t)$ is a (grid) Brownian motion in $\tilde{\Omega}_\zeta$ starting at $z$, and $\tau^*$ is the first exit time of $W_\zeta(t)$ from $\tilde{\Omega}_\zeta$, we define
\[\tilde{H}_\zeta(z):=\mathbb{P}[W_z(\tau^*)=\zeta].\]
We can recognize this as the Poisson kernel associated to the set $\tilde{\Omega}_\zeta$. In particular, it satisfies the following key properties:
\begin{lemma}\label{harmoniclemma2-1}For any $m$, $\tilde{H}_\zeta$ satisfies the following:
	\begin{enumerate}[label=\emph{(\alph*)}]
		\item $\tilde{H}_\zeta$ is grid harmonic in $\tilde{\Omega}_\zeta$, and $H_\zeta\geq 0$.
		\item $\tilde{H}_\zeta(\zeta)=1$. For all $z\in\partial\tilde{\Omega}_\zeta\setminus\{\zeta\}$, we have $\tilde{H}_\zeta(z)=0$.
		\item For any $U\subset\tilde{\Omega}_\zeta$ with $md(\zeta,U)>C_2$,
		\[\tilde{H}_\zeta(z)\leq\frac{1}{2mR_0}+\frac{1}{md(\zeta,U)-C_2}.\]
		\item Let $\zeta'=\zeta-1/m\in\tilde{\Omega}_\zeta$. Then
		\[\tilde{H}_\zeta(z)=c_\zeta G_{\tilde{\Omega}_\zeta}(\zeta',z)\]
		on $\tilde{\Omega}_\zeta\setminus B_{1/m}(\zeta)$, where $1/16\leq c_\zeta\leq 1$ and
		\[G_{\tilde{\Omega}_\zeta}(y,z):=\mathbb{E}g(W_z(\tau^*)-y)-g(z-y)\]
		is the Green's function associated to $\tilde{\Omega}_\zeta$.
	\end{enumerate}
\end{lemma}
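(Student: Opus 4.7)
The plan is to recognize $\tilde{H}_\zeta$ as the harmonic measure on $\partial\tilde{\Omega}_\zeta$ of the singleton $\{\zeta\}$, evaluated from the interior point $z$, and then dispatch the four claims in order of increasing difficulty.

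Parts (a) and (b) are essentially immediate from the definition. For (a), grid-harmonicity of a hitting probability at an interior lattice point $z$ is the strong Markov property applied to the first lattice point visited by $W_z$, which assigns weight $\tfrac{1}{4}$ to each of the four grid neighbors; non-negativity is trivial. For (b), a grid Brownian motion started on $\partial\tilde{\Omega}_\zeta$ has $\tau^{*}=0$ and exits at its starting point, so $\tilde{H}_\zeta(\zeta)=1$ and $\tilde{H}_\zeta(z)=0$ for every other boundary $z$.

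For (c), I would compare $\tilde{H}_\zeta$ with $\phi(z):=H_\zeta(z)+\tfrac{1}{2mR_0}$ via the discrete maximum principle. By Lemma \ref{harmoniclemma}(a), $\phi\geq 0$ throughout $\Omega_\zeta\supseteq\tilde{\Omega}_\zeta$. Because $\unit{n}$ points east-northeast, for large $m$ both $\zeta+1/m$ and $\zeta+(1+i)/m$ lie outside $D_\tau$ and hence outside $\tilde{\Omega}_\zeta$, so $\phi$ is grid-harmonic on the interior of $\tilde{\Omega}_\zeta$. At $\zeta$ itself, $\phi(\zeta)\geq 1=\tilde{H}_\zeta(\zeta)$ by Lemma \ref{harmoniclemma}(d), and $\phi\geq 0=\tilde{H}_\zeta$ on every other boundary point. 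The maximum principle then forces $\tilde{H}_\zeta\leq\phi$ throughout $\tilde{\Omega}_\zeta$, and feeding in the bound on $H_\zeta$ from Lemma \ref{harmoniclemma2}(b) yields the stated estimate.

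Part (d) is the main content of the lemma, and I expect it to be the main obstacle. The crucial geometric observation is that the cut $\{\zeta\pm iu\;|\;u\in(0,1/m^2)\}$ severs the vertical edges of $\mathcal{G}_m$ meeting $\zeta$, while the east-northeast orientation of $\unit{n}$ forces $\zeta+1/m$ to lie outside $D_\tau$ for large $m$; consequently $\zeta'$ is the unique neighbor of $\zeta$ in $\tilde{\Omega}_\zeta$ from which grid Brownian motion can reach $\zeta$ directly. Every path that exits at $\zeta$ must therefore first pass through $\zeta'$, so the strong Markov property applied at the first-passage time $\tau_{\zeta'}$ gives the factorization
\[
\tilde{H}_\zeta(z)=\mathbb{P}_z[\tau_{\zeta'}<\tau^{*}]\cdot\tilde{H}_\zeta(\zeta').
\]
Decomposing the expected number of visits to $\zeta'$ by the same event produces the parallel identity
\[
G_{\tilde{\Omega}_\zeta}(\zeta',z)=\mathbb{P}_z[\tau_{\zeta'}<\tau^{*}]\cdot G_{\tilde{\Omega}_\zeta}(\zeta',\zeta'),
\]
and dividing establishes the proportionality with $c_\zeta:=\tilde{H}_\zeta(\zeta')/G_{\tilde{\Omega}_\zeta}(\zeta',\zeta')$. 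The bounds on $c_\zeta$ reduce to a direct analysis of grid Brownian motion started at $\zeta'$: the probability that its first horizontal excursion lands immediately at $\zeta$ is exactly $1/4$, lower-bounding $\tilde{H}_\zeta(\zeta')$, while an upper bound on $G_{\tilde{\Omega}_\zeta}(\zeta',\zeta')=\mathbb{E}g(W_{\zeta'}(\tau^{*})-\zeta')$ via the asymptotic expansion of $g$ controls the denominator. The bookkeeping required to combine these estimates into the clean constants $1/16$ and $1$, taking into account the convention whereby $g$ lives on $\mathbb{Z}^2$ while the walk lives on $\tfrac{1}{m}\mathbb{Z}^2$, is where I expect the most careful technical work.
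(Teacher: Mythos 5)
Your proposal is correct and follows essentially the same route as the paper. Parts (a) and (b) are immediate from the definition in both accounts, and part (c) is the same maximum-principle comparison against $H_\zeta + 1/(2mR_0)$ using Lemma \ref{harmoniclemma}(a,d) on $\partial\tilde{\Omega}_\zeta$ followed by Lemma \ref{harmoniclemma2}(b) on $U$; you are slightly more explicit about why the comparison function is grid-harmonic on the interior of $\tilde{\Omega}_\zeta$, but the argument is the paper's.

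For (d), the paper's proof is a one-line citation of the last-exit decomposition for simple random walks (Lawler and Limic, Prop.~4.6.4), whereas you reconstruct that decomposition by hand. The content of the cited result here is precisely your pair of strong-Markov factorizations: the last-exit decomposition reads
\[
\tilde{H}_\zeta(z) \;=\; \sum_{\substack{w \sim \zeta \\ w \in \tilde{\Omega}_\zeta}} G_{\tilde{\Omega}_\zeta}(w,z)\, p(w,\zeta),
\]
and the geometric observation you correctly single out as the crux --- that the cut severs the two vertical approaches to $\zeta$ while the east-northeast normal pushes $\zeta+1/m$ outside $D_\tau$, collapsing the sum to the single term $w=\zeta'$ --- is exactly what makes this a clean proportionality. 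The paper leaves that observation implicit behind the citation. Your identification $c_\zeta = \tilde{H}_\zeta(\zeta')/G_{\tilde{\Omega}_\zeta}(\zeta',\zeta')$ agrees with the last-exit form, and your planned route to the numerical bounds (first-step lower bound for $\tilde{H}_\zeta(\zeta')$, potential-kernel asymptotics to control $G_{\tilde{\Omega}_\zeta}(\zeta',\zeta')$) is the natural one; the paper does not spell these bounds out either, so you are at no disadvantage on rigor. The technical points you flag at the end --- the rescaling between $g$ defined on $\mathbb{Z}^2$ and the walk living on $\tfrac{1}{m}\mathbb{Z}^2$, and the grid Brownian motion being killed at the non-lattice cut rather than behaving as a pure simple random walk near $\zeta$ --- are indeed where the careful bookkeeping lives and why $c_\zeta$ is pinned to an interval rather than a single explicit value.
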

\begin{proof}$ $\medskip
	
	\noindent (a,b) The first two points follow from the definition of $\tilde{H}_\zeta$.
	
	\noindent(c) From \ref{harmoniclemma}(a,d), we know that 
	\[H_\zeta+\frac{1}{2mR_0}\geq 0\]
	on all of $\tilde{\Omega}_\zeta\subset\Omega_\zeta$, and that $H_\zeta(\zeta)+\frac{1}{2mR_0}\geq 1$. In particular, 
	\[H_\zeta+\frac{1}{2mR_0}\geq \tilde{H}_\zeta\]
	on the boundary of $\tilde{\Omega}_\zeta$, so we know from the maximum principle and Lemma \ref{harmoniclemma2}(b) that
	\[\tilde{H}_\zeta|_U\leq \left[H_\zeta+\frac{1}{2mR_0}\right]_U\leq\frac{1}{2mR_0}+\frac{1}{md(\zeta,U)+C_2}.\]
	
	\noindent(d) This follows from the last-exit decomposition for simple random walks \cite[Prop.~4.6.4]{lawler2010random}.
\end{proof}

\begin{lemma}\label{harmoniclemma2-2}Suppose $D_s$ is smooth. Then,
	\begin{enumerate}[label=\emph{(\alph*)}]
		\item For any $z\in D_\tau\cap\Omega_\zeta$,
		\[\left|G_{\tilde{\Omega}_\zeta}(\zeta',z)-G_{D_\tau}(\zeta',z)\right|\leq\frac{C_2}{m^2d(z,\partial\tilde{\Omega}_\zeta)^2}+\frac{C_2}{m^2d(z,\zeta')^3},\]
		where $G_{D_\tau}$ is the continuous Green's function of $D_\tau$.
		\item For any $z\in D_\tau$,
		\[\left|G_{D_\tau}(\zeta',z)-\frac{c'_\zeta}{m}J_\zeta(z)\right|\leq\frac{C_2}{m^2d(z,\zeta')^2}+\frac{C_2}{m^2d(z,\zeta)^2},\]
		where $c'_\zeta\in[2^{-1/2},1]$ depends only on $\zeta$ and $J_{D_\tau}$ is the Poisson kernel on $D_\tau$.
		\item The following mean-value property holds:
		\[\left|\sum_{z\in \tilde{\Omega}_\zeta\cap\frac{1}{m}\mathbb{Z}^2}\tilde{H}_\zeta(z)-\sum_{i=1}^{\lfloor m^2\tau\rfloor}\tilde{H}_\zeta(z_{m,i})\right|\leq C'_2m^{2/5}.\]
	\end{enumerate}
\end{lemma}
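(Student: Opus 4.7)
The plan is to establish the three parts in sequence, each building on the previous.

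For part (a), I would begin from the last-exit representation in Lemma~\ref{harmoniclemma2-1}(d); using Green's symmetry, rewrite it as $G_{\tilde{\Omega}_\zeta}(\zeta',z) = \mathbb{E}_{\zeta'}\,g(m(W(\tau^*)-z)) - g(m(\zeta'-z))$, with $W$ the simple random walk started at $\zeta'$. Substituting the expansion $g(w) = (2/\pi)\log|w|+\lambda+O(|w|^{-2})$ from Section~\ref{kernelsection} decomposes $G_{\tilde{\Omega}_\zeta}$ into $-(2/\pi)\log|\zeta'-z|$ plus the discrete harmonic extension of $(2/\pi)\log|\cdot-z|$ from $\partial\tilde{\Omega}_\zeta$, up to residuals of size $m^{-2}|W(\tau^*)-z|^{-2}$ and $m^{-2}|\zeta'-z|^{-2}$. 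Since $W(\tau^*)\in\partial\tilde{\Omega}_\zeta$, the first residual is bounded by $m^{-2}d(z,\partial\tilde{\Omega}_\zeta)^{-2}$, which gives the first claimed term. The continuous Green's function $G_{D_\tau}$ admits the analogous Brownian-motion decomposition, so what remains is to compare the two harmonic extensions; this I would handle by strong approximation of SRW by Brownian motion on scale $1/m$, together with the $O(1/m)$ boundary closeness of Lemma~\ref{harmoniclemma2}(a). Because the boundary data $\log|\cdot-z|$ is only $1/d(z,\cdot)$-Lipschitz, standard gradient and Hessian bounds for $G_{D_\tau}$ then convert the coupling error into the second claimed term $m^{-2}d(z,\zeta')^{-3}$.

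For part (b), I would Taylor-expand $G_{D_\tau}(\cdot,z)$ at the boundary point $\zeta$. Because $G_{D_\tau}(\zeta,z)=0$ and $\zeta'=\zeta-1/m$ lies at distance $1/m$ from $\zeta$ in the $-\unit{x}$ direction, one obtains $G_{D_\tau}(\zeta',z) = -(1/m)\,\unit{x}\cdot\nabla_1 G_{D_\tau}(\zeta,z) + O(m^{-2}\,\|\nabla_1^2 G_{D_\tau}(\cdot,z)\|)$. The inward normal derivative of $G_{D_\tau}$ at $\zeta$ is precisely the Poisson kernel $J_\zeta$, and projecting $-\unit{x}$ onto the inward normal direction introduces the factor $c'_\zeta = \unit{x}\cdot\unit{n}$, which lies in $[2^{-1/2},1]$ since $\unit{n}$ is constrained to the east-northeast half-quadrant. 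Classical interior and boundary Hessian estimates for Green's functions on smooth domains give $\|\nabla_1^2 G_{D_\tau}(\cdot,z)\| \lesssim d(z,\zeta)^{-2}+d(z,\zeta')^{-2}$, matching the stated error terms.

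For part (c), I would follow the structure of the proof of Lemma~\ref{harmoniclemma2}(c), with $\tilde{H}_\zeta$ and $J_\zeta$ playing the roles of $H_\zeta$ and $F_\zeta$. Combining Lemma~\ref{harmoniclemma2-1}(d) with parts (a) and (b) gives $\tilde{H}_\zeta(z) = (c_\zeta c'_\zeta/m)\,J_\zeta(z) + \text{residuals}$ on $\tilde{\Omega}_\zeta\setminus B_{1/m}(\zeta)$. The quadrature property (Lemma~\ref{quaddomain}) applied to $J_\zeta$---which is harmonic throughout $D_\tau$ because its pole lies on $\partial D_\tau$---yields the continuum identity linking the two sides in exactly the way that $F_\zeta$ did in Lemma~\ref{harmoniclemma2}(c). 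Boundary-strip and near-source remainders are then controlled as in that earlier proof, using Lemma~\ref{heleshawlength} for the perimeter and Lemma~\ref{harmoniclemma2-1}(c) for the pointwise bounds on $\tilde{H}_\zeta$.

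The main obstacle, and the reason the bound is $m^{2/5}$ rather than the $\log m$ of Lemma~\ref{harmoniclemma2}(c), is that the pointwise residuals from (a) are not integrable near the pole, so naive summation would give an $O(m)$ total error. I would resolve this via an $L^1$-versus-$L^\infty$ trade-off: excise a disk $B_r(\zeta)$ and inside it use the trivial bound $\tilde{H}_\zeta\leq 1$ (which follows from parts (a), (b) of Lemma~\ref{harmoniclemma2-1} by the discrete maximum principle) so that the contribution from $B_r(\zeta)$ to each sum is at most $O(m^2 r^2)$, while on the complement the pointwise residuals integrate to a bound that grows as $r\to 0$. The choice $r\sim m^{-2/5}$ balances these two, producing the claimed $O(m^{2/5})$; a companion argument handles the boundary strip as in Lemma~\ref{harmoniclemma2}(c). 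This trade-off is the $L^1$-convergence rate of the discrete Green's function flagged as a new contribution in the introduction.
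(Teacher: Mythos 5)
Your outlines for parts (a) and (b) are broadly consistent with the paper, with one notable deviation: for (a) you propose to control the comparison between the discrete and continuous harmonic extensions via strong approximation (KMT-type coupling) of the walk by Brownian motion, whereas the paper works entirely deterministically. It writes $f_0(z')=\mathbb{E}\log|W_{z'}(\tau^*)-z|$ (discrete harmonic) and $f_1(z')=G_{D_\tau}(z',z)+\log|z'-z|$ (continuum harmonic), observes that the discrete five-point Laplacian $\Delta_h$ applied to $f_0-f_1$ equals $\Delta_h f_1$, bounds this by $C/m^2 d(z,z')^3$ via a fourth-derivative estimate on $f_1$, and then closes the argument with the discrete maximum principle and an $O(m^{-2})$ boundary discrepancy. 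This is cleaner and more elementary than a coupling argument, and your sketch does not make it clear that the coupling route actually recovers the $m^{-2}d^{-3}$ rate: the typical coupling error is of order $\log m/m$, and propagating that through boundary data with Lipschitz constant $\sim 1/d$ gives a rather different-looking bound. Your part (b) is a correct fleshing-out of the paper's one-line appeal to $J_\zeta(z)=\partial_{\unit{n}}G_{D_\tau}(z',z)|_{z'=\zeta}$ and the angular constraint on $\zeta'-\zeta$.

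The genuine gap is in part (c). Your proposed $L^1$-versus-$L^\infty$ trade-off---cut out $B_r(\zeta)$, use $\tilde H_\zeta\leq 1$ inside, and sum the pointwise residuals from (a)--(b) outside---is quantitatively off. With the trivial bound $\tilde H_\zeta\leq 1$, the contribution from $B_r(\zeta)$ to $\sum_z\tilde H_\zeta(z)$ is of order $m^2r^2$; meanwhile the residual $\sim m^{-2}d(z,\zeta)^{-3}$ integrated over $B_r(\zeta)^c$ gives $\sim 1/r$. Balancing $m^2r^2\sim 1/r$ puts $r\sim m^{-2/3}$ and yields an overall error of order $m^{2/3}$, not $m^{2/5}$. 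At $r=m^{-2/5}$, as you propose, the inside contribution is $m^2r^2 = m^{6/5}$, which wildly overshoots. Even replacing the trivial bound by the sharper $\tilde H_\zeta\lesssim 1/(md(z,\zeta))$ only improves the near-pole sum to $\sim mr$, giving $r\sim m^{-1/2}$ and error $\sim m^{1/2}$---still short of $m^{2/5}$. Moreover, your sketch does not address that the $m^{-2}d(z,\partial\tilde\Omega_\zeta)^{-2}$ term in (a) is non-integrable over a thin boundary strip far from $\zeta$ as well; a single cutoff radius around $\zeta$ cannot handle both singular directions, and the ``companion argument'' deferring to Lemma~\ref{harmoniclemma2}(c) does not apply since the error estimates in (a) are genuinely weaker than those available for $H_\zeta$.

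What the paper actually does in (c) is more delicate: it uses a two-parameter partition of $\tilde\Omega_\zeta$ into four sets $A^{\alpha_0},B^{\alpha_0},C^{\alpha_0},D^{\alpha_0}$ with $\alpha_0=m^{-2/5}$ (boundary cutoff) and $\eps_0=m^{-1/5}$ (pole cutoff). On the interior piece $A^{\alpha_0}$, parts (a)--(b) give $O(\alpha_0^{-1})=O(m^{2/5})$. Crucially, on the near-pole pieces $C^{\alpha_0}$ and $D^{\alpha_0}$ the paper does \emph{not} use a one-sided crude bound; instead it introduces slice coordinates $(x,y)$ at $\zeta$ and uses the tangent disks $B^\pm_\zeta$ to show both $\tilde H_\zeta$ and $\tfrac{c_\zeta c'_\zeta}{m}J_\zeta$ share the same leading singularity $\tfrac{1}{m}\tfrac{y}{x^2+y^2}$, so their \emph{difference} is of second order and hence summable, contributing $O(m\eps_0\alpha_0)=O(m^{2/5})$. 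The boundary strip $B^{\alpha_0}$ is handled with a separate Poisson-kernel bound $\tilde H_\zeta\lesssim md(z,\partial\tilde\Omega_\zeta)/(m^2d(z,\zeta)^2)$, giving $O(m\alpha_0^2/\eps_0)=O(m^{2/5})$. It is precisely the simultaneous tuning of both cutoffs and the cancellation between $\tilde H_\zeta$ and $J_\zeta$ near $\zeta$ that produces the $m^{2/5}$ rate; a single-radius excision with an $L^\infty$ bound cannot.
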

\begin{proof}
	\noindent (a) For this, we use the estimate
	\[g(x,y)=\log m|x-y|+\lambda+O\left(\frac{1}{m^2|x-y|^2}\right)\]
	mentioned in Section \ref{kernelsection}. This implies
	\[G_{\Omega_\zeta}(z,\zeta')=\mathbb{E}g(W_{\zeta'}(\tau^*),z)-g(\zeta',z)=\mathbb{E}\log|W_{\zeta'}(\tau^*)-z|-\log|\zeta'-z|+O\left(\frac{1}{m^2d(z,\partial\Omega_\zeta)^2}\right),\]
	as the $\log m$ and $\lambda$ terms cancel out. Fixing $z$, we see that $\mathbb{E}\log|W_{z'}(\tau^*)-z|$ is a discrete harmonic function of $z'$, with boundary values $\log|z'-z|$ for $z'\in\partial\tilde{\Omega}_\zeta$. With the possible exception of the points $\zeta\pm im^{-2}$, all boundary points of $\tilde{\Omega}_\zeta$ also lie on the boundary of $\partial D_\tau$; then we can compare $f_0(z')=\mathbb{E}\log|W_{z'}(\tau^*)-z|$ with the continuous harmonic function $f_1(z')=G_{D_\tau}(z',z)+\log|z'-z|$. Indeed, the latter has fourth derivative bounded above by $C/d(z,z')^3$, so we know 
	\begin{align*}
		\left|\Delta_h \left(\mathbb{E}\log|W_{z'}(\tau^*)-z|-(G_{D_\tau}(z',z)+\log|z'-z|)\right)\right|&=\left|\Delta_h \left(G_{D_\tau}(z',z)+\log|z'-z|\right)\right|\\
		&\leq \frac{C}{m^2d(z,z')^3},
	\end{align*}
	where $\Delta_h$ is the five-point stencil Laplacian. Furthermore, $\mathbb{E}\log|W_{z'}(\tau^*)-z|$ and $G_{D_\tau}(z',z)+\log|z'-z|$ differ by at most $O(m^{-2})$ on the boundary (at $\zeta\pm im^{-2}$), so the maximum principle gives
	\[\left|\mathbb{E}\log|W_{z'}(\tau^*)-z|-(G_{D_\tau}(z',z)+\log|z'-z|)\right|\leq\Delta_h^{-1}\left(\frac{C}{m^2d(z,\zeta')^3}\right)+O(m^{-2}).\]
	The claim follows, as $\Delta_h^{-1}\left(\frac{C}{m^2d(z,\zeta')^3}\right)$ approaches $\zeta'$ no faster than $O(z^{-3})$.
	
	\noindent (b) This follows from the general formula $J_\zeta(z)=\partial_\unit{n}G_{D_\tau}(z',z)|_{z'=\zeta}$, along with the fact that $\zeta'$ is at most an angle $\pi/4$ away from the normal direction inwards from $\zeta$.
	
	\noindent (c) Set $\alpha_0=m^{-2/5}$ and $\eps_0=m^{-1/5}$, and let $B^+_\zeta\subset D_\tau$ and $B^-_\zeta\subset D_\tau^c$ be the disks of radius $R_0$ tangent to $\partial D_\tau$ at $\zeta$. For each $\alpha>0$, we partition $\ol{\Omega}_\zeta:=\tilde{\Omega}_\zeta\cap\frac{1}{m}\mathbb{Z}^2$ by sets $A^\alpha$, $B^\alpha$, $C^\alpha$, and $D^\alpha$ as follows:
	\[A^\alpha=\left\{z\in\ol{\Omega}_\zeta\;\big|\;d(z,\partial \Omega_\zeta)>\alpha\right\},\qquad B^\alpha=\ol{\Omega}_\zeta\setminus (A^\alpha\cup B_{\eps_0}(\zeta)),\]
	\[C^\alpha=\ol{\Omega}_\zeta\setminus (A^\alpha\cup B^\alpha\cup B^+_\zeta),\qquad D^\alpha=\ol{\Omega}_\zeta\setminus (A^\alpha\cup B^\alpha\cup C^\alpha).\]
	\begin{figure}[H]
		\centering
		\begin{tikzpicture}
			\node[anchor=south west,inner sep=0] (image) at (0,0) {\includegraphics[scale=.4]{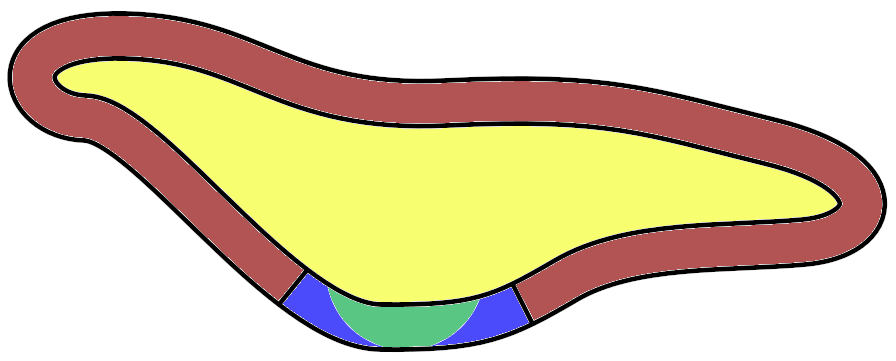}};
			\draw [dashed, line width=1pt] (5.73,1.31) circle(1.13cm);
			\draw [line width=2pt] (4.63,1.09) arc(192:340:1.13cm);
			\node[text width=2cm,rotate=0] at (6.6,-.1) {$\zeta$};
			\node[text width=2cm,rotate=0] at (4.35,3) {$A^\alpha$};
			\node[text width=2cm,rotate=0] at (3.8,2) {$B^\alpha$};
			\node[text width=2cm,rotate=0] at (6.3,.55) {$C^\alpha$};
			\node[text width=2cm,rotate=0,color=white] at (5.12,.85) {$D^\alpha$};
		\end{tikzpicture}
		\caption{An illustration of our partition $A^\alpha\cup B^\alpha\cup C^\alpha\cup D^\alpha$ of $\ol{\Omega}_\zeta$, with the circle $\partial B^+_\zeta$ marked. }\label{breakdownfig}
	\end{figure}
	We will bound the error $\left|\sum G_{\tilde{\Omega}_\zeta}(\zeta',\cdot)-m^2\int G_{D_\tau}(\zeta',\cdot)\right|$ over each of these sets (with $\alpha=\alpha_0$) in turn. For any $\alpha>C_2/m$, we know that $A^\alpha\subset D_\tau$ from Lemma \ref{harmoniclemma}(a), so part (a) implies \[|G_{\Omega_\zeta}(\zeta',z)-G_{D_\tau}(\zeta',z)|\leq C_2/(m\alpha)^2\]
	on $A^\alpha$. This allows us to bound 
	\begin{align*}
		\left|\sum_{A^{\alpha_0}}G_{\tilde{\Omega}_\zeta}(\zeta',z)-m^2\int_{A^{\alpha_0}}G_{D_\tau}(\zeta',z)\right|&\leq 2m^2\int_{\alpha_0}^\infty d\alpha\;\frac{C_2}{m^2\alpha^2}\op{Len}(\partial A^\alpha)+2\int_{(D_\tau)_{\alpha_0}}\frac{C_2}{d(z,\zeta')}\\
		&\leq 2\int_{\alpha_0}^\infty d\alpha\;\frac{C_2U}{\alpha^2}\sqrt{1+\tau}+\tfrac{1}{2}C_2'\alpha_0^{-1}\\
		&\leq C_2'\alpha_0^{-1}=C_2'm^{2/5},
	\end{align*}
	with $C_2'$ large enough, using the fact that $\op{Len}(\partial A^\alpha)\leq\op{Len}(D_\tau)$, and using $|\nabla G_{D_\tau}(z)|=O(|z-\zeta|^{-2})$ to relate the initial integral to a sum. More precisely, we could integrate $C_2/d(z,\zeta')$ over an encompassing shape as in Figure \ref{circfig} to retrieve the bound $\int_{(D_\tau)_{\alpha_0}}\frac{C_2}{d(z,\zeta')}=O(\alpha_0^{-1})$. This immediately gives
	\begin{align*}
		\left|\sum_{A^{\alpha_0}}\tilde{H}_\zeta'(z)-c_\zeta c'_\zeta m\int_{A^{\alpha_0}}J_\zeta(z)\right|\leq \tfrac{1}{4}C_2'm^{2/5},
	\end{align*}
	from Lemma \ref{harmoniclemma2-1}(d) and part (b) above.
	
	Next, we control the sum over $B^{\alpha_0}$. Since $\partial D_\tau$ is smooth, the probability of a point $z$ near the boundary to exit $\tilde{\Omega}_\zeta$ at $\zeta$ is bounded by $C\frac{md(z,\partial\tilde{\Omega}_\zeta)}{m^2d(z,\zeta)^2}$, which we estimate as
	\[\sum_{B^{\alpha_0}}\tilde{H}_\zeta(z)\leq2Cm^2\cdot\int_{B^{\alpha_0}}\frac{md(z,\partial\tilde{\Omega}_\zeta)}{m^2d(z,\zeta)^2}\leq \tfrac{1}{2}C'_2m\int_{0}^{\alpha_0}\alpha/\eps_0=\tfrac{1}{4}C'_2m\alpha_0^2/\eps_0=\tfrac{1}{4}C'_2m^{2/5}.\]
	
	For the remaining sets, we introduce slice coordinates $(x,y)$ for $\partial D_\tau$ near $\zeta$, such that $\zeta=(0,0)$. These points are bounded outside the disk $B^-_\zeta$, so the probability of their associated random walks exiting $\tilde{\Omega}_\zeta$ at $\zeta$ is bounded by
	\[\tilde{H}_\zeta(z)\leq\mathbb{P}[W_z\;\text{enters}\;B^-_\zeta\;\text{at}\;\zeta]\leq \frac{1}{m}\cdot\frac{1}{x^2+y^2}(y+O(x^2+y^2)).\]
	Then we find
	\begin{align*}
		\sum_{C^{\alpha_0}} \tilde{H}_\zeta&\leq m^2\cdot\frac{4}{m}\int_0^{\eps_0}dx\int_0^{R_0^{-1}x^2}dy\;\left(\frac{y}{x^2+y^2}+C\right)\\
		&\leq 4m\int_0^{\eps_0}dx\;\left(\log(1+R_0^{-1}x^2)+CR_0^{-1}x^2\right)\\
		&\leq \tfrac{3}{4}mC_2'\int_0^{\eps_0}dx\;x^2=\tfrac{1}{4}C'_2m\eps_0^3=\tfrac{1}{4}C'_2m^{2/5},
	\end{align*}
	and similarly for $c_\zeta c'_\zeta m\int_{C^{\alpha_0}}J_\zeta(z)$.
	
	Finally, for $z\in D^{\alpha_0}$, we can bound
	\[\mathbb{P}[W_z\;\text{exits}\;B^+_\zeta\;\text{at}\;\zeta]\leq\tilde{H}_\zeta(z)\leq\mathbb{P}[W_z\;\text{enters}\;B^-_\zeta\;\text{at}\;\zeta],\]
	which gives
	\[\tilde{H}_\zeta(z)=\frac{1}{m}\cdot\frac{1}{x^2+y^2}(y+O(x^2+y^2))\]
	and similarly 
	\[\frac{c_\zeta c_\zeta'}{m}J_\zeta(z)=\frac{1}{m}\cdot\frac{1}{x^2+y^2}(y+O(x^2+y^2)).\]
	The error is dominated by the second order part:
	\[\left|\sum_{D^{\alpha_0}}\tilde{H}_\zeta'(z)-c_\zeta c'_\zeta m\int_{D^{\alpha_0}}J_\zeta(z)\right|\leq 4m\int_0^{\eps_0}dx\int_0^{\alpha_0}dy\;C\frac{x^2+y^2}{x^2+y^2}=4mC\eps_0\alpha_0=\tfrac{1}{4}C'_2m^{2/5}.\]
	The proof finishes as does Lemma \ref{harmoniclemma2}(c), but including the extra factor $c_\zeta$ from Lemma \ref{harmoniclemma2-1}(d).
\end{proof}

Just as with $H_\zeta$, we associate the following martingale to $\tilde{H}_\zeta$:
\[\tilde{M}_\zeta(t):=\sum_{\ell=0}^{\lfloor t\rfloor-1}\left(\tilde{H}_\zeta(\beta_\ell(1))-\tilde{H}_\zeta(z_{m,\ell})\right)+\tilde{H}_\zeta(\beta_\ell(t-\lfloor t\rfloor))-\tilde{H}_\zeta(z_{m,\lfloor t\rfloor}),\]
using the same notation as in Section \ref{martingalesection}. Now, the rescaled function $(1+1/2mR_0)\tilde{H}_\zeta-1/2mR_0$ satisfies the properties outlined in Lemmas \ref{harmoniclemma}(a,b,d) and Lemma \ref{harmoniclemma2}(b), so we can prove the following parallels to Lemmas \ref{insideest} and \ref{outsideest} exactly as before:
\begin{lemma}\label{insideest2}
	Suppose $D_s$ is a smooth flow arising from an initial mass distribution. For 
	\[m\geq \max(3a+C_2,2C_2/\inf\nolimits_\zeta R_1),\]
	all $t\geq 1$, and $\zeta\notin (D_{t/m^2})^{(4a+2C_2)/m}$, we have
	\[\mathbb{E}\left[e^{\tilde{S}_\zeta(t)}\mathbb{1}_{\Eps_{(a+1)/m}(t)^c}\right]\leq m^K,\]
	where $\tilde{S}_\zeta(t)=\langle \tilde{M}_\zeta,\tilde{M}_\zeta\rangle_t$.
\end{lemma}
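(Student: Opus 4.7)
The plan is to mimic the proof of Lemma \ref{insideest} essentially line for line, with $\tilde{H}_\zeta$ in place of $H_\zeta$. As the paragraph preceding the lemma already observes, the affinely rescaled function $\tilde{H}'_\zeta := (1+\tfrac{1}{2mR_0})\tilde{H}_\zeta - \tfrac{1}{2mR_0}$ inherits from Lemma \ref{harmoniclemma2-1} all of the bounds actually used in the Lemma \ref{insideest} argument, namely Lemma \ref{harmoniclemma}(a,b,d) and Lemma \ref{harmoniclemma2}(b). Since additive constants drop out of martingale differences, the martingale $\tilde{M}'_\zeta$ built from $\tilde{H}'_\zeta$ equals $(1+\tfrac{1}{2mR_0})\tilde{M}_\zeta$, so its quadratic variation $\tilde{S}'_\zeta$ dominates $\tilde{S}_\zeta$; it therefore suffices to prove the stated bound with $\tilde{S}'_\zeta$ in place of $\tilde{S}_\zeta$.

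The core step is to bound the range of $\tilde{H}'_\zeta(z) - \tilde{H}'_\zeta(z_{m,k+1})$ as $z$ varies over $\partial A_\zeta(k)$. On the event $\mathcal{E}_{(a+1)/m}(t)^c$ one has $A_m(n)\subset (D_{n/m^2})^{(a+1)/m}$ for every $n\leq t$, and combining this with $\zeta\notin(D_{t/m^2})^{(4a+2C_2)/m}$ and Lemma \ref{heleshawdist} gives, exactly as in Lemma \ref{insideest},
\[m\,d(\zeta, A_m(n)) \geq 3a + 2C_2 - 1 + mv\bigl(\sqrt{1+t/m^2}-\sqrt{1+n/m^2}\bigr).\]
Feeding this into Lemma \ref{harmoniclemma2-1}(c) yields an upper bound on $\tilde{H}_\zeta(z)$ for $z\in\partial A_\zeta(k)$, and the hypothesis $m\geq 2C_2/\inf_\zeta R_1$ lets us apply the same estimate at the source point $z_{m,k+1}$. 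The resulting interval $[-a_k,b_k]$ has exactly the same shape as in Lemma \ref{insideest}: the lower bound $a_k = O(1/(mR_2))$ with $R_2 = \min(R_0/2, R_1/4)$, and the upper bound
\[b_k \leq \frac{1}{2mR_0} + \frac{1}{3a+2C_2-1+mv(\sqrt{1+t/m^2}-\sqrt{1+k/m^2})}.\]

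From here the remainder is transcription. The Dubins--Schwarz representation applied to $\tilde{M}'_\zeta$ gives the analog of Lemma \ref{escapetime}, namely $\tilde{S}'_\zeta(k+1)-\tilde{S}'_\zeta(k) \leq \tau_k(-a_k,b_k)$, and Lemma \ref{escapelemma} with independence of the $\tau_k$ yields
\[\log\mathbb{E}\bigl[e^{\tilde{S}'_\zeta(t)}\mathbf{1}_{\mathcal{E}_{(a+1)/m}(t)^c}\bigr] \leq \sum_{k=1}^{t}\log(1+10\,a_k b_k) \leq 10\sum_{k=1}^{t} a_k b_k.\]
The same substitution $x = v\sqrt{m^2+n}$ and logarithmic integral carried out at the end of the Lemma \ref{insideest} proof bounds the right-hand side by a constant multiple of $\log m$, producing $m^K$ upon exponentiation. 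The only nontrivial point to check is that $\tilde{H}_\zeta(z_{m,k+1})$ has the same $O(1/(mR_1))$ upper bound used for $H_\zeta$ in Lemma \ref{insideest}, which is exactly the content of Lemma \ref{harmoniclemma2-1}(c) under the hypothesis $m\geq 2C_2/\inf_\zeta R_1$; I do not anticipate any deeper obstacle, since Lemma \ref{harmoniclemma2-1} was designed precisely to let this transcription go through.
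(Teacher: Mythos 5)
Your proposal is correct and follows the same approach as the paper, which simply observes that the affinely rescaled function $(1+\tfrac{1}{2mR_0})\tilde{H}_\zeta - \tfrac{1}{2mR_0}$ inherits the properties of Lemmas \ref{harmoniclemma}(a,b,d) and \ref{harmoniclemma2}(b) and then declares that the proof of Lemma \ref{insideest} carries over verbatim; you have filled in that transcription, including the $\tilde{S}'_\zeta\geq\tilde{S}_\zeta$ reduction. One tiny slip: after applying Lemma \ref{harmoniclemma2-1}(c), the denominator in your bound for $b_k$ should read $3a+C_2-1+mv(\cdots)$ rather than $3a+2C_2-1+mv(\cdots)$, since a $C_2$ is subtracted when invoking that lemma — but this does not affect the conclusion.
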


\begin{lemma}\label{outsideest2}
	Suppose $D_s$ is smooth, and fix $a\geq 2C_2+2$, $\ell\leq a$, and $t>0$. For 
	\[m\geq \max(3a+C_2,5a/\inf\nolimits_\zeta R_1)\]
	and $\zeta\in\frac{1}{m}\mathbb{Z}^2\cap \left((D_{t/m^2})_{\ell/m}\setminus D_{0}\right)$, we have
	\[\mathbb{E}\left[e^{\tilde{S}_\zeta(t)}\mathbb{1}_{\Eps_{(a+1)/m}(t)^c}\right]\leq m^Ke^{K'a}.\]
\end{lemma}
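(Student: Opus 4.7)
The plan is to mirror the proof of Lemma \ref{outsideest} almost verbatim, with $\tilde{H}_\zeta$ and $\tilde{S}_\zeta$ in place of $H_\zeta$ and $S_\zeta$, and with Lemma \ref{insideest2} replacing Lemma \ref{insideest}. As the author notes just before the statement, the rescaled function $(1+\tfrac{1}{2mR_0})\tilde{H}_\zeta - \tfrac{1}{2mR_0}$ obeys the same structural estimates as $H_\zeta$, so the martingale machinery transfers without modification.

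First I would split the time evolution at a suitable $t_0$. Since $mR_1 \geq 5a$ and $a \geq 2C_2+2$, Lemma \ref{heleshawdist} lets me pick $t_0 \geq 1$ satisfying $D_{t_0/m^2} \subset (D_{t/m^2})_{(4a+2C_2+1)/m}$, and in particular
\[
t - t_0 \;\leq\; 4v^{-1}m\sqrt{1+T}\,(4a+2C_2+1) \;=\; O(ma).
\]
Because $\zeta \notin (D_{t_0/m^2})^{(4a+2C_2)/m}$ by construction, Lemma \ref{insideest2} yields
\[
\mathbb{E}\!\left[e^{\tilde{S}_\zeta(t_0)}\mathbb{1}_{\Eps_{(a+1)/m}(t_0)^c}\right] \;\leq\; m^K.
\]

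Next I would control the increments $\tilde{S}_\zeta(n) - \tilde{S}_\zeta(n-1)$ for $t_0 < n \leq t$. Lemma \ref{harmoniclemma2-1}(a) gives $\tilde{H}_\zeta \geq 0$, and Lemma \ref{harmoniclemma2-1}(c) applied to the source points (using $md(\zeta,z_{m,i}) \geq mR_1 \geq 5a > C_2$) gives $\tilde{H}_\zeta(z_{m,i}) \leq \tfrac{1}{2mR_0} + \tfrac{1}{mR_1 - C_2} \leq \tfrac{1}{mR_2}$ for a suitable $R_2 = \min(R_0, R_1/4)$. Since $\tilde{H}_\zeta \leq 1$ everywhere (it is a probability), we obtain
\[
-\tfrac{1}{mR_2} \;\leq\; \tilde{H}_\zeta(z) - \tilde{H}_\zeta(z_{m,i}) \;\leq\; 1
\]
on $\tilde{\Omega}_\zeta$. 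The exact analog of Lemma \ref{escapetime} (proved word-for-word for $\tilde{M}_\zeta$) then gives $\tilde{S}_\zeta(n) - \tilde{S}_\zeta(n-1) \leq \tau_n(-\tfrac{1}{mR_2}, 1)$, and since $\tfrac{1}{mR_2} + 1 \leq 3$ for large $m$, Lemma \ref{escapelemma} yields
\[
\log \mathbb{E}\,e^{\tau_n(-1/mR_2,\,1)} \;\leq\; \log\!\left(1 + \tfrac{10}{mR_2}\right) \;\leq\; \tfrac{10}{mR_2}.
\]

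Finally I would sum, using the independence of the $\tau_n$ and the bound $t - t_0 = O(ma)$:
\[
\log \mathbb{E}\!\left[e^{\tilde{S}_\zeta(t)}\mathbb{1}_{\Eps_{(a+1)/m}(t)^c}\right]
\;\leq\; K\log m \;+\; (t-t_0)\cdot\tfrac{10}{mR_2}
\;\leq\; K\log m \;+\; K' a,
\]
for a suitable constant $K'$ depending only on $R_2$, $v$, and $T$. This is exactly the claimed bound. The one place where the argument does \emph{not} reduce to pure transcription is the upper bound on $\tilde{H}_\zeta(z) - \tilde{H}_\zeta(z_{m,i})$: whereas $H_\zeta$ was bounded by $5/2$ by appealing to Lemma \ref{harmoniclemma2}(b), here the only clean bound is $\tilde{H}_\zeta \leq 1$, which may be saturated near the pole $\zeta$. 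This is the main conceptual obstacle, but it is harmless because the interval $[-1/mR_2, 1]$ still has length bounded by a constant, and the increment count $O(ma)$ is precisely what produces the exponential factor $e^{K'a}$ stated in the lemma.
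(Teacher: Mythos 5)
Your proposal is correct and essentially reproduces the paper's own reasoning, which is simply to transcribe the proof of Lemma \ref{outsideest} using the rescaled function $(1+\tfrac{1}{2mR_0})\tilde H_\zeta-\tfrac{1}{2mR_0}$ (and hence $\tilde M_\zeta$, $\tilde S_\zeta$) in place of $H_\zeta$; the one difference you flag---replacing the upper bound $5/2$ on increments by the trivial bound $\tilde H_\zeta\le 1$---is harmless, and the paper deals with it implicitly via the rescaling remark. (Minor quibble: the paper uses $R_2=\min(R_0/2,R_1/4)$ rather than your $\min(R_0,R_1/4)$; this does not affect the argument.)
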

	\subsection{Second estimate}\label{secondestsection}

\begin{lemma}\label{secondest}
	There is an absolute constant $C_4>0$ such that, for large enough $m$, if $s\in[0,T]$, $\ell\geq C_4 m^{2/5}$, and $a\leq\ell^2/C_4m^{2/5}$, then
	\[\mathbb{P}(\mathcal{L}_{\ell/m}[m^2s]\cap\Eps_{a/m}[m^2s]^c)\leq e^{-2m^{2/5}}.\]
\end{lemma}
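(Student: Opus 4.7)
The plan is to adapt the proof of Lemma~\ref{firstest}, now with the martingale $\tilde M_\zeta$ of Section~\ref{secondestsection} in place of $M_\zeta$ and the positive Poisson kernel $\tilde H_\zeta$ in place of $H_\zeta$. First I would decompose $\mathcal{L}_{\ell/m}[m^2s]$ by the first $(\ell/m)$-late point: for each $\zeta \in \frac{1}{m}\mathbb{Z}^2$ and time $t$, let $\tilde Q_{\zeta,t}$ be the event that $\zeta \in (D_{t/m^2})_{\ell/m}$ is the earliest $(\ell/m)$-late point, so that $\mathcal{L}_{\ell/m}[m^2s] = \bigcup_{\zeta,t}\tilde Q_{\zeta,t}$. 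Writing $\tau = \tau(\zeta)$ for the entry time at which $\zeta \in \partial D_\tau$, Lemma~\ref{heleshawdist} gives $t - m^2\tau \asymp m\ell$. Since $A_\zeta(k) \subset A_m(k)$ by the standard monotone coupling (walks in $A_\zeta$ stop no later than those in $A_m$), the late point $\zeta$ is also missing from $A_\zeta(t)$, so the martingale argument can be carried out entirely within the modified process.

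The key step is to show that $\tilde M_\zeta(t) \leq -c\ell$ on $\tilde Q_{\zeta,t} \cap \Eps_{(a+1)/m}[t]^c$. Under $\Eps^c$, the condition $\mathcal{L}_{\ell/m}[t-1]^c$ (implicit in $\tilde Q_{\zeta,t}$) forces $A_\zeta(t) \cap \frac{1}{m}\mathbb{Z}^2$ to equal $\tilde\Omega_\zeta \cap \frac{1}{m}\mathbb{Z}^2 \setminus\{\zeta\}$ up to a symmetric difference of width $O((a+\ell)/m)$ near $\partial D_\tau$, where $\tilde H_\zeta$ is already $O(1/m)$ by Lemma~\ref{harmoniclemma2-1}(c). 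Extending $\tilde H_\zeta \equiv 0$ off $\tilde\Omega_\zeta$ makes the endpoint contributions of boundary exits vanish, and combined with $\tilde H_\zeta(\zeta) = 1$ and the mean-value property of Lemma~\ref{harmoniclemma2-2}(c) one obtains
\[
\sum_{z \in A_\zeta(t)\setminus A_\zeta(0)} \tilde H_\zeta(z) - \sum_{i=1}^{m^2\tau} \tilde H_\zeta(z_{m,i}) = -1 + O(m^{2/5}).
\]
The remaining $(t - m^2\tau) \asymp m\ell$ source points contribute $\sum_{i=m^2\tau+1}^t \tilde H_\zeta(z_{m,i}) \geq c\ell$, using the uniform lower bound $\tilde H_\zeta(z_{m,i}) \geq c/m$ that follows from the concentration hypothesis $Q_i^{s_i} \subset\subset \op{int}(D_0)$ together with Lemma~\ref{harmoniclemma2-1}(d). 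Hence $\tilde M_\zeta(t) \leq -c\ell/2$ once $\ell \geq C_4 m^{2/5}$ with $C_4$ chosen large enough.

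Setting $M = c\ell/4$, Markov's inequality applied with Lemma~\ref{outsideest2} gives $\mathbb{P}(\tilde Q_{\zeta,t} \cap \{\tilde S_\zeta(t) > M\}) \leq m^K e^{K'a - M} \leq e^{-3m^{2/5}}$, provided $C_4$ is chosen large enough relative to $K'$ to absorb the term $K'a \leq K'\ell^2/(C_4 m^{2/5})$. On $\{\tilde S_\zeta(t) \leq M\} \cap \{\tilde M_\zeta(t) \leq -c\ell/2\}$, the Dubins--Schwarz representation $\tilde M_\zeta = \tilde B_\zeta(\tilde S_\zeta)$ together with Lemma~\ref{escapelemma2} bounds the probability by $e^{-c\ell/4} \leq e^{-3m^{2/5}}$. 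Summing over the $O(m^4)$ pairs $(\zeta,t)$ yields the claimed $e^{-2m^{2/5}}$ bound.

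The principal obstacle is the martingale calculation above. Unlike the exact mean-value property available in the disk-source setting of \cite{10.2307/23072157}, our $\tilde H_\zeta$ satisfies only the approximate identity of Lemma~\ref{harmoniclemma2-2}(c) with $O(m^{2/5})$ error, so the $-1$ deficit from the missing $\zeta$ alone is undetectable. One must instead wait until time $t \asymp m^2\tau + m\ell$ to accumulate a signal of order $\ell$ from the $\asymp m\ell$ extra source-point contributions, which forces the threshold $\ell \geq C_4 m^{2/5}$---precisely the restriction that ultimately produces the $m^{-3/5}$ convergence rate of Theorem~\ref{biggun'}.
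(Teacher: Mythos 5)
Your overall strategy matches the paper's: work with the positive Poisson kernel $\tilde H_\zeta$, observe that on the late event the martingale $\tilde M_\zeta$ is forced negative of order $\ell$, then combine a Markov bound on $\tilde S_\zeta$ (via Lemma~\ref{outsideest2}) with the Dubins--Schwarz representation and Lemma~\ref{escapelemma2}. But your choice $M = c\ell/4$ breaks the Markov step. You need $m^K e^{K'a - M} \leq e^{-3m^{2/5}}$, and with $a = \ell^2/(C_4 m^{2/5})$ (the extremal case, which the paper takes WLOG) the term $K'a$ grows \emph{quadratically} in $\ell$ while $M = c\ell/4$ is linear. For any fixed absolute $C_4$ this makes $K'a - M$ positive once $\ell$ exceeds a fixed multiple of $m^{2/5}$; since the iteration in the proof of Theorem~\ref{biggun'} starts at $\ell_0 = \eps m \gg m^{2/5}$, the lemma is vacuous precisely in the regime where it is first used. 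The cure is to scale $M$ with $a$, not with $\ell$: the paper sets $M = (K+K'+1)a$ (after arranging $e^a\geq m$ so $m^K\leq e^{Ka}$), giving the Markov bound $e^{-a}\leq e^{-\ell}$, while the Gaussian-tail step still yields $e^{-c^2\ell^2/(2M)} = e^{-c^2 C_4 m^{2/5}/(2(K+K'+1))}$, small enough once $C_4$ is large.

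A second, lesser issue is your identity
\[
\sum_{z\in A_\zeta(t)\setminus A_\zeta(0)}\tilde H_\zeta(z) - \sum_{i\leq m^2\tau}\tilde H_\zeta(z_{m,i}) = -1 + O(m^{2/5}),
\]
which you justify by saying $A_\zeta(t)$ equals $\tilde\Omega_\zeta$ up to a boundary layer of width $O((a+\ell)/m)$ where $\tilde H_\zeta=O(1/m)$. Near $\zeta$ itself this fails: Lemma~\ref{harmoniclemma2-1}(c) only gives the $1/(md(\zeta,\cdot))$ bound for $md(\zeta,\cdot)>C_2$, and the unfilled region around a late $\zeta$ can be $\ell/m$ wide, where $\tilde H_\zeta$ is of order one. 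You also cannot control the gap between the occupied set and $\tilde\Omega_\zeta$ to order $O(m^{2/5})$ when $a\gg m^{2/5}$. The paper avoids all of this with a one-sided maximum-principle bound: since $\tilde H_\zeta\geq 0$, and boundary exits contribute $0$ (on $L[\zeta]$, no walk exits at $\zeta$), one has directly
\[
\tilde M_\zeta(T_1) \leq \sum_{z\in\tilde\Omega_\zeta\cap\frac{1}{m}\mathbb{Z}^2}\tilde H_\zeta(z) - \sum_{i=1}^{T_1}\tilde H_\zeta(z_{m,i}),
\]
with no need to analyse how close $A_\zeta(T_1)$ is to $\tilde\Omega_\zeta$, nor to condition on $\zeta$ being the \emph{first} late point (the paper just unions over $\zeta$). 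Applying Lemma~\ref{harmoniclemma2-2}(c) to the first $m^2\tau$ source terms and the uniform lower bound $\tilde H_\zeta(z_{m,i})\geq c/m$ to the remaining $\geq 2m\ell/V$ terms then gives $\tilde M_\zeta(T_1)\leq -2c\ell + C_2'm^{2/5}\leq -c\ell$.
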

\begin{proof}
	Without loss of generality, let $a=\ell^2/C_4m^{2/5}\geq\ell$. We can further suppose that $m\geq \max(3a+C_2,5a/\inf\nolimits_\zeta R_1)$. Indeed, otherwise we have $a=\eps m$ for a constant $\eps=\inf(1/4,\inf R_1/5)$; by Lemma \ref{levinepereslemma}, we know that we can choose $m$ large enough that
	\[\mathbb{P}(\mathcal{L}_{\ell/m}[m^2s]\cap\Eps_{a/m}[m^2s]^c)\leq \mathbb{P}(\mathcal{L}_{\ell/m}[m^2s])\leq C_0e^{-c_0m^2/\log m}\leq e^{-2m^{2/5}}.\]
	
	Fix $\zeta\in\frac{1}{m}\mathbb{Z}^2\cap((D_s)_{\ell/m}\setminus D_0)$ and set $T_1\leq m^2s$ minimal such that $\zeta\in(D_{T_1})_{\ell/m}$. Then we know that $d(\zeta,\partial D_{T_1})=\ell/m$---by Lemma \ref{heleshawdist}, this implies that
	\[
	T_1-m^2\tau\geq 2m^2\sqrt{1+\tau}(\sqrt{1+T_1/m^2}-\sqrt{1+\tau})\geq \frac{2m\ell}{V}.
	\]
	Let $L[\zeta]=\{\zeta\notin A_m(m^2T_1)\}$ be the event that $\zeta$ is $(\ell/m)$-late. Then
	\[\mathcal{L}_{\ell/m}[m^2s]=\bigcup\nolimits_\zeta L[\zeta].\]
	On the event $L[\zeta]$, we know that any particles in $A_\zeta(T_1)$ that hit the boundary must do so away from $\zeta$; that is, $\tilde{H}_\zeta\equiv 0$ for these particles. As in \cite{10.2307/23072157}, this implies that $\tilde{M}_\zeta(T_1)$ is maximized if the interior of $\tilde{\Omega}_\zeta\cap\frac{1}{m}\mathbb{Z}^2$ is fully occupied by $A_\zeta(T_1)$, so we can bound $\tilde{M}_\zeta(T_1)$ as follows:
	\begin{align*}
		\tilde{M}_\zeta(T_1)\leq \sum_{z\in \partial\tilde{\Omega}_\zeta\cap A_\zeta(T_1)}\left(0-H_\zeta(z_{m,i(z)})\right)+\sum_{z\in \ol{\Omega}_\zeta}\left(\tilde{H}_\zeta(z)-\tilde{H}_\zeta(z_{m,i(z)})\right),
	\end{align*}
	where $z_{m,i(z)}$ is the source point from which the particle landing at $z\in\ol{\Omega}_\zeta$ started, and weighting each term of the first sum by its number of occurrences in the multiset $A_\zeta(T_1)$. First, we reorganize the source terms of the two sums:
	\begin{align*}
		\tilde{M}_\zeta(T_1)&\leq -\sum_{i=m^2\tau+1}^{T_1}\tilde{H}_\zeta(z_{m,i})+\sum_{z\in \ol{\Omega}_\zeta}\tilde{H}_\zeta(z)-\sum_{i=1}^{m^2\tau}\tilde{H}_\zeta(z_{m,i})\\
		&\leq -(T_1-m^2\tau)\inf\nolimits_{i}\tilde{H}_\zeta(z_{m,i})+\sum_{z\in \ol{\Omega}_\zeta}\tilde{H}_\zeta(z)-\sum_{i=1}^{m^2\tau}\tilde{H}_\zeta(z_{m,i})\\
		&\leq -2m\ell V^{-1}\inf\nolimits_{i}\tilde{H}_\zeta(z_{m,i})+\sum_{z\in \ol{\Omega}_\zeta}\tilde{H}_\zeta(z)-\sum_{i=1}^{m^2\tau}\tilde{H}_\zeta(z_{m,i})\\
		&\leq -2c\ell+\sum_{z\in \ol{\Omega}_\zeta}\tilde{H}_\zeta(z)-\sum_{i=1}^{m^2\tau}\tilde{H}_\zeta(z_{m,i})
	\end{align*}
	for a constant $c>0$ depending only on the flow, using Lemmas \ref{harmoniclemma2-2}(a,b) to deduce that $\inf\nolimits_{i}\tilde{H}_\zeta(z_{m,i})=\Theta(m^{-1})$. Next, notice that the two right-hand sums are the same that appear in Lemma \ref{harmoniclemma2-2}(c), implying that
	\begin{align*}
		\tilde{M}_\zeta(T_1)\leq -2c\ell+C_2'm^{2/5}\leq -c\ell,
	\end{align*}
	so long as $C_4\geq C_2'/c$.
	
	Choose $m$ large enough that $e^{a}\geq e^{C_4m^{2/5}}\geq m$. By Lemma \ref{outsideest2},
	\[\mathbb{E}\left[e^{\tilde{S}_\zeta(m^2T_1)}\mathbf{1}_{\Eps_{(a+1)/m}(m^2T_1)^c}\right]\leq m^Ke^{K'a}\leq e^{(K+K')a}.\]
	Let $M=(K+K'+1)a$, so Markov's inequality implies
	\[\mathbb{P}\left(\{\tilde{S}_\zeta(m^2T_1)>M\}\cap\Eps_{a/m}[m^2T_1]^c\right)\leq e^{-M}\mathbb{E}\left[e^{\tilde{S}_\zeta(m^2T_1)}\mathbf{1}_{\Eps_{(a+1)/m}(m^2T_1)^c}\right]\leq e^{-a}.\]
	Since $\tilde{M}_\zeta(m^2T_1)\leq -c\ell$ on the event $L[\zeta]$, this means that
	\begin{align*}
		\mathbb{P}\left(\Eps_{a/m}[m^2T_1]^c\cap L[\zeta]\right)&\leq \mathbb{P}\left(\{\tilde{S}_\zeta(m^2T_1)>M\}\cap\Eps_{a/m}[m^2T_1]^c\right) \\
		&\qquad+ \mathbb{P}\left(\{\tilde{S}_\zeta(m^2T_1)\leq M\}\cap L[\zeta]\right)\\
		&\leq \mathbb{P}\left(\{\tilde{S}_\zeta(m^2T_1)>M\}\cap\Eps_{a/m}[m^2T_1]^c\right) \\
		&\qquad+ \mathbb{P}\left\{\tilde{S}_\zeta(m^2T_1)\leq M,\tilde{M}_\zeta(m^2T_1)\leq -c\ell\right\}\\
		&\leq e^{-a}+ e^{-c^2\ell^2/2M}\\
		&=e^{-a}+ e^{-c^2C_4m^{2/5}/2(K+K'+1)}\leq 2e^{-4m^{2/5}}\leq e^{-3m^{2/5}}
	\end{align*}
	for $C_4\geq 4(K+K'+1)/c^2$, using Lemma \ref{escapelemma2} and the fact that $\tilde{M}_\zeta(t)=\tilde{B}_\zeta(\tilde{S}_\zeta(t))$ for a centered Brownian motion $\tilde{B}_\zeta$. We conclude that
	\begin{align*}
		\mathbb{P}\left(\mathcal{L}_{\ell/m}[m^2s]\cap\Eps_{a/m}[m^2s]^c\right)&\leq \sum_{\zeta\in\frac{1}{m}\mathbb{Z}^2\cap((D_s)_{\ell/m}\setminus D_0)}\mathbb{P}\left(\Eps_{a/m}[m^2T_1]^c\cap L[\zeta]\right)\\
		&\leq 2\op{Vol}(D_s)m^2 e^{-3m^{2/5}}\leq e^{-2m^{2/5}}.
	\end{align*}
\end{proof}

\section{Proof of Theorem \ref{biggun'}}
Choose $m$ large, $\eps\leq \alpha/4$, and $s\in[0,T]$. From Lemma \ref{levinepereslemma}, we know
\[\mathbb{P}(\mathcal{L}_\eps[m^2s])\leq C_0e^{-c_0m^2/\log m}\leq e^{-2m^{2/5}}.\]
Set $\ell_0=\eps m$, and define values $a_k,b_k$ as follows:
\[a_k=\alpha^{-1}\ell_k,\qquad \ell_k=\sqrt{C_4m^{2/5}a_{k-1}},\]
where $\alpha,C_4>0$ are as in Sections \ref{firstestsection} and \ref{secondestsection}, respectively. Now, if $\ell_k\geq \alpha^{-1}C_4m^{2/5}$, we know that
\[a_{k-1}=\frac{\ell_k^2}{C_4m^{2/5}}\geq \alpha^{-2}C_4m^{2/5}\]
and thus that
\[\ell_{k-1}=\alpha a_{k-1}\geq\alpha^{-1}C_4m^{2/5}.\]
Thus, if $\ell_k\geq \alpha^{-1}C_4m^{2/5}$, we know that $\ell_n\geq C_4m^{2/5}$ and that $a_n\geq C_3m^{2/5}$ for all $n\leq k$, assuming without loss of generality that $\alpha^{-2}C_4\geq C_3$. We also know (from the choice of $\eps$) that $a_0\leq m/4$; in general, if $a_k\leq m/4$, then
\[a_{k+1}=\alpha^{-1}\sqrt{C_4m^{2/5}a_k}\leq \frac{1}{2}\alpha^{-1}\sqrt{C_4m^{7/5}}\leq m/4\]
for large enough $m$. Then the pair $(\ell_n,a_n)$ satisfies the hypothesis of Lemma \ref{firstest}, and similarly for $(\ell_n,a_{n-1})$ and Lemma \ref{secondest}. By induction, this implies
\[\mathbb{P}(\mathcal{E}_{a_k/m}[m^2s])\leq \mathbb{P}(\mathcal{L}_{\ell_k/m}[m^2s]^c\cap \mathcal{E}_{a_{k}/m}[m^2s]) + \mathbb{P}(\mathcal{L}_{\ell_k/m}[m^2s])\leq (2k+2)e^{-2m^{2/5}}\]
and
\[\mathbb{P}(\mathcal{L}_{\ell_k/m}[m^2s])\leq \mathbb{P}(\mathcal{L}_{\ell_k/m}[m^2s]\cap \mathcal{E}_{a_{k-1}/m}[m^2s]^c) + \mathbb{P}(\mathcal{E}_{a_{k-1}/m}[m^2s])\leq (2k+1)e^{-2m^{2/5}},\]
so long as $\ell_k\geq \alpha^{-1}C_4m^{2/5}$.

Now, set $A=\alpha^{-1}C_4m^{2/5}$, so that $\ell_k=A^{1-2^{-k}}\ell_0^{2^{-k}}$ and $m_k=A^{1-2^{-k}}m_0^{2^{-k}}$. With this formula, we see that the first time $\ell_k<2A$ occurs is when
\[k=\lceil \log_2\log_2(A^{-1}\ell_0)\rceil\leq c\log\log m,\]
for some $c>0$ independent of $m$. Fix $k'=\lceil \log_2\log_2(A^{-1}\ell_0)\rceil$; iterating $k'$ times, the above calculation shows that
\[\mathbb{P}(\mathcal{L}_{\alpha^{-1}C_4m^{-3/5}}[m^2s])\leq\mathbb{P}(\mathcal{L}_{\ell_{k'}/m}[m^2s])\leq (2k'+1)e^{-2m^{2/5}}\leq \frac{1}{2}e^{-m^{2/5}}\]
and
\[\mathbb{P}(\mathcal{E}_{\alpha^{-2}C_4m^{-3/5}}[m^2s])\leq\mathbb{P}(\mathcal{E}_{a_{k'}/m}[m^2s])\leq (2k'+2)e^{-2m^{2/5}}\leq \frac{1}{2}e^{-m^{2/5}}.\]
Set $C_5=\alpha^{-2}C_4$. Putting these bounds together, we get
\begin{align*}
	\mathbb{P}\bigg\{(D_{\tau})_{C_5m^{-3/5}}\cap\frac{1}{m}\mathbb{Z}^2&\subset A_m(m^2\tau)\subset (D_{\tau})^{C_5m^{-3/5}}\;\text{for all}\;\tau\in[0,s]\bigg\}^c\\
	&\leq \mathbb{P}(\mathcal{L}_{C_5m^{-3/5}}[m^2s])+\mathbb{P}(\mathcal{E}_{C_5m^{-3/5}}[m^2s])\leq e^{-m^{2/5}}.
	\tag*{\qed}
\end{align*}

	\section{Concluding Remarks}\label{conclusion}
There are a number of possible improvements to the results proven here. Most importantly, it would be interesting to improve the $m^{-3/5}$ bounds on the fluctuations; we expect that fluctuations are truly of order $m^{-1}\log m$, as in the point-source case. Hypothetically, this result could be proven using our technique---the primary obstacle is that we need a stronger version of Lemma \ref{harmoniclemma2}(c), which quantifies how closely $\tilde{H}_\zeta$ approximates a continuum harmonic function. In general, if we can replace the $m^{2/5}$ in \ref{harmoniclemma2}(c) with $m^\eps$ [resp., $\log m$], we could derive bounds on the fluctuations of order $m^{-1+\eps}$ [resp., $m^{-1}\log m$]. On the flip side, this also means that we could significantly weaken both \ref{harmoniclemma}(c) and \ref{harmoniclemma2}(c) and still prove a non-trivial convergence rate of IDLA.

Furthermore, it would be interesting to lift some of the hypotheses we set on the flow. However, we imagine that it is less likely our technique would apply without the requirements of a concentrated mass distribution or a smooth flow. Indeed, both hypotheses are necessary to guarantee that $R_0$ is bounded away from 0, and thus that $H_\zeta$ is small enough on the boundary. However, if an independent bound on $\tilde{H}_\zeta$ could be obtained, showing that it satisfies $\tilde{H}_\zeta(z)\leq\frac{1}{md(z,\zeta)-C_2}$ without comparing it to $H_\zeta$, it could be used in place of $H_\zeta$ for both parts of the proof.

There are also closely related settings that have not been studied extensively. An interesting example would be to replace the ``solid'' initial sets $Q^s_i$ with submanifolds of $D_0$. Since these would be zero volume, they could eject particles evenly from all points rather than having a moving source $\bigsqcup_{i}\partial Q^{s_i}_i$. Another example would be a collection of point sources; in fact, the theorem corresponding to Lemma \ref{levinepereslemma} in this setting has already been proved by Levine and Peres \cite[Theorem 1.4]{Levine_2010}, so it would likely not be too difficult to adapt our argument to this case.

Finally, a question we will investigate in the sequel is that of the scaling limits of the fluctuations themselves. Jerison, Sheffield, and Levine \cite{jerison2014} studied this question for same-time fluctuations in the point-source case, and they found that, when the fluctuations are scaled up by a factor of $m^{d/2}$ (in dimension $d$), they have a weak limit in law of a certain Gaussian random distribution. They found a similar result in the case of a discrete cylinder $\mathbb{Z}\times\mathbb{Z}/m\mathbb{Z}$ with source points along a fixed-height circumference \cite{jerison2013internal}; here, they further studied the correlations between fluctuations at different times in the flow. The same question has been studied by Eli Sadovnik \cite{eli16} in the extended-source case, focusing on same-time fluctuations and using harmonic polynomials as test functions; we are interested in strengthening his result to allow smooth test functions and to investigate correlations between fluctuations at different times.
	
	\paragraph{Acknowledgments.} I would like to thank Professor David Jerison and the MIT UROP+ program (organized by Slava Gerovitch) for making this project possible. I would like to especially thank David Jerison and Pu Yu (MIT Department of Mathematics) for their mentorship throughout. This research was supported in part by NSF Grant DMS 1500771.
	
	\bibliographystyle{alpha}
	\bibliography{../Bibliography}
	
\end{document}